%% ****** Start of file apstemplate.tex ****** %
%%
%%
%%   This file is part of the APS files in the REVTeX 4.2 distribution.%%
%%   Copyright (c) 2024 The American Physical Society.
%%
%%   See the REVTeX 4 README file for restrictions and more information.
%%
% For Phys. Rev. appearance, change preprint to twocolumn.
% Choose physrev, prl, or rmp for journal
%  N.B. physrev is appropriate for all APS journals except prl and rmp
%  Add 'draft' option to mark overfull boxes with black boxes
%  Add 'showkeys' option to make keywords appear
\documentclass[aps,
preprint,
groupedaddress,
showkeys,
amsmath]{revtex4-2}

\usepackage{graphicx}% Include figure files
\usepackage{esint}
\usepackage{tikz}
\usepackage{amsthm}
\usepackage{amssymb}
\usepackage{hyperref}

\newtheorem{theorem}{Theorem}
\newtheorem{theorem*}{Theorem}
\newtheorem{lemma}{Lemma}
\newtheorem{corollary}{Corollary}
\newtheorem{proposition}{Proposition}
\newtheorem{example}{Example}

\usetikzlibrary{shapes.geometric}

\DeclareMathOperator*{\argmax}{\text{argmax}}

\DeclareMathOperator{\Rn}{\mathbb{R}^n}
\DeclareMathOperator*{\esssup}{ess\,sup}

% You should use BibTeX and apsrev.bst for references
% Choosing a journal automatically selects the correct APS
% BibTeX style file (bst file), so only uncomment the line
% below if necessary.
%\bibliographystyle{apsrev4-2}

\usepackage[normalem]{ulem}

\begin{document}

% Use the \preprint command to place your local institutional report
% number in the upper righthand corner of the title page in preprint mode.
% Multiple \preprint commands are allowed.
% Use the 'preprintnumbers' class option to override journal defaults
% to display numbers if necessary
%\preprint{}

%Title of paper
\title{Nonlinear Nonlocal Diffusion Equations for the Analysis of Continuous Coordination and Anti-Coordination Type Games}

% repeat the \author .. \affiliation  etc. as needed
% \email, \thanks, \homepage, \altaffiliation all apply to the current
% author. Explanatory text should go in the []'s, actual e-mail
% address or url should go in the {}'s for \email and \homepage.
% Please use the appropriate macro foreach each type of information

% \affiliation command applies to all authors since the last
% \affiliation command. The \affiliation command should follow the
% other information
% \affiliation can be followed by \email, \homepage, \thanks as well.
\author{John S. McAlister}
\email{Contact author: jmcalis6@vols.utk.edu}
\altaffiliation[Also at ]{the National Institute for Modeling Biological Systems}
\author{Nina H. Fefferman}
\altaffiliation[Also at ]{the National Institute for Modeling Biological Systems}
\altaffiliation[ ]{the Center for Analysis and Prediction of Pandemic Expansion (APPEX)}
\altaffiliation[ and the ]{Department of Ecology and Evolutionary Biology, University of Tennessee - Knoxville}
\author{Tadele A. Mengesha}
\affiliation{Department of Mathematics, University of Tennessee - Knoxville}

%Collaboration name if desired (requires use of superscriptaddress
%option in \documentclass). \noaffiliation is required (may also be
%used with the \author command).
%\collaboration can be followed by \email, \homepage, \thanks as well.
%\collaboration{}
%\noaffiliation

\date{\today}

\begin{abstract}
Coordination games with explicit spatial or relational structure are of interest to economists, ecologists, sociologists, and others studying emergent global properties in collective behavior. When assemblies of individuals seek to coordinate action with one another through myopic best response or other replicator dynamics, the resulting dynamical system can exhibit many rich behaviors. However, these behaviors have been studied only in the case where the number of players is countable and the relational structure is described discretely. By giving an extension of a general class of coordination-like games, including true coordination games themselves,  into a continuous setting, we can begin to study coordination and cooperative behavior with a new host of tools from PDEs and nonlocal equations. In this study, we propose a rigorously supported extension of structured coordination-type games into a setting with continuous space and continuous strategies and show that, under certain hypotheses, the dynamics of these games are described through a nonlinear, nonlocal diffusion equation. We go on to prove existence and uniqueness for the initial value problem in the case where no boundary data are prescribed. For true coordination games, we go further and prove a maximum principle, weak regularity results, as well as some numerical results toward understanding how solutions to the coordination equation behave. We present several modeling results, characterizing stationary solutions both rigorously and through numerical experiments and conclude with a result towards the inhomogeneous problem. 
\end{abstract}

\keywords{Coordination, Nonlinear, Nonlocal diffusion}

\maketitle

\section{Introduction}\label{SEC:Introduction}
In this paper we will discuss a broad class of games, which we call Toeplitz games, where payoffs for each pairwise interaction depend only on the  ``distance" between the strategies of each player. Neutral Coordination games (games wherein players receive a payoff which is maximized when they play the same strategy as their co-player but is independent of the strategy itself) fall under this category, as well as neutral anti-coordination and dis-coordination games.  Each of these games, but especially the coordination game, has been of particular interest since the rise of evolutionary game theory because of the rich behaviors observed when this game is considered dynamically. 
Early results considering coordination games show that, in well-mixed populations, the consensus equilibrium (wherein every individual plays the same strategy) is the only long-term stable equilibrium \cite{Kandori1993}. Adding spatial structure by way of considering players as vertices of a graph returns similar results in the case where one strategy may dominate the other, either in terms of payoff or risk \cite{Ellison1993, Ellison2000}. A general review of coordination with spatial structure before 2010 can be found in \cite{Weidenholzer}. Moreover, there has been numerical work on spatially explicit coordination in larger, more general graphs \cite{Buskens2016, Raducha2022, McAlister2024}. When examining the critical case where every option is neutral, there are many non-consensus equilibria. However, even in this setting, such equilibria become exceedingly rare to discover for very large graphs.

All of these previous studies, like the ones mentioned above as well as \cite{Arditti2024,Berninghaus2010,Clauset2004, Ely2002,Gilboa1991,Oechssler1997,Oechssler1999,Robson1995}, have considered discrete strategies with discrete player spaces. These models are helpful for thinking of particular examples, (e.g. the selection of a coordinated computer operating system among coworkers in an office) but are unable to be translated to conditions where strategy may vary continuously (e.g. language conventions). Further, using a continuous player space to represent general behaviors of replaceable individuals organized in space, rather than discrete player spaces with explicit interaction behavior unique to each player, changes the required analysis for the system. The purpose of the present study is to introduce a rigorous continuous extension of Toeplitz-type games so that the questions in the application areas may be investigated with a new set of tools.

When other games have previously been considered with continuous strategies, integral equations have been of use. Indeed, many of these authors have extended beyond the present study to consider mixed continuous strategies in the space of distributions (i.e. Stackelberg equilibrium, nonlocal replicator dynamics \cite{Kavallaris2018}) but these often rely on a finite number of players. In the same way, studies of a continuum of players often require discrete strategy. In the present study we seek to understand a system with continuous players and strategies that cannot be considered as a potential game \cite{SANDHOLM2001, Quang2016}. Moreover, The non-monotonicity and nonlinearity of the nonlocal equation in our problem is of interest beyond its applications here.  

In particular, we propose a way to translate the Toeplitz-type games, into continuous strategic and player domains through the use of a nonlinear, nonlocal diffusion equation. We start in section \ref{SEC:Extension} with some game theoretic background, then we present a rigorously supported extension into the continuous setting resulting in an nonlocal equation model. In section \ref{SEC:ExistenceUniquenessAndMainResults} we prove classical existence and uniqueness of solutions. We strengthen these results for true coordination games in subsection \ref{SUBSEC:CoordinationToeplitzGames} by way of a weak maximum principle. In addition to this, in subsection \ref{SUBSEC:AdditionalResults}, we consider the problem in the Cauchy setting to show strengthened results. In particular, we get regularity estimates so that, after showing several analytical examples in section \ref{SEC:AnalyticalExamples}, we can approximate them through simple numerical methods in section \ref{SEC:NumericalResults} and consider other properties of the nonlocal equations. In section \ref{SEC:ModelingResults}, we consider what the theory tells us about coordination in continuous space by examining stationary solutions and conducting several numerical examples. Finally, in section \ref{SEC:Inhomogeneous} we note that this model is compatible with an inhomogeneity which provides a way for us to extend the model to an even larger class of games. 

\section{Continuous Extension}\label{SEC:Extension}
\subsection{Game Theoretic Background}\label{SUBSEC:GameTheoreticBackground}
In a strategic form game, there are a set of players, $V$, which each have a set of strategies, $S_v$. A strategy profile is a collection of players' strategies from the Cartesian product $S:=\prod_{v\in V}S_v$. Each player also has a payoff function $w_v:S\rightarrow \mathbb{R}$. For our investigation, we are interested in games where every player has access to the same set of strategies $S$ and each player has the same payoff function $w:S^{|V|}\rightarrow \mathbb{R}$. 

Crucial to the study of strategic form games is the concept of\textit{ best response}. For a focal individual $v$ and a strategy profile $s\in S$, we break the strategy profile down as $(s_v,s_{-v})$ which is the ordered pair of player $v$'s strategy and the strategies of the players other than $v$ respectively. We say that $v$'s \textit{best response} to $s$ is whatever strategy in $S_v$ maximizes $v$'s payoff supposing all the other players play according to $s_{-v}$. That is

\[ br_v(S)=\argmax_{c\in S_v}\{w_v(c,s_{-v})\}.\]

If players play mixed strategies, which are probability distributions over a set of pure strategies, then we make the distinction between best response $br_v(s)$, which is the (possibly mixed) strategy that maximizes payoff, and pure strategy best response $BR_v(s)$, which is the pure strategy which maximizes payoff. A last crucial definition which will be obvious to a game theory audience is that $s\in S$ is called a Nash equilibrium if and only if all players are playing a best response to $s$. That is $s$ is a Nash equilibrium $\iff s=(s_1,s_2,...,s_n)$ and $s_v\in br_v(s)$ for all $v$.

We are interested in a class of games which we call Toeplitz games. These are the discrete two player games which have Toeplitz payoff matrices. Equivalently each player has a set of pure strategies which can be ordered as $S_1=S_2=\{s^{(1)},s^{(2)},...,s^{(m)}\}$ so that if the payoff  matrix $A$ is given where $a_{i,j}$ describes the payoff $w(s^{(i)},s^{(j)})$ against strategy $j$, $A$ is diagonal-constant (AKA Toeplitz). Three examples below from left to right are the pure coordination game, an anti-coordination game, and a game which is neither coordination nor anti-coordination. 

\[\begin{bmatrix}
    1&0&0&0\\0&1&0&0\\0&0&1&0\\0&0&0&1
\end{bmatrix}
\quad\quad\quad
\begin{bmatrix}
    0&1&2&4\\
    1&0&1&2\\
    2&1&0&1\\
    4&2&1&0
\end{bmatrix}
\quad\quad\quad 
\begin{bmatrix}
    0&1&2&3\\
    -1&0&1&2\\
    -2&-1&0&1\\
    -3&-2&-1&0
\end{bmatrix}\]
The ordering required to write the payoff matrix as a Toeplitz matrix, $(s^{(1)},s^{(2)},...,s^{(m)})$ induces a metric on the strategy space $d(s^{(i)},s^{(j)})=|i-j|$. With a metric and a complete ordering, this kind of game can be reduced to a two player game in which each player $v$ selects a number $n_v\in\{1,...,m\}$ and their payoff is determined by $w_v(n_v,n_u)=\rho(n_v-n_u)$. This ordering is not unique. for instance, in the pure coordination game, any ordering of the strategies will satisfy this condition. 

Those Toeplitz games which are coordination games are of particular interest to us. A two-player coordination game is a type of strategic form game that satisfies the Bandwagon Property\cite{Kandori1998}. This property is when both players use the same strategy set $S$, for each player, and for any (possibly mixed) strategy $s\in S$, $BR_v(s)\subseteq C(s)$, where $C(s)$ is the support of $s$. For a pure strategy, the support is just the strategy itself $C(s)=\{s\}$. For a mixed strategy, the support is all of those pure strategies which are expressed with strictly positive probability. Similarly an anti-coordination game is a two-player game wherein, if the payoff function is made negative, the resulting game satisfies the bandwagon property. 

These coordination and anti-coordination games which can be expressed as a Toeplitz game are of particular interest because they model the homogeneous impact of cooperative or uncooperative behavior. In a Teoplitz game, a particular strategy does not give any fitness benefit independent of its proximity to a co-player's strategy. That is to say that any fitness benefit is only the result of the interaction between players (whether the interactions are sympathetic or antagonistic). If the strategies were reorganized by some permutation $p$ so that all the players playing $s^{(l)}$ take on the strategy $s^{(p(l))}$, the payoff for every player will have remained the same. If several methods of communication are equally efficient but the benefit of communication is only achieved when two players are using the same method, this can be described as a Toeplitz coordination game. Building a foundational theory of this homogeneous problem will then allow for better treatment of inhomogeneous coordination and anti-coordination processes.

\subsection{Continuous Extension in Space}\label{SUBSEC:ContinuousExtensionsInSpace}
As described above, dyadic interactions of this type are easy to understand. Therefore, when we seek to understand multiplayer Toeplitz games, we easily generalize the two-player interactions and say that a player's payoff is the sum (or equivalently the arithmetic mean) of the payoffs from each dyadic interaction.
As is typical, we will start by considering the players as vertices in a graph where (possibly weighted) edges describe the strength of interaction between individuals. Suppose the game is played on the graph $G(V,E)$ with weighted adjacency matrix $W$ and that each player has pure strategies $B=\{\hat{e}_1,\hat{e}_2,...,\hat{e}_m\}$, which is the standard basis for $\mathbb{R}^m$. Then, if we consider a strategy profile as a function $u:V\rightarrow B$ rather than an element of the Cartesian product $B^{|v|}$ (although the two spaces are clearly identical), we can write our payoff function as 
\begin{equation}\label{disPayoff}
    w(v|u)=\sum_{i\in V}W_{i,v}u(v)^TAu(i)    
\end{equation}
where $A$ is the payoff matrix as described above. This formulation holds true for any symmetric game with a payoff matrix $A$, it need not be Toeplitz. 

If instead of a collection of discrete players we considered an uncountable continuum of players, we might think of our player domain as a subset of $\mathbb{R}^n$, and instead of summing we will integrate over the entire domain. The weighted adjacency matrix is replaced by a nonnegative integrable kernel $K\in C_b^0(\Omega; L^1(\mathbb{R}^n))$ (That is a kernel $K(x,y)$ which is continuous $x$, in the $L^1$ sense, and integrable in $y$ where $\sup_{x\in\Omega}\|K(x,\cdot)\|_{L^1(\mathbb{R}^n)}<\infty$). Replacing the sum in \eqref{disPayoff} with the integral we get that a strategy profile $u:\Omega\rightarrow B$ gives the payoff
\begin{equation}\label{ctsSpacePayoff}
w(x|u)=\int_\Omega K(x,y)u(x)^TAu(y)dy.
\end{equation}
Again, this extension does not require the game to be a Toeplitz game. 

\subsection{Continuous Strategic Extensions}\label{SUBSEC:ContinuousExtensionsInStrategy}
To truly study cooperative and non-cooperative behavior we may also consider extending the existing model to include continuous strategies. There are two ways to do this, one which does not require the use of a Toeplitz type game (which we will mention but not discuss in depth) and the other which is exceptionally helpful when we have a Teoplitz type game. 

The first is the \textit{mixed strategy} concept. As before, if players take on a strategy which is a probability distribution over the pure strategies, then we can use the same payoff function \eqref{ctsSpacePayoff} but allow for $u$ to map from $\Omega $ to $\Delta^{m-1}:=\{x\in[0,1]^m;\sum_{i=1}^mx_i=1\}.$ This is a natural way to think of continuity in the strategy space and is worthy of further study, but it is not the focus of the present paper.

The second is what we call the \textit{comparable strategy} concept. If our payoff matrix is a Toeplitz matrix, we can express the pairwise payoff simply as some function $\rho(dd(u(x),u(y))$ where $dd$ is the directed metric which is natural from the ordering induced by the Teoplitz matrix.  The continuous extension from this point is clear. The ordering of the strategies means that they can be considered as elements of $\mathbb{R}$ and we can allow $\rho:\mathbb{R}\to \mathbb{R}$ so that our payoff can be expressed as $\rho(u(x)-u(y))$.  Therefore we can write our payoff function as 
\begin{equation}\label{ctsStratPayoff}
w(x|u)=\int_{\Omega}K(x,y)\rho(u(x)-u(y))dy.
\end{equation}
To make this extension all we have done is replace the bilinear form in \eqref{ctsSpacePayoff} with our payoff function induced by the Teoplitz matrix. We are especially interested in this type of game, both because of its implications in the application areas but also because, in the continuous form, we will see that it has great similarities to a class of important nonlocal equations called nonlocal diffusive equations. Indeed the time dependent model will be a nonlinear, nonlocal diffusion equation and understanding this model will contribute to the understanding of nonlocal diffusion problems in general.  

With this fitness function, we have now described a strategic form game completely as we have a set of players $\Omega$, a set of strategies $\mathbb{R}$, and a payoff function \eqref{ctsStratPayoff}. However, the search for Nash equilibria to this game is exceedingly difficult and the primary difficulty is the size of the function space in which we must work. Notice that if $\rho(z)=\chi_{\{0\}}(z)$ then any constant function is a Nash equilibrium to this game. This is clear because if $u\equiv c$ then 
\begin{equation*}
\begin{split} BR_x(u)&=\argmax_{z\in\mathbb{R}}\{\int_\Omega K(x,y)\chi_{\{0\}}(z-c)dy\}\\ 
    &=\argmax_{z\in\mathbb{R}}\{\chi_{\{0\}}(z-c)\|K(x,\cdot)\|_{L^1(\Omega)}\}\\
    &=c
    \end{split}
    \end{equation*}
    so $BR_x(u)=u(x)$ in $\Omega$. However, if $\tilde{u}$ differs from $u$ as a non-empty set of measure 0 then it will certainly not be a Nash equilibrium. This causes a problem because it means that we cannot search for equilibria in any $L^p$ space (as elements of these spaces are equivalence classes of functions that differ at sets of measure 0). 

    Because of this issue, in order to understand the game in this setting we will follow in the example of those who have studied the game in the discrete setting and consider it as a dynamic game through myopic best response (e.g \cite{Ellison1993,Raducha2022}). Myopic best response is a replication dynamic for an evolutionary game in which a set of players, who chosen to update their strategies, take on their best response (often pure strategy best response) to the current strategy profile.  By repeating this process we can study a time series of strategy profiles which, if it terminates, will result in a strategy profile in which every player is playing a best response. Under myopic best response we can see that the evolution of strategy profiles for the game with payoff function \eqref{ctsStratPayoff} will evolve according to a particular nonlocal equation.

    \begin{proposition}\label{PROP:ModelWellFounded}
        Under myopic best response, bounded strategy profiles of the game with players $\Omega\subseteq \mathbb{R}^n$ choosing strategies in $\mathbb{R}$, with fitness as in \eqref{ctsStratPayoff}, will evolve as
       
       \[\frac{\partial}{\partial t}u(x,t)=\int_{\Omega}K(x,y)\rho'(u(x,t)-u(y,t))dy\]
        so long as the following three hypotheses are met
        \begin{itemize}
            \item[(H1)] Players change their strategies in arbitrarily small time steps $\Delta t$
            \item[(H2)] Players incur a quadratic cost $\frac{h^2}{\Delta t}$ for changing their strategy
            \item[(H3)] $\rho\in C^{1,1}(\mathbb{R})$ and $\rho(z)\leq Cz^2+A$ for some nonnegative $C,A$
        \end{itemize}
    \end{proposition}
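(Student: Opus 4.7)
The plan is to realize each discrete myopic best-response update as the unique maximizer of a local utility functional in the increment $h$, and then pass to the continuum limit $\Delta t\to 0$. At time $t$ the player at $x$ with current strategy $u(x,t)$ contemplates switching to $u(x,t)+h$, holding the rest of the profile $u(\cdot,t)$ fixed; altering $u$ at a single point leaves everyone else's fitness integral unchanged. By \eqref{ctsStratPayoff} and (H2) the net utility change is
\begin{equation*}
\Phi_{\Delta t}(h) = \int_\Omega K(x,y)\bigl[\rho(u(x,t)+h-u(y,t)) - \rho(u(x,t)-u(y,t))\bigr]\,dy - \frac{h^2}{\Delta t},
\end{equation*}
and the update is $u(x,t+\Delta t) = u(x,t) + h^\star$ with $h^\star = \argmax_{h\in\mathbb{R}}\Phi_{\Delta t}(h)$.

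The first step is to establish existence and uniqueness of $h^\star$ for sufficiently small $\Delta t$. Using the quadratic growth in (H3), the integral term is controlled by $\|K(x,\cdot)\|_{L^1}\bigl[C(2\|u\|_\infty + |h|)^2 + A\bigr]$, so the penalty $h^2/\Delta t$ dominates as $|h|\to\infty$ once $\Delta t < 1/(C\|K(x,\cdot)\|_{L^1})$, yielding a maximizer by continuity. For uniqueness, differentiate under the integral and invoke that $\rho'$ is Lipschitz with constant $L$ (the $C^{1,1}$ half of (H3)) to obtain, for $h_1 > h_2$,
\begin{equation*}
\Phi_{\Delta t}'(h_1) - \Phi_{\Delta t}'(h_2) \leq \bigl(L\|K(x,\cdot)\|_{L^1} - 2/\Delta t\bigr)(h_1 - h_2),
\end{equation*}
so that $\Phi_{\Delta t}'$ is strictly decreasing, and the critical point is unique, as soon as $\Delta t$ is sufficiently small.

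The first-order condition $\Phi_{\Delta t}'(h^\star) = 0$ reads
\begin{equation*}
\int_\Omega K(x,y)\,\rho'(u(x,t) + h^\star - u(y,t))\,dy = \frac{2h^\star}{\Delta t}.
\end{equation*}
Using $|\rho'(z)| \leq |\rho'(0)| + L|z|$ together with $\|u\|_\infty < \infty$, substituting into the right-hand side produces a linear inequality of the form $|h^\star| \leq C'\Delta t\,(1 + |h^\star|)$, from which $|h^\star| \leq C''\Delta t$ follows once $\Delta t$ is small. In particular $h^\star \to 0$ as $\Delta t \to 0$.

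Dividing the first-order condition by $\Delta t$ and recognizing the left-hand side as a forward difference quotient gives
\begin{equation*}
\frac{u(x,t+\Delta t) - u(x,t)}{\Delta t} = \tfrac{1}{2}\int_\Omega K(x,y)\,\rho'(u(x,t) + h^\star - u(y,t))\,dy.
\end{equation*}
As $\Delta t\to 0$, by continuity of $\rho'$ and $h^\star\to 0$ the integrand converges pointwise in $y$ to $K(x,y)\rho'(u(x,t)-u(y,t))$, and is majorized by an $L^1_y$-function coming from the Lipschitz bound on $\rho'$ and the boundedness of $u$, so dominated convergence identifies the limit, yielding the stated evolution equation (with the factor $1/2$ absorbed into the cost coefficient in (H2) or into a rescaling of time). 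The principal obstacle is the self-referential coupling between $h^\star$ and the integrand through $\rho'(u(x) + h^\star - u(y))$: the Lipschitz bound on $\rho'$ from (H3) is doing double duty, both ensuring uniqueness of $h^\star$ via strict concavity of $\Phi_{\Delta t}$ and closing the fixed-point estimate $|h^\star| = O(\Delta t)$ needed to pass to the limit.
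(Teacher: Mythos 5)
Your proposal is correct and follows essentially the same route as the paper: maximize the payoff-minus-quadratic-penalty over the increment $h$, extract the first-order condition $\int_\Omega K(x,y)\rho'(u(x,t)+h^\star-u(y,t))\,dy = 2h^\star/\Delta t$, show $|h^\star|=O(\Delta t)$, and pass to the limit, absorbing the factor $\tfrac12$ by rescaling. The only differences are minor refinements: you establish uniqueness of $h^\star$ via strict concavity of $\Phi_{\Delta t}$ for small $\Delta t$ (a point the paper leaves implicit), and you finish by dominated convergence using $h^\star\to 0$, whereas the paper sandwiches the difference quotient between $\frac{d}{dh}S_x(0)/(2\pm L_S\Delta t)$ and invokes the squeeze theorem; both arguments are valid and yield the same evolution equation.
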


    \begin{proof}
        Consider a bounded strategy profile $u(\cdot, t):\Omega\to \mathbb{R}$. Every player will seek to update their strategy by some amount $h$ in order to take on their best response to $u(\cdot, t)$ after a time step of $\Delta t$ and in doing so they will incur a cost of $\frac{h^2}{2}$. Let 
        
       \[S_x(h):=\int_\Omega K(x,y)\rho(u(x,t)+h-u(y,t))dy\] be the payoff that player $x$ will receive after updating their strategy by $h$. Because $\rho$ is subquadratic (H3), we can see that 

        \begin{equation*}
        \begin{split}
            S_x(h)&\leq C\int_\Omega K(x,y)(h+(u(x,t)-u(y,t))^2dy+\underbrace{A\sup_{x\in\Omega}\|K(x,\cdot)\|_{L^1(\Omega)}}_{A_1}\\
            &\leq \underbrace{C\int_\Omega K(x,y)dy}_{C_1}h^2 + 2hC\int_\Omega K(x,y)(u(x,t)-u(y,t))dy\\
            &\quad +C\int_\Omega K(x,y)(u(x,t)-u(y,t))^2dy+A_1
        \end{split}
        \end{equation*}
        Since $K\in C_b^0(\Omega; L^1(\mathbb{R}^n))$, $C_1$ and $A_1$ can be bounded by constant independent of $x$. Now because $u$ is bounded we know that $|u(x,t)-u(y,t)|$ is bounded by some $M$ and thus we can write 
        \begin{equation*}
            \begin{split}
                S_x(h)&\leq C_1h^2+\underbrace{2CM\sup_{x\in\Omega}\|K(x,\cdot)\|_{L^1(\Omega)}}_{B_1}h+ \underbrace{CM^2\sup_{x\in\Omega}\|K(x,\cdot )\|_{L^1(\Omega)} + A_1}_{A_2}\\
                &\leq C_1h^2+B_1h+A_2
            \end{split}
        \end{equation*}
        Having shown that $S_x(h)$ is uniformly subquadratic in $h$, we also note that when $\rho\in C^{1,1}$ (H3), $S_x(h)$ is also in $C^{1,1}$. Observe that 
        \begin{equation*}
            \frac{d}{dh}S_x(h)=\int_\Omega K(x,y)\rho'(u(x,t)+h-u(y,t))dy
        \end{equation*}
        And so clearly for any compact subdomain $I\subset \mathbb{R}$ we can write 
        \begin{equation}\label{EQ:SinC11}
        \begin{split}
            \bigg|&\frac{d}{dh}S_x(h_1)-\frac{d}{dh}S_x(h_2)\bigg|\\
            &\leq \int_\Omega |K(x,y)| |\rho'(u(x,t)+h_1-u(y,t))-\rho'(u(x,t)+h_2-u(y,t))|dy\\
            &\leq \int_\Omega K(x,y)L_{\rho}|h_1-h_2|dy\\
            &\leq \sup_{x\in\Omega}\|K(x,\cdot)\|_{L^1(\Omega)}L_{\rho}|h_1-h_2|
            \end{split} 
        \end{equation}
        Because $u$ is bounded, the choice of a compact subdomain $I$ gives us a Lipschitz constant for $\rho'$ on the subdomain $[-2\sup |u|-\sup I,2\sup|u|+\sup I]$ which is called $L_\rho$ in \eqref{EQ:SinC11}. Therefore $\frac{d}{dh}S_x(h)$ is locally Lipschitz. 

        This is important because it means that, for any $x$, when $\Delta t$ is small enough, $S_x(h)-\frac{h^2}{\Delta t}\leq C_2h^2+A_3$ for some negative $C_2$ and some $A_3$. Thus $S_x(h)-\frac{h^2}{\Delta t}$ must have a global maximizer, $h^*$ and that global maximizer will satisfy 
        
       \[\frac{d}{dh}S_x(h^*)=2\frac{h^*}{\Delta t}\]
        because everything is continuously differentiable. 

        Having observed this, we note also that there must be a negative $h^-$ so that when $h<h^-$ then $S_x(h)-\frac{(h)^2}{\Delta t}<S_x(0)$. Likewise there must be an $h^+$ so that $h>h^+\implies S_x(h)-\frac{(h)^2}{\Delta t}<S_x(0)$.
        These $h^-$ and $h^+$ will form a compact interval which will surely contain $h^*$. Moreover, from the chosen interval we have a Lipschitz constant for $\frac{d}{dh}S_x$ which we call $L_S$.
        The next step is to put bounds on $h^*$, in order to do this we will consider different cases: when $h^*>0, h^*<0$, and $h^*=0$. 

        If $h^*>0$ we write that 
        \begin{equation*}
            \begin{split}
                -h^*L_S&\leq \frac{d}{dh}S_x(h^*)-\frac{d}{dh}S_x(0)\leq h^*L_S\\
                -h^*L_S&\leq \frac{h^*}{2}-\frac{d}{dh}S_x(0)\leq h^*L_S\\
                \end{split}
        \end{equation*} 
        Reorganization on each side of the inequality, with the knowledge that $\Delta t$ can be made small enough so that $2-L_S\Delta t>0$, will yield 
        \begin{equation*}
                \frac{d}{dh}S_x(0)\frac{\Delta t}{2+L_S\Delta t}\leq h^*\leq \frac{d}{dh}S_x(0)\frac{\Delta t}{2-L_S\Delta t}
        \end{equation*}

        Likewise we can show that if $h^*<0$ we will get the inequality 
        \begin{equation*}
            \frac{d}{dh}S_x(0)\frac{\Delta t}{2-L_S\Delta t}\leq h^*\leq \frac{d}{dh}S_x(0)\frac{\Delta t}{2+L_S\Delta t}
        \end{equation*}

        We also note here that if $h^*=0$ this requires that $\frac{d}{dh}S_x(0)=0$. 

        Now that we have bounds on $h^*$, recall that $h^*$ is the change in strategy in one time step. That it $h^*=u(x,t+\Delta t)-u(x,t)$. If we make this substitution and divide our inequalities by $\Delta t$ we see that 
        \begin{equation*}
            \begin{split}
                \frac{d}{dh}S_x(0)\frac{1}{2+L_S\Delta t}&\leq \frac{u(x,t+\Delta t)-u(x,t)}{\Delta t}\leq\frac{d}{dh}S_x(0)\frac{1}{2-L_S\Delta t}\quad \quad h^*>0\\
                \frac{d}{dh}S_x(0)\frac{1}{2-L_S\Delta t}&\leq \frac{u(x,t+\Delta t)-u(x,t)}{\Delta t}\leq\frac{d}{dh}S_x(0)\frac{1}{2+L_S\Delta t}\quad \quad h^*<0
            \end{split}
        \end{equation*}
        Both inequalities trivially hold in the case that $h^*=0$. In any case, when we take $\Delta t\to 0$ we see that by squeeze theorem 
        \begin{equation*}
            \frac{\partial}{\partial t}u(x,t)=\frac{1}{2}\frac{d}{dh}S_x(0)
        \end{equation*}

        To complete the proof we need only note that 
        
       \[\frac{d}{dh}S_x(0)=\int_\Omega K(x,y)\rho'(u(x,t)-u(y,t))dy\] and do a trivial rescaling of space-time to arrive at the desired nonlocal equation.
        \end{proof}

        Thus we have shown that, for any Toeplitz type game, we can make a continuous extension and describe how bounded strategy profiles will evolve in time with a nonlocal equation. We will call the nonlocality 
        \begin{equation}\label{nonlocality}
            g[u](x,t):=\int_\Omega K(x,y)\rho'(u(x,t)-u(y,t))dy
        \end{equation}
        and express the nonlocal equation as $u_t=g[u]$. Notice that if $K$ and $\rho$ are selected appropriately a certain Toeplitz coordination game extends directly to the nonlocal heat equation \cite{NonlocalDiffusionProblems,GOMEZ2017} and a certain Toeplitz anti-coordination game extends directly to the nonlocal backward heat equation. Being able to study these games with a new suite of tools from PDEs and nonlocal equations will allow us to understand coordination and anti-coordination in space more fully. Moreover, the present investigation into this game will also extend our understanding on nonlinear nonlocal diffusion equations.  
\section{Existence, Uniqueness, and Other Main Results for Toeplitz Type Games}\label{SEC:ExistenceUniquenessAndMainResults}
\subsection{General Toeplitz Games}\label{SUBSEC:GeneralToeplitzGames}
 Now that we have a nonlocal equation which captures the behavior we are interested in, we will proceed with some classical existence and uniqueness results for the initial value problem (IVP) wherein an initial strategy profile $u_0$ is given in $\Omega$ and there are no boundary data prescribed. The assumption is that no information is entering the system from outside the domain and so it is argued in \cite{CHASSEIGNE2006} that this is the analogue to the Neumann boundary condition for nonlocal diffusion type problems. It is important to note that these results for the nonlocal equation do not depend on the assumptions that make the model appropriate for the application area ($H1-3$ from proposition \ref{PROP:ModelWellFounded}). Namely, $\rho$ need not be subquadratic. However, we will require that $\rho\in C^{1,1}$. 
 
 The main result of this section is a Picard iteration type proof relying on the contraction mapping principle. In order to achieve this, we will first state and prove two important lemmas. The first will show that the nonlocality, $g$, maps continuous and  bounded functions to continuous and bounded functions. The second will show that $g$ is locally Lipschitz with respect to the sup norm in $C^0_b$. For the following, let $\Omega\subset \mathbb{R}^n$ be an open domain and let $\Omega_T:=\Omega\times [0,T]$. 

  \begin{lemma}[$g$ is well defined]\label{LEM:welldefined}
        The non local operator $g[u](x,t)=\int_\Omega K(x,y)\rho'(u(x)-u(y))dy$ is well defined from $C^0_b(\Omega_T,\mathbb{R})$ into $C^0_b(\Omega_T,\mathbb{R})$.
    \end{lemma}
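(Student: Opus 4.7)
The plan is to verify the two defining requirements of $C^0_b(\Omega_T,\mathbb{R})$ separately: first that $g[u]$ is uniformly bounded on $\Omega_T$, and then that $g[u]$ is jointly continuous in $(x,t)$. The main tool for boundedness is the observation that since $u \in C^0_b(\Omega_T)$, the difference $u(x,t)-u(y,t)$ is confined to the compact interval $[-2\|u\|_\infty,2\|u\|_\infty]$, on which $\rho'$ (being Lipschitz by (H3)) is bounded by some constant $C$. Pulling that constant through the integral and using the standing assumption $\sup_{x\in\Omega}\|K(x,\cdot)\|_{L^1(\mathbb{R}^n)}<\infty$ then yields $\|g[u]\|_\infty \le C\,\sup_{x\in\Omega}\|K(x,\cdot)\|_{L^1}$.

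For continuity, I would fix $(x_0,t_0)\in\Omega_T$, take a sequence $(x_n,t_n)\to(x_0,t_0)$, and split the increment as
\begin{equation*}
g[u](x_n,t_n)-g[u](x_0,t_0) = I_n + J_n,
\end{equation*}
where
\begin{equation*}
I_n := \int_\Omega \bigl(K(x_n,y)-K(x_0,y)\bigr)\,\rho'\bigl(u(x_n,t_n)-u(y,t_n)\bigr)\,dy,
\end{equation*}
\begin{equation*}
J_n := \int_\Omega K(x_0,y)\Bigl[\rho'\bigl(u(x_n,t_n)-u(y,t_n)\bigr) - \rho'\bigl(u(x_0,t_0)-u(y,t_0)\bigr)\Bigr]\,dy.
\end{equation*}
The term $I_n$ is controlled by pulling out the uniform bound $C$ on $\rho'$ and invoking the hypothesis $K\in C^0_b(\Omega;L^1(\mathbb{R}^n))$, which gives $\|K(x_n,\cdot)-K(x_0,\cdot)\|_{L^1}\to 0$. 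For $J_n$, the Lipschitz constant $L_\rho$ of $\rho'$ on $[-2\|u\|_\infty,2\|u\|_\infty]$ bounds the bracketed expression by $L_\rho\bigl(|u(x_n,t_n)-u(x_0,t_0)|+|u(y,t_n)-u(y,t_0)|\bigr)$, which converges to $0$ for each fixed $y$ by joint continuity of $u$. Since the integrand is dominated by the integrable function $2C\,K(x_0,y)$, dominated convergence closes the argument.

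The part that requires the most care is the $J_n$ estimate, because the convergence $u(y,t_n)\to u(y,t_0)$ is only pointwise in $y$, not uniform. The key observation is that we do not actually need uniformity: the integrability of $K(x_0,\cdot)$ together with the uniform bound on $\rho'$ supplies an integrable majorant, so DCT does all the work. The split above is designed precisely so that the $y$-dependent kernel is frozen at $x_0$ in the piece where we need the DCT, while the variation of $K$ in $x$ is absorbed into $I_n$ where we have only an $L^1$ modulus of continuity available.
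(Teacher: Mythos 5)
Your proof is correct and uses the same two-step structure and the same decomposition of the increment into a ``$K$ varies'' piece and a ``$\rho'$ varies'' piece as the paper; the boundedness argument and the treatment of $I_n$ are identical to the paper's. The one place you genuinely diverge is the second term: the paper handles $J_n$ (its $I_2$) by truncating $\Omega$ to a large ball carrying all but an $\epsilon$-fraction of the mass of $K(x_0,\cdot)$ and then asserting that $u(y,t_n)\to u(y,t_0)$ \emph{uniformly} in $y$ over the truncated region, whereas you freeze the kernel at $x_0$, note the pointwise (in $y$) convergence of the integrand, and invoke dominated convergence with majorant $2C\,K(x_0,\cdot)\in L^1$. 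Your route is arguably the cleaner of the two: the paper's uniform-convergence claim on $\Omega\cap\overline{B_R(0)}$ is the delicate step of its argument (that set is bounded but need not be compact, so uniformity is not automatic from continuity of $u$ on $\Omega_T$ alone), and your observation that only pointwise convergence plus an integrable majorant is needed sidesteps this entirely. The trade-off is that the paper's quantitative $\epsilon$--$\delta$ bookkeeping yields an explicit modulus of continuity for $g[u]$, which the soft DCT argument does not, but nothing downstream in the paper requires that.
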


    \begin{proof}
        To show that $g$ is well defined in this space, let $u\in C^0_b(\Omega_T)$ and we will show that $g[u]$ is continuous on $\Omega_T$ and bounded. To show that $g[u]$ is bounded, first note that, when $u$ is bounded, because $\rho\in C^{1,1}$, surely there is a $W$ such that $|\rho'(u(x,t_1)-u(y,t_2))|<W$ for all $x,y\in \Omega$ and $t_1,t_2\in [0,T]$ by a continuous on compact argument. This means that $|g[u](x,t)|\leq \int_\Omega |K(x,y)|Wdy\leq W\|K(x,\cdot)\|_{L^1(\Omega)}$. By assumption $\|K(x,\cdot)\|_{L^1(\Omega)}$  is uniformly bounded so $\sup_{x\in\Omega}\|K(x,\cdot)\|_{L^1(\Omega)}\leq D$ for some $D<\infty$. Thus we have shown that $\|g[u]\|_\infty<WD$, so $g[u]$ is bounded whenever $u$ is continuous and bounded.

        Consider the sequence ${(x_n,t_n)}_{n=1}^\infty\subset\Omega_T$ with $(x_n,t_n)\rightarrow (x,t)\in\Omega$. Furthermore, consider the difference 

        \begin{equation*}
        \begin{split}
        |g[u](x_n,t_n)&-g[u](x,t)|\\
        &=\bigg|\int_\Omega K(x_n,y)\rho'(u(x_n,t_n)-u(y,t_n))-K(x,y)\rho'(u(x,t)-u(y,t))dy\bigg|\\
            &\leq \int_\Omega|(K(x_n,y)-K(x,y))\rho'(u(x_n,t_n)-u(y,t_n))|dy\\
            &\quad +\int_\Omega |K(x,y)(\rho'(u(x_n,t_n)-u(y,t_n))-\rho'(u(x,t)-u(y,t)))|dy\\
            &\leq I_1+I_2
            \end{split}
        \end{equation*}
        We will consider each integral separately. Let $\epsilon>0$. Note that there is an $R$ such that $\int_{\Omega\setminus B_{R}(0)}K(x,y)dy<\frac{\epsilon}{8W}$. Now consider $x_n$ and $y$ inside $\overline{B_R(0)}.$ $\rho\in C^{1,1}$ so $\exists \delta$ such that $|x-y|<\delta \Rightarrow |\rho'(x)-\rho'(y)|<\frac{\epsilon}{4D}$. Because $u$ is continuous, $\exists M_1$ such that $|u(x_n,t_n)-u(x,t)|<\delta/2$ whenever $n>M_1$. Furthermore, In $\Omega\cap \overline{B_R(0)}$ there is surely a $M_2$  such that $n>M_2\implies |u(y,t_n)-u(y,t)|<\delta/2$ for any $y\in \Omega\cap\overline{B_R(0)}$. It is an easy application of the triangle inequality to see that this implies $|(u(x_n,t_n)-u(y,t_n))-(u(x,t)-u(y,t))|<\delta$ when $n>\max\{M_1,M_2\}$ and thus, when $n$ is sufficiently large
        
       \[|\rho'(u(x_n,t_n)-u(y,t_n))-\rho'(u(x,t)-u(y,t))|\leq \frac{\epsilon}{4D} \quad\forall y\in\Omega\cap \overline{B_R(0)}.\]

        Therefore we can see that
        \begin{equation*}
            \begin{split}
                I_2&\leq 2W\int_{\Omega\setminus B_R(0)}K(x,y)dy+\int_{\Omega\cap B_R(0)}K(x,y)\frac{\epsilon}{4D}dy\\
                &\leq 2W\frac{\epsilon}{8W}+D\frac{\epsilon}{4D}=\frac{\epsilon}{2}
            \end{split}
        \end{equation*}

        Now we consider $I_1$. It is clear that
        
       \[I_1 \leq W\int_{\Omega}|K(x_n,y)-K(x,y)|dy.\] 
        From the assumed continuity of $\|K(x,\cdot)\|_{L^1(\Omega)}$ we get immediately that $\exists M_3$ such that for $n>M_3$, 
        $\|K(x_n,\cdot)- K(x,\cdot)\|_{L^1(\Omega)}\leq \frac{\epsilon}{2W}$.

        Thus, if $M=\max\{M_1,M_2,M_3\}$, then $n>M\implies$
        
       \[|g[u](x_n,t_n)-g[u](x,t)|\leq I_1+I_2\leq \frac{\epsilon}{2}+W\frac{\epsilon}{2W}=\epsilon\]
        This clearly works for any $\epsilon>0$ so we have shown that $g[u]$ is continuous at $(x,t)$ for any $(x,t)\in \Omega _T$. Therefore we have proved that $g:C^0_b(\Omega_T,\mathbb{R})\rightarrow C^0_b(\Omega_T,\mathbb{R})$ is well defined.  
    \end{proof}
    
    \begin{lemma}[Lipschitz Continuity of $g$] \label{LEM:Lipschitzg}
        For any bounded (in the sup norm sense) subset $X_R:=\{u\in C^0_b(\Omega_T);\|u\|_\infty \leq R\}\subset C^0_b(\Omega_T)$, there exists a $C^g\geq 0 $ such that for every $u,v\in X_R$ 
        \begin{equation*}
            \|g[u](\cdot,t)-g[v](\cdot,t)\|_{\infty} \leq C^g\|u(\cdot,t)-v(\cdot,t)\|_{\infty} 
        \end{equation*}
    \end{lemma}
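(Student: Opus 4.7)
The plan is to exploit the $C^{1,1}$ regularity of $\rho$ together with the uniform bound on $\|K(x,\cdot)\|_{L^1(\Omega)}$ to obtain a Lipschitz estimate with a constant that depends only on $R$ and on the kernel. Fix $u,v \in X_R$. Since both functions are bounded by $R$, every difference $u(x,t)-u(y,t)$ and $v(x,t)-v(y,t)$ lies in the compact interval $[-2R,2R]$, on which $\rho'$ is Lipschitz with some constant $L_\rho = L_\rho(R)$ coming from hypothesis (H3). This localization of $\rho'$ is the whole reason the statement is restricted to the ball $X_R$ rather than all of $C^0_b$, so it is the first thing I would record.

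Next I would fix an arbitrary $(x,t) \in \Omega_T$ and directly estimate the pointwise difference:
\begin{equation*}
\begin{split}
|g[u](x,t)-g[v](x,t)|
&\leq \int_\Omega K(x,y)\bigl|\rho'(u(x,t)-u(y,t)) - \rho'(v(x,t)-v(y,t))\bigr|\,dy\\
&\leq L_\rho \int_\Omega K(x,y)\bigl|(u(x,t)-u(y,t)) - (v(x,t)-v(y,t))\bigr|\,dy.
\end{split}
\end{equation*}
A triangle inequality inside the integrand splits this into $|u(x,t)-v(x,t)|+|u(y,t)-v(y,t)|$, each of which is bounded pointwise by $\|u(\cdot,t)-v(\cdot,t)\|_\infty$. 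Pulling that supremum outside the integral produces a factor of $2\,\|K(x,\cdot)\|_{L^1(\Omega)}$, which by the standing assumption on $K$ is bounded uniformly in $x$ by some $D<\infty$ (the same $D$ used in the previous lemma).

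The outcome is
\begin{equation*}
|g[u](x,t)-g[v](x,t)| \leq 2 L_\rho D \,\|u(\cdot,t)-v(\cdot,t)\|_\infty,
\end{equation*}
and since the right-hand side is independent of $x$, taking the supremum in $x$ gives the desired inequality with $C^g := 2L_\rho(R) D$. I do not anticipate a genuine obstacle here; the only subtle point is recognising that the Lipschitz constant must be taken for $\rho'$ on the interval $[-2R,2R]$ and hence legitimately depends on $R$, which is why the constant $C^g$ cannot be made uniform over all of $C^0_b(\Omega_T)$. Once this is observed, the rest of the argument is a single application of the triangle inequality followed by the $L^1$ bound on $K$.
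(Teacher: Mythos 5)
Your proposal is correct and follows essentially the same route as the paper's proof: localize $\rho'$ to $[-2R,2R]$ to get the Lipschitz constant $L_\rho(R)$, split $|(u(x,t)-u(y,t))-(v(x,t)-v(y,t))|$ by the triangle inequality, and absorb the kernel via the uniform bound $\sup_x\|K(x,\cdot)\|_{L^1(\Omega)}\leq D$, arriving at the identical constant $C^g=2L_\rho D$. The only difference is cosmetic (you estimate pointwise and then take the supremum in $x$, whereas the paper pulls the supremum out first).
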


    \begin{proof}
        Let $u,v\in X_R$. Then we observe that for each $0<t<T,$
        \begin{equation*}
            \begin{split}
              \|g[u](\cdot,t)-g[v](\cdot,t)\|_\infty   &=\bigg\|\int_\Omega K(x,y)(\rho'(u(x,t)-u(y,t))-\rho'(v(x,t)-v(y,t))dy\bigg\|_\infty\\
                &\leq \sup_{x\in \Omega}\int_\Omega|K(x,y)||\rho'(u(x,t)-u(y,t))-\rho'(v(x,t)-v(y,t))|dy\\
                &\leq \sup_{x\in \Omega}\|\rho'(u(x,
                t)-u(\cdot,t))-\rho'(v(x,t)-v(\cdot,t))\|_\infty \int_\Omega K(x,y)dy. 
            \end{split}
        \end{equation*}

        Recall that $\rho\in C^{1,1}(\mathbb{R})$  and $\rho'$ has Lipschitz constant $L_\rho$ for the compact interval $[-2R,2R]$. Now note that for any $(x,t)\in\Omega_T$
        \begin{equation*}
            \begin{split}
                \sup_{y\in \Omega}|\rho'(u(x,t)&-u(y,t))-\rho'(v(x,t)-v(y,t))|\\
                &\leq \sup_{y\in \Omega}L_\rho|u(x,t)-u(y,t)-v(x,t)+v(y,t)|\\
                &\leq \sup_{y\in \Omega}L_\rho (|u(x,t)-v(x,t)|+|u(y,t)-v(y,t)|)\\
                &\leq L_\rho(|u(x,t)-v(x,t)|+\|u(\cdot,t)-v(\cdot, t)\|_\infty)
            \end{split}
        \end{equation*}
        so naturally 
        
       \[\sup_{x\in \Omega}\|\rho'(u(x,t)-u(\cdot,t))-\rho'(v(x,t)-v(\cdot,t))\|_\infty \leq 2L_\rho\|u(\cdot,t)-v(\cdot,t)\|_{\infty}\]
        This, and the fact that $\|K(x,\cdot)\|_{L^1(\Omega)}\leq D$ uniformly for some finite $D$, gives us the result that for any $t\in[0,T]$
        
       \[\|g[u](\cdot,t)-g[v](\cdot,t)\|_{\infty}\leq 2L_\rho D\|u(\cdot,t)-v(\cdot,t)\|_{\infty}\]
        Clearly, then we have $C^g=2L_\rho D$ and 
        
       \[\sup_{t\in [0,T]}\|g[u](\cdot,t)-g[v](\cdot,t)\|_{\infty}\leq C^g\sup_{t\in [0,T]}\|u(\cdot,t)-v(\cdot,t)\|_{\infty}.\] It is important to notice that $L_\rho$ may depend on the choice of $R$ so $C^g$ depends on $R$.
        \end{proof}

        Having shown that $g$ is well defined and Lipschitz continuous we can now prove short time existence and uniqueness of the solution to the initial value problem $u_t=g[u]$ in $\Omega_T$ with $u(0,t)=u_0\in C^0_b(\Omega)$ through a contraction mapping principle.

    \begin{theorem}[Short Time Existence and Uniqueness] \label{THM:ShortTimeExistenceAndUniqueness}
        The initial value problem $u_t=g[u]$ in $\Omega_\tau$ has a unique continuous and bounded solution in $\Omega_\tau$ for some $\tau$, when $u(x,0)=u_0\in C_b^0(\Omega)$, $\rho\in C^{1,1}(\mathbb{R})$, and $K\in C^0_b(\Omega;L^1(\Omega))$.
    \end{theorem}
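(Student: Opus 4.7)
The plan is to recast the initial value problem as an integral fixed point equation and apply the Banach contraction principle. Specifically, $u \in C^0_b(\Omega_\tau)$ solves $u_t = g[u]$ with $u(\cdot,0) = u_0$ if and only if it satisfies
\begin{equation*}
u(x,t) = u_0(x) + \int_0^t g[u](x,s)\,ds \quad \text{for } (x,t) \in \Omega_\tau.
\end{equation*}
I would define the operator $\Phi[u](x,t) := u_0(x) + \int_0^t g[u](x,s)\,ds$ and choose $\tau$ small enough that $\Phi$ is a contraction on a suitable closed ball in $C^0_b(\Omega_\tau)$.

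First, fix any $R > \|u_0\|_\infty$ and work in the closed ball $X_R = \{u \in C^0_b(\Omega_\tau) : \|u\|_\infty \leq R\}$, which is complete in the sup norm. Lemma \ref{LEM:welldefined} guarantees $g[u] \in C^0_b(\Omega_\tau)$ for $u \in X_R$, and tracing through its proof gives a uniform bound $\|g[u]\|_\infty \leq M(R)$, where $M(R) = D \cdot \sup_{|z|\leq 2R}|\rho'(z)|$. Continuity of $\Phi[u]$ in $t$ is immediate from continuity of the integrand, while continuity in $x$ follows from continuity of $u_0$, the $x$-continuity of $g[u](\cdot,s)$ from Lemma \ref{LEM:welldefined}, and a dominated convergence argument using the dominating bound $M(R)$. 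Hence $\Phi[u] \in C^0_b(\Omega_\tau)$, and choosing $\tau$ small enough that $\tau M(R) \leq R - \|u_0\|_\infty$ ensures $\Phi$ maps $X_R$ into itself.

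Next I would invoke Lemma \ref{LEM:Lipschitzg} to establish that, after possibly shrinking $\tau$ further, $\Phi$ is a strict contraction on $X_R$. For $u,v \in X_R$,
\begin{equation*}
\|\Phi[u](\cdot,t) - \Phi[v](\cdot,t)\|_\infty \leq \int_0^t \|g[u](\cdot,s) - g[v](\cdot,s)\|_\infty\,ds \leq \tau\, C^g \sup_{s \in [0,\tau]} \|u(\cdot,s) - v(\cdot,s)\|_\infty,
\end{equation*}
with $C^g = 2 L_\rho D$ provided by Lemma \ref{LEM:Lipschitzg} on $X_R$. Taking the sup in $t$ and requiring $\tau C^g < 1$ yields a strict contraction, so the Banach fixed point theorem delivers a unique $u \in X_R$ with $\Phi[u] = u$. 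Since $s \mapsto g[u](x,s)$ is continuous, the fundamental theorem of calculus applied to the integral equation shows $u$ is differentiable in $t$ with $u_t = g[u]$ pointwise on $\Omega_\tau$, and the initial condition $u(\cdot,0) = u_0$ is built in by construction.

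The one subtlety is promoting uniqueness from $X_R$ to all of $C^0_b(\Omega_\tau)$. If $u,v$ are any two bounded continuous solutions, set $R' = \max(\|u\|_\infty, \|v\|_\infty)$; both lie in $X_{R'}$ and satisfy the integral formulation, so Lemma \ref{LEM:Lipschitzg} applied with constant $C^g(R')$ gives
\begin{equation*}
\|u(\cdot,t) - v(\cdot,t)\|_\infty \leq C^g(R') \int_0^t \|u(\cdot,s) - v(\cdot,s)\|_\infty\,ds,
\end{equation*}
and Gronwall's inequality forces $u \equiv v$ on $[0,\tau]$. I do not expect a genuinely hard step; the care required is simply in tracking how $\tau$ depends on $R$ so that the self-mapping threshold $\tau \leq (R-\|u_0\|_\infty)/M(R)$ and the contraction threshold $\tau < 1/C^g(R)$ can be imposed simultaneously.
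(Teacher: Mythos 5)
Your proposal is correct and follows essentially the same route as the paper: recast the IVP as the integral equation $u = u_0 + \int_0^t g[u]\,ds$, use Lemma \ref{LEM:welldefined} to get the self-mapping bound and Lemma \ref{LEM:Lipschitzg} to get the contraction, and apply the Banach fixed point theorem on a closed ball with $\tau$ chosen small depending on $R$. Your final Gr\"onwall step is a worthwhile addition: it upgrades uniqueness from the ball $X_R$ to the full class of bounded continuous solutions, a point the paper's argument (which only yields uniqueness within $E_{R,T}$) leaves implicit.
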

    \begin{proof}
        Let $\Omega_T=\Omega\times [0,T]$ with $T$ to be chosen later. Equip the function space $C_b^0(\Omega_T)$ with the standard sup norm $\|u\| = \sup_{t\in[0,T]}\|u(\cdot,t)\|_\infty$. 
        Now, for some $R>\|u_0\|_\infty$, let $E_{R,T}:=\{u\in C^0_b(\Omega_T,\mathbb{R});u(x,0)=u_0, \|u\|\leq R\}$. Observe that $E_{R,T}$ is nonempty as the map $(x,t)\mapsto u_0(x)$ belongs in  $E_{R,T}$. Moreover, observe that $E_{R,T}$ is complete with respect to the sup norm so we will be able to proceed with a Banach Fixed Point Theorem (BFPT) argument. It is clear that a solution to the IVP will also satisfy
        
       \[u(x,t)=u_0(x)+\int_0^tg[u](x,s)ds.\]
        Let $\Theta:C_b^0(\Omega_T,\mathbb{R})\rightarrow C_b^0(\Omega_T,\mathbb{R})$, where $\Theta u = u_0+\int_0^tg[u]ds$, and notice that if this operator has a unique fixed point in $E_{R,T}$ then we have a unique solution to the IVP. 

        First, we show that $\Theta:E_{R,T}\rightarrow E_{R,T}$ for some $T$. It is easy to say that $\Theta u\in C^0_b(\Omega_T,\mathbb{R})$ and that $\Theta u(x,0)=u_0(x)$.  Because of lemma \ref{LEM:welldefined} we know that $g[u]\in C_b^0(\Omega_T)$ so its time antiderivative is obviously continuous in space and time. 
        
        In order to show that that $\|\Theta u\| \leq R$ we note that $g[u]$ is bounded whenever $u$ is bounded. In particular, $g[u]\leq WD$ where $W$ is the bound for $\rho'$ on  $[-2R,2R]$ which, of course, depends on $R$, and $D$ is the uniform bound on $\|K(x,\cdot)\|_{L^1(\Omega)}$. Therefore we can always find a $T_R$ such that $\|\Theta u(\cdot, t)\|_\infty \leq \|u_0\|_\infty +\int_0^tWD \leq R$ for all $t\in [0,T_R]$. Let $T<T_R$ and we have that $\Theta:E_{R,T}\rightarrow E_{R,T}$. 
        
        Now that we have shown that $\Theta$ indeed maps from $E_{R,T}$ to $E_{R,T}$ for some $T$, we need only show that $\Theta$ is a contraction in $E_{R,T}$ for some, possibly smaller, value of $T$. Let $u,v \in E_{R,T}$ and note that
        \begin{equation*}
        \begin{split} 
            \|\Theta u-\Theta v\| &= \sup_{t\in[0,T]} \bigg\|\int_0^tg[u](\cdot,s)-g[v](\cdot,s)ds\bigg\|_{\infty}\\
            &\leq \sup_{t\in[0,T]}\int_0^t\|g[u]-g[v]\|ds
            \end{split} 
        \end{equation*}
        By lemma \ref{LEM:Lipschitzg} we get immediately that
        
       \[\|\Theta u-\Theta v\|\leq C^gT\|u-v\|.\] Notice here that $C^g$ depends on $R$ because the Lipschitz constant for $g$ is defined for a particular compact subset.  
        With this Lipschitz constant for $\Theta$, when $T\leq\frac{1}{2C^g}$ we know that $\Theta$ is a contraction. Thus, by the BFPT  there is a unique $u\in E_{R,T}$ such that $\Theta u = u$.  Thus, there is a continuous and bounded $u$ so that $u_t=g[u]$ on $\Omega_T$ and $u(0,t)=u_0\in C^0_b(\Omega)$ so long as $T<\min\{T_R,\frac{1}{2C^g}$\}. This completes the proof of short time existence and uniqueness by way of the Banach Fixed Point Theorem. We can use an extension principle to get longer existence time. Note that for a given $u_0$ we can choose an $R_1$ and find a resulting $T_1$ so that there is a unique solution on $[0,T_1]$. Take $u(\cdot, T_1-\epsilon)$ for some $\epsilon>0$ as our initial condition, take a new $R_2$ and resulting $T_2$ to find a new solution on $[T_1-\epsilon, T_2]$. Wherever these solutions overlap they must be identical because of the uniqueness proved here. We cannot use this extension to say that the solution is global in time because we have no lower bound on the minimal existence time. As $R$ grows, the resulting $T$ may shrink quickly enough so that we cannot extend the solution beyond some finite time $\tau$.     
    \end{proof}

    We have shown that for any continuously extended Teoplitz game with certain hypotheses on $K$ and $\rho$, and with a bounded and continuous initial strategy profile, there is exactly one way the strategy profile will evolve for some time. Next we will show that if there is a finite maximal existence time, $T$, (i.e. $u(x,t)$ cannot be extended beyond time $T$ as a solution of the IVP) then there is necessarily finite time blowup.

    \begin{lemma}[Finite Time Blow up]\label{LEM:FiniteTimeBlowup}
    If  $T<\infty$ is the maximal time of existence for a solution $u$ to the IVP $u_t=g[u]$ with $u(\cdot,0)=u_0\in C^0_b(\Omega)$ then $\|u(\cdot, t)\|_\infty \rightarrow \infty $ as $t\to T$.
    \end{lemma}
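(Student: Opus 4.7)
The plan is to prove this by contradiction using the short-time existence theorem as a restart tool. Suppose $T < \infty$ is the maximal time of existence but $\|u(\cdot,t)\|_\infty$ does not tend to $\infty$ as $t \to T^-$. Negating the divergence gives a constant $M > 0$ and a sequence $t_n \uparrow T$ with $\|u(\cdot,t_n)\|_\infty \leq M$ for every $n$. The strategy is to restart the IVP at the time slices $t_n$ using $u(\cdot, t_n)$ as initial data, and to show that the resulting solutions exist on a time interval of length independent of $n$, so that for $n$ large enough they extend past $T$.

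First I would fix any $R > M$, say $R = 2M$, and apply Theorem \ref{THM:ShortTimeExistenceAndUniqueness} with initial data $u(\cdot,t_n)$, which lies in $C^0_b(\Omega)$ with sup norm bounded by $M < R$. The theorem produces a unique bounded continuous solution $v_n(x,s)$ on $\Omega \times [t_n, t_n + \tau]$, where $\tau = \min\{T_R, 1/(2C^g)\}$. The essential observation is that $T_R$ depends only on $R$, $D = \sup_{x}\|K(x,\cdot)\|_{L^1(\Omega)}$, and the bound $W$ on $|\rho'|$ over $[-2R, 2R]$, while $C^g = 2 L_\rho D$ depends only on $R$, $D$, and the Lipschitz constant $L_\rho$ of $\rho'$ over $[-2R,2R]$. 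Hence $\tau$ is a positive constant depending on $R$ (and on the fixed data $\rho$ and $K$) but \emph{not} on $n$.

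Next I would glue these restarts to $u$ using uniqueness. On the overlap $[t_n, \min(t_n + \tau, T))$, both $u$ and $v_n$ solve the same IVP with identical initial data $u(\cdot,t_n)$; by the uniqueness half of Theorem \ref{THM:ShortTimeExistenceAndUniqueness}, $v_n \equiv u$ there. Thus defining $\tilde u$ as $u$ on $[0,T)$ and $v_n$ on $[t_n, t_n + \tau]$ yields a well-defined continuous bounded extension of $u$ to $[0, t_n + \tau]$ that still satisfies $u_t = g[u]$. Choosing $n$ large enough that $T - t_n < \tau$ gives $t_n + \tau > T$, which contradicts the maximality of $T$.

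The only delicate point in this argument is confirming that the existence time produced by Theorem \ref{THM:ShortTimeExistenceAndUniqueness} truly depends only on a uniform bound $R$ for the initial data and on the fixed data $\rho, K$, so that a single $\tau > 0$ works for every restart time $t_n$. This is essentially a bookkeeping check of the proof of Theorem \ref{THM:ShortTimeExistenceAndUniqueness}, tracking that $W$, $L_\rho$, and $D$ are all determined by $R$ alone once we have chosen to keep the solution in the ball of radius $R$. Once that uniform lower bound on the restart time is in hand, the contradiction with the maximality of $T$ is immediate.
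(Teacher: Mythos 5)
Your argument is correct, and it takes a genuinely different route from the paper's. The paper negates the conclusion in a weakened form --- it assumes $\|u(\cdot,t)\|_\infty\leq R$ for \emph{all} $t\in[0,T)$ --- and then shows $u(\cdot,t_n)$ is Cauchy in $C^0_b(\Omega)$, identifies a limiting profile $\tilde u_0$ at $t=T$, restarts the IVP there, and verifies by hand that the glued function satisfies the integral equation. You instead take the full negation (boundedness only along a sequence $t_n\uparrow T$) and restart \emph{before} $T$, using the key observation that the existence time produced by Theorem \ref{THM:ShortTimeExistenceAndUniqueness} depends only on the sup-norm bound $R$ of the initial slice (through $T_R$, $W$, $L_\rho$, and $D$), so a single $\tau>0$ works for every $t_n$ and the restart eventually overshoots $T$. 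This buys you two things: you never need to construct $\lim_{t\to T^-}u(\cdot,t)$, and you directly obtain $\|u(\cdot,t)\|_\infty\to\infty$ rather than only $\limsup_{t\to T}\|u(\cdot,t)\|_\infty=\infty$, which is all the paper's contradiction hypothesis literally rules out (its closing sentence upgrading the limsup to a limit is quite terse). The one step deserving more care is the gluing: the uniqueness actually proved in Theorem \ref{THM:ShortTimeExistenceAndUniqueness} is uniqueness within the ball $E_{R,T}$, and $u$ restricted to $[t_n,\min(t_n+\tau,T))$ need not stay in the ball of radius $2M$ just because its initial slice does. You should either record uniqueness among all continuous bounded solutions (a Gr\"onwall argument using Lemma \ref{LEM:Lipschitzg} at a radius large enough to contain both solutions) or run a standard connectedness argument on $\{s:\ u=v_n\ \text{on}\ [t_n,s]\}$. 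Since the paper leans on the same unstated strengthening in its own extension principle, this is a matter of polish rather than a gap.
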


    \begin{proof}
        Suppose that $u$ is a solution to the IVP $u_t=g[u]$ with $u(\cdot, 0)=u_0\in C^0_b(\Omega)$ that has a maximal time of existence $T<\infty$. Moreover, by way of contradiction suppose there is a bounded subset $E_R:=\{v\in C^0_b(\Omega);\|v\|_\infty\leq R\}$ such that for all $t\in [0,T)$ $u(\cdot, t)\in E_R$. Note that $E_{R}$ is  a closed subset of $C^0_b(\Omega).$

        Consider now a sequence of times $t_n$ which have $t_n\to T$ as $n\rightarrow \infty$. This is necessarily a Cauchy sequence in the reals. We will show that $u(\cdot, t_n)$ is a Cauchy sequence in $E_R$ with respect to the sup norm. Note that for any $\delta>0$ there is an $N$ so that $n,m>N\implies |t_n-t_m|<\delta$. 
        \begin{equation*}
            \begin{split}
                \|u(\cdot,t_n)-u(\cdot, t_m)\|_\infty&=\bigg\|\int_0^{t_n}g[u](\cdot, s)ds-\int_0^{t_m}g[u](\cdot,s)ds\bigg\|_\infty\\
                &=\bigg\|\int_{t_n}^{t_m}g[u](\cdot,s)ds\bigg\|_\infty\\
                &\leq \delta \sup_{s\in[t_n,t_m]}\|g[u](\cdot,s)\|_\infty
            \end{split}
        \end{equation*}
        By assumption, $\|u\|_\infty$ is bounded for all time $t\in [t_n,t_m]\subset [0,T)$ uniformly in time by $R$. By lemma \ref{LEM:welldefined} we know that if $u$ is bounded by $R$ then there is a $R_g$ so that $\|g[u]\|_\infty\leq R_g$ and, crucially this upper bound only depends on the choice of $R$ (see proof of lemma \ref{LEM:welldefined}).  Thus we can say that $\sup_{s\in[t_n,t_m]}\|g[u](\cdot,s)\|_\infty\leq R_g$. Thus we have that
        
       \[\|u(\cdot, t_n)-u(\cdot, t_m)\|_\infty\leq \delta R_g\]

        Therefore, for any $\epsilon>0$, we let $\delta=\frac{\epsilon}{2R_g}$ and from this we get an appropriate $N$ so that $|t_n-t_m|<\delta$ and thus $\|u(\cdot,t_n)-u(\cdot, t_m)\|\leq \epsilon$. Thus $u(\cdot, t_n)$ is a Cauchy sequence from the closed subset of a Banach space, $E_R$, and so it has a limit $\tilde{u}_0\in E_R$. (Note carefully that $\tilde{u}_0$ is continuous because the convergence is uniform). Now consider another arbitrary sequence of times $\tau_k$ with $\lim \tau_k\rightarrow T$ and we will show that $u(\cdot, \tau_k)\rightarrow \tilde{u}_0$. Of course, $u(\cdot,t)$ is differentiable in time on $[0,T)$ and that derivative is uniformly bounded by $R_g$ as above, so $|u(x,t)-u(x,\tau)|\leq R_g|t-\tau|$. Thus we can write 
        \begin{equation*}
            \begin{split}
                \|u(\cdot, \tau_k)-\tilde{u}_0\|&\leq \|u(\cdot, \tau_k)-u(\cdot, t_n)\|_\infty+\|u(\cdot,t_n)-\tilde{u}_0\|_\infty\\
                &\leq R_g|\tau_k-t_n|+\|u(\cdot,t_n)-\tilde{u}_0\|_\infty
            \end{split}
        \end{equation*}
        Both $\tau_k$ and $t_n$ approach $T$ from the left and so if $\epsilon>0$ there is an $n_1$ such that $k,n>n_1\implies |t_k-t_n|<\frac{\epsilon}{2R_g}$. Furthermore, we already showed that $\|u(\cdot,t_n)-\tilde{u}_0\|\rightarrow 0$ so there is an $n_2$ such that $ n>n_2\implies \|u(\cdot,t_n)-\tilde{u}_0\|_\infty \leq \frac{\epsilon}{2}$. Therefore we get that when $k>\max\{n_1,n_2\}$ then surely
        
       \[\|u(\cdot, \tau_k)-\tilde{u}_0\|\leq \epsilon.\]
        This is true for any sequence of $\tau\rightarrow T$ from the left thus we write that $\lim_{t\to T^-}u(\cdot,t)=\tilde{u}_0\in C^0_b(\Omega)$.

        Because we have a $\tilde{u}_0\in C^0_b(\Omega)$, by theorem \ref{THM:ShortTimeExistenceAndUniqueness} we can find a solution $\tilde u$ to the IVP $\tilde u=g[\tilde u]$ with $\tilde u(\cdot, T)=\tilde u _0$ on some interval $[T,T+\eta).$ Now let $\hat u(\cdot, t) = u(\cdot,t)$ when $t\in [0,T)$ and $\hat u(\cdot, t)=\tilde u(\cdot ,t)$ when $t\in [T,T+\eta)$. We will show now that $\hat u\in C^0_b(\Omega_{T+\eta}).$ The boundedness is immediate from the hypothesis and from the details of the proof of theorem \ref{THM:ShortTimeExistenceAndUniqueness}. Also, we know that $\hat{u}$ is continuous on $[0,T)$ and on $(T,T+\eta)$. To see that is also continuous at $T$ observe that $\lim_{t\rightarrow T^-}\hat{u}(\cdot, t)=\hat{u}(\cdot, T)$ by the previous result and that $\lim_{t\rightarrow T^+}\hat{u}(\cdot, t)=\hat{u}(\cdot, T)$ by the details in the proof of \ref{THM:ShortTimeExistenceAndUniqueness}. This convergence is uniform and so we can say that  $\hat u\in C^0_b(\Omega_{T+\eta}).$
        
        Lastly, we show that $\hat{u}$ is a solution to the IVP, that is, in the integral form, 
        
       \begin{equation}\label{integralequation}\hat{u}(x,t)=u_0(x)+\int_0^tg[\hat{u}](x,s)ds.\end{equation}
        This is obviously true when $t<T$. Moreover we can see that when $t=T$ we can write
        \begin{equation}\label{corEq2}
            \begin{split}
            u_0(x)+\int_0^Tg[\hat{u}](x,s)ds&=u_0(x)+\int_0^{T-\epsilon}g[{u}](x,s)ds+\int_{T-\epsilon}^Tg[{u}](x,s)ds\\
                &=u(x,T-\epsilon)+\int_{T-\epsilon}^Tg[u](x,s)ds
            \end{split}
        \end{equation}
        Recall that, because $\|\hat{u}\|_\infty=\|u\|_\infty\leq R$ when $t<T$,  $\|g[\hat{u}]\|_\infty=\|g[u]\|_\infty\leq R_g$ when $t<T$. Therefore if we take the limit as $\epsilon \rightarrow 0$ in equation \eqref{corEq2} we get $u_0(x)+\int_0^Tg[u](x,s)ds=\lim_{t\rightarrow T^-}(x,t)=\tilde{u}_0(x)=\hat{u}(T)$. 

        Finally, when $t>T$ we observe that 
        \begin{equation*}
            \begin{split}
                u_0(x)+\int_0^tg[\hat{u}](x,s)ds&= u_0(x)+\int_0^Tg[\hat{u}](x,s)ds+\int_T^tg[\hat{u}](x,s)ds\\
                &=\tilde{u}_0+\int_T^tg[\tilde{u}](x,s)ds\\
                &=\tilde{u}(x,t)=\hat{u}(x,t)
            \end{split}
        \end{equation*}

        Therefore $\hat{u}\in C^0_b(\Omega_{T+\eta})$ and $\hat{u}$ satisfies that integral equation \eqref{integralequation}. Thus $\hat{u}$ extends the solution to the IVP $u_t=g[u]$ with $u(\cdot,0)=u_0$ beyond the assumed maximal time of existence. This contradicts the assumption that $T$ was the maximal time of existence. Thus, if there is a finite maximal time of existence, the solution must leave every compact subset of $C^0_b(\Omega_T)$. Because the limit $u(\cdot,t_n)$ is continuous for $t_n\leq T<\infty$ we know that $\|u(\cdot, t)\|_\infty\to \infty$ as $t\to T$.
    \end{proof}
    Note that this result only discusses the consequences of having a finite time of existence. It is made less interesting by the fact that we have no example of a solution which exhibits finite time blowup. 
    These results are for general continuous extensions of Toeplitz games with appropriate $\rho$. These general games are interesting but we can achieve more specific results when we constrain our study to Toeplitz coordination games.

\subsection{Coordination Toeplitz Games}\label{SUBSEC:CoordinationToeplitzGames}
In the discrete case, if a Toeplitz game has a positive Strictly Diagonally Dominant (SDD) payoff matrix then it is necessarily a coordination game. We can see this because, for any mixed strategy $t\in \Delta^{m-1}$, the payoff of playing a pure strategy $k\in \{1,...,m\}$ is $w(e_k,t)=e_k^TAt$. It is immediate to see that if $k$ is such that $t_k=0$ then ${e_k^{T}At}=\sum_{j=1}^ma_{kj}t_j<a_{kk}\|t\|_{\ell^\infty}$ and if $l$ is such that $t_l=\|t\|_{\ell^\infty}$ then ${e_l^{T}At}=\sum_{j=1}^ma_{lj}t_j\geq a_{ll}t_l=a_{kk}\|t\|_{\ell^\infty}$. Therefore if $k$ is not in the support of a strategy $t$ it cannot be a pure strategy best response to $t$ and so $BR(t)\subset C(t)$. 

Although the translation between an SDD matrix and the function $\rho$ used to describe pairwise payoff is not clear, we consider the continuous coordination Toeplitz game as games where
\begin{equation}\label{coordinationcondition}
\rho'(z)\begin{cases}\leq0&z>0\\
=0&z=0\\
\geq 0 & z<0\end{cases}
\end{equation} 
This implies that $\rho$ achieves its global maximum at $z=0$ and so the pairwise interaction with pure strategies governed by such a $\rho$ will satisfy the bandwagon property (and thus, this is indeed a coordination game). This restriction on $\rho$ allows us to think more specifically about how coordination behavior evolves in time. 

It is appropriate to make the comparison between the continuous Toeplitz coordination games and nonlocal diffusion equations, which are well covered in \cite{NonlocalDiffusionProblems}. Indeed, if we choose $\rho(z)=\frac{-1}{2}z^2$ our coordination equation becomes exactly the linear nonlocal diffusion equation $u_t=\int_\Omega K(x,y)(u(y)-u(x))dy$. Moreover, regardless of our choice of $\rho$, if we assume it is even and $C^{1,1}$, near zero $\rho'(z)\approx-z.$ Because of this similarity, we will expect some of the same behavior as the nonlocal diffusion equation. In particular, we will see that there is a weak maximum principle, and thus solutions to the IVP will exist globally in forward time. In the same way, it is appropriate to compare a continuous Toeplitz anti-coordination game to the backward nonlocal diffusion equation.

\begin{lemma}[Weak Maximum Principle]\label{LEM:MaximumPrinciple}
        If $u$ solves $u_t = g[u]$ in $\Omega_T$ with $u(\cdot, 0)=u_0\in C^0_b(\Omega)$ and if $\rho$ satisfies \eqref{coordinationcondition}, then 
       \[\|u(\cdot, t_2)\|_\infty\leq\| u(\cdot , t_1)\|_\infty \] whenever $t_1\leq t_2$. 
    \end{lemma}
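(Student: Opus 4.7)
The plan is to study the functions $\phi(t) := \sup_{x \in \Omega} u(x,t)$ and $\psi(t) := \inf_{x \in \Omega} u(x,t)$ separately and show that $\phi$ is non-increasing while $\psi$ is non-decreasing. Since $\|u(\cdot,t)\|_\infty = \max(\phi(t),-\psi(t))$, the lemma then follows. By the symmetry of condition \eqref{coordinationcondition} about zero, the two arguments differ only in signs, so I will focus on $\phi$.

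Both $u$ and $\phi$ are Lipschitz in $t$ with constant $R_g := \|g[u]\|_\infty$ (bounded by Lemma \ref{LEM:welldefined}), so it suffices to show the upper right Dini derivative $D^+\phi(t) \leq 0$ for all $t$; any locally Lipschitz function satisfying this is non-increasing. The estimate rests on two preparatory facts. First, applying the argument of Lemma \ref{LEM:Lipschitzg} in the time variable, using that $u$ is Lipschitz in $t$ uniformly in $x$, shows that $g[u]$ is also Lipschitz in $t$ uniformly in $x$, so that for each $x$ and small $h$
\[ u(x,t+h) \;=\; u(x,t) + h\,g[u](x,t) + r_h(x), \qquad |r_h(x)|\leq Ch^2, \]
with $C$ independent of $x$. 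Second, and this is where \eqref{coordinationcondition} is crucial, for any $x$ satisfying $u(x,t) \geq \phi(t) - \delta$ one has $g[u](x,t) \leq L_\rho D\,\delta$: split the defining integral according to whether $u(y,t) \leq u(x,t)$ or not; on the first set the integrand is $\leq 0$ by \eqref{coordinationcondition}, while on the second $u(x,t) - u(y,t) \in [-\delta, 0)$, so $|\rho'(u(x,t)-u(y,t))| \leq L_\rho\delta$ since $\rho'(0) = 0$ and $\rho'$ is Lipschitz.

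The last ingredient is a Danskin-type step to convert these pointwise bounds into a bound on $D^+\phi(t)$, since $\phi$ need not be attained. For each $h > 0$, choose $x_h \in \Omega$ with $u(x_h, t+h) > \phi(t+h) - h^2$. Combining the Lipschitz bounds on $u$ and on $\phi$ in $t$ gives $u(x_h,t) \geq \phi(t) - \delta_h$ with $\delta_h := 2R_g h + h^2 \to 0$. Substituting the expansion above into $\phi(t+h) < u(x_h,t+h) + h^2$ and using $u(x_h,t) - \phi(t) \leq 0$,
\[ \phi(t+h) - \phi(t) \;<\; h\,g[u](x_h,t) + (C+1)h^2 \;\leq\; h L_\rho D\,\delta_h + (C+1)h^2. \]
Dividing by $h$ and letting $h \to 0^+$ (noting $\delta_h \to 0$) yields $D^+\phi(t) \leq 0$.

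The main obstacle is the possible non-attainment of $\phi$, which blocks a direct pointwise maximum-principle argument; the Danskin-type trick through approximate maximizers $x_h$ is what allows one to still benefit from the sign information at near-maxima. A related subtlety is that the upper bound on $g[u](x,t)$ at a $\delta$-approximate maximum degenerates as $\delta \to 0$, but so does $\delta_h$, and tracking both carefully in $h$ is precisely what produces the vanishing Dini bound after dividing by $h$.
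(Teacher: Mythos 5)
Your proof is correct, but it takes a genuinely different route from the paper's. The paper perturbs the solution to $\tilde{u}=u-\epsilon t$, which satisfies $\tilde{u}_t=g[\tilde{u}]-\epsilon$, and derives a contradiction at a first space--time point $(x^\star,t^\star)$ where $\tilde{u}$ touches the level $\|u_0\|_\infty+\epsilon$: at such a point $g[\tilde u]\leq 0$ by \eqref{coordinationcondition}, forcing $\partial_t\tilde u\leq-\epsilon<0$, while first-touching forces $\partial_t\tilde u\geq 0$; letting $\epsilon\to 0$ gives the upper bound. You instead work directly with $\phi(t)=\sup_x u(x,t)$, prove the quantitative degeneration bound $g[u](x,t)\leq L_\rho D\delta$ at $\delta$-approximate maximizers, and combine it with a second-order-in-time expansion and approximate maximizers $x_h$ to get $D^+\phi\leq 0$, then invoke the standard monotonicity criterion for continuous functions with nonpositive upper Dini derivative. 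What your approach buys is robustness to non-attainment: the paper's argument tacitly assumes that the touching level is actually achieved at some $x^\star\in\Omega$ at a first time $t^\star$, which is not automatic on an open or unbounded domain (the near-maximizers could escape to the boundary or to infinity), and the $\epsilon t$ perturbation is precisely the paper's device for sidestepping part of this; your Danskin-type argument handles it head-on at the cost of the extra bookkeeping in $h$ and $\delta_h$. You also explicitly treat the lower bound via $\psi(t)=\inf_x u(x,t)$ and the identity $\|u(\cdot,t)\|_\infty=\max(\phi(t),-\psi(t))$, a step the paper leaves implicit (its displayed conclusion only bounds $u$ from above). All the ingredients you use are available: the uniform-in-$x$ time-Lipschitz bound on $g[u]$ follows from Lemma \ref{LEM:Lipschitzg} applied to $u(\cdot,t)$ and $u(\cdot,s)$, and the boundedness needed for $R_g$ and $L_\rho$ holds on any compact time interval inside the existence interval.
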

    \begin{proof}
        Notice that it is no loss of generality to assume $t_1=0$, so we will prove that $\|u(\cdot,t)\|_\infty\leq \|u_0\|_\infty$ for all $t\in [0,T)$ where $T$ is the maximal existence time which may be infinite. Let $\epsilon>0$ and let $v=\|u_0\|_\infty +\epsilon$. Now observe that if $u$ solves $u_t=g[u]$ and $\tilde{u}:=u-\epsilon t$, then $\tilde{u}_t=g[\tilde{u}]-\epsilon$. This is because $g[u-\epsilon t]=g[u]$ (More generally, $g$ is invariant under vertical shifts, even if they are time-dependent). Suppose that $\tilde{u}(x^\star,t^\star) = v$ for the first time at some $x^\star\in \Omega$ and some $t^\star>0$. That means that when  $t<t^\star$, $\tilde{u}(y,t)<v$ for all $y\in \Omega$ and by continuity of $u$, $\tilde{u}(y,t^\star)\leq v$ for all $y\in\Omega$. Using the fact that $\rho'(z)\leq 0$ when $z\geq 0$, we obtain that    
        \[g[\tilde{u}](x^\star,t^\star)=\int_\Omega K(x,y)\rho'(\tilde u(x^\star,t^\star)-\tilde u(y,t^\star))dy\leq 0, \] 
        and therefore, $\partial_t\tilde u(x^\star,t^\star)\leq 0-\epsilon<0$. However, because $\tilde{u}$ is continuously differentiable in time (see proof of theorem \ref{THM:ShortTimeExistenceAndUniqueness}) and because $\tilde u=v$ for the first time at $t=t^\star$ we can say $\partial_t\tilde u(x^\star,t^\star)\geq 0$. This is a contradiction so we can say surely that $\tilde{u} < v$ which means $u< \|u_0\|_\infty+\epsilon+\epsilon t$. This is true for any $\epsilon$ so let $\epsilon\rightarrow 0$ and we see that $\|u(\cdot, t)\|_\infty<\|u_0\|_\infty$ for all finite time. This proves the result.
    \end{proof}

    This is tremendously helpful because it will allow us to give a uniform lower bound on existence time if we seek to extend solutions forward in time. Because we know that, if we select an $R\in \mathbb{R}$, a solution to the initial value problem with $\rho$ satisfying \eqref{coordinationcondition} will exist on a time interval $[0,T)$ and surely have $\|u(\cdot,T-\epsilon)\|_\infty<R$ and so it will be in the same closed subset of $C^0_b(\Omega_T)$, $E_{R,T}=\{u\in C^0_b(\Omega_T);u(\cdot,t)=u_0, \|u\|_\infty\leq R\}$. Because the provable existence time depended only on the forms of $K$, $\rho$, and the bound $R$, when we consider a coordination game, we can extend by the same amount of time in each iteration. We can repeat the process indefinitely to get global existence. Global existence can be proven this way, but we may also prove the same result just by considering the previous two lemmas.

    \begin{theorem}[Global existence and uniqueness with particular $\rho\in C^{1,1}$]\label{THM:GlobalExistenceAndUniqueness}
    Let $\rho\in C^{1,1}(\mathbb{R})$ satisfy \eqref{coordinationcondition}. Under this strengthened hypothesis, the Initial Value Problem $u_t=g[u]$ with $u(x,0)=u_0\in C_b^0(\Omega)$ has a unique continuous and bounded solution for all finite time.
    \end{theorem}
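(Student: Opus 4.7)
The plan is to combine the three preceding results: short-time existence and uniqueness (Theorem \ref{THM:ShortTimeExistenceAndUniqueness}), the blowup alternative (Lemma \ref{LEM:FiniteTimeBlowup}), and the weak maximum principle (Lemma \ref{LEM:MaximumPrinciple}). The argument is a standard contradiction on the maximal time of existence; under the coordination hypothesis \eqref{coordinationcondition} nothing beyond assembling these pieces is required, and the author's own remark after Lemma \ref{LEM:MaximumPrinciple} indicates that this is the intended route.

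First I would invoke Theorem \ref{THM:ShortTimeExistenceAndUniqueness} to obtain a unique continuous bounded solution on some $\Omega_\tau$, then use the extension procedure described at the end of that proof to define $T^\star \in (0,\infty]$ as the maximal time of existence, i.e.\ the supremum of all $T > 0$ for which the IVP admits a continuous bounded solution on $\Omega_T$. Uniqueness on overlapping subintervals (again by Theorem \ref{THM:ShortTimeExistenceAndUniqueness}) guarantees that the resulting $u$ is unambiguously defined on $[0,T^\star)$.

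Second, I would suppose for contradiction that $T^\star < \infty$. By Lemma \ref{LEM:MaximumPrinciple}, the coordination hypothesis \eqref{coordinationcondition} yields the a priori bound $\|u(\cdot,t)\|_\infty \leq \|u_0\|_\infty$ uniformly for every $t \in [0, T^\star)$ (the lemma applies on each $\Omega_T$ with $T < T^\star$, hence pointwise on the full maximal interval). On the other hand, Lemma \ref{LEM:FiniteTimeBlowup} asserts that a finite maximal time of existence forces $\|u(\cdot,t)\|_\infty \to \infty$ as $t \to T^\star$. These two conclusions are incompatible, so $T^\star = \infty$. Uniqueness on any $\Omega_T$ with $T < \infty$ is then immediate from the uniqueness clause of Theorem \ref{THM:ShortTimeExistenceAndUniqueness} applied on $[0,T]$.

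The main obstacle is essentially absent here: all the analytic work was carried out in the preceding lemmas, and the coordination condition enters only to activate the maximum principle, which is precisely what rules out the blowup alternative. The single point worth articulating carefully is that the sup-norm bound from Lemma \ref{LEM:MaximumPrinciple} transfers from each truncated interval to the open maximal interval $[0, T^\star)$, which is purely bookkeeping since the bound is the same constant $\|u_0\|_\infty$ on every such subinterval.
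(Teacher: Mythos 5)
Your proposal is correct and follows essentially the same route as the paper's proof: the weak maximum principle (Lemma \ref{LEM:MaximumPrinciple}) confines the solution to the ball of radius $\|u_0\|_\infty$ in $C^0_b(\Omega)$, which contradicts the blowup alternative of Lemma \ref{LEM:FiniteTimeBlowup} if the maximal existence time were finite. The only difference is that you spell out the definition of the maximal time and the uniqueness bookkeeping slightly more explicitly, which the paper leaves implicit.
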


    \begin{proof}
        Observe that for any time $t$, the solution $u(\cdot t)$ must be in the compact subset 
        \[E_0:=\{u\in C_b^0(\Omega);\|u\|_\infty\leq \|u_0\|_\infty\}\subset C^0_b(\Omega)\]
        because of lemma \ref{LEM:MaximumPrinciple}. If the maximal time of existence is $T<\infty$ then it must leave this compact subset before time $T$ by lemma \ref{LEM:FiniteTimeBlowup}. This is a contradiction so $u$ cannot have a finite maximal time of existence. 
    \end{proof}

    Lemma \ref{LEM:MaximumPrinciple} and Theorem \ref{THM:GlobalExistenceAndUniqueness} are consistent with our understanding of the coordination game in the discrete case. For multiplayer coordination, it is an easy extension of the bandwagon property to say that innovation outside of the support of the current strategy profile is never a best response. Indeed, lemma \ref{LEM:MaximumPrinciple} is the continuous version of the Weak Bandwagon Property of \cite{cui2022}, which says that it is never optimal for an individual to take on a strategy not used by any opponent in a multiplayer coordination game. 

    The boundedness of solutions for the coordination game enables us to give a weak regularity result.  This result will be especially important when we attempt to approximate solutions though numerical methods.

    \begin{theorem}[Regularity for the coordination game without boundary conditions]\label{THM:Regularity}
        Let $\Omega\subseteq \mathbb{R}^n$. Suppose $u$ solves the Initial Value Problem $u_t=g[u]$ with the coordination assumption \eqref{coordinationcondition} and with $u_0\in C^{0,1}_b(\Omega)$ with a uniform Lipschitz constant $L_0$, If $K$ is uniformly Lipschitz in the first variable with respect to the $L^1$ norm, then for any finite time $t$, $u(\cdot, t)\in C^{0,1}(\Omega)$ . Moreover, if $\Omega$ is open and bounded with a $C^1$ boundary,  $u\in W^{1,\infty}(\Omega)$. Furthermore, for some positive $c$ and $C$, 
        
       \[\|D_xu(\cdot, t)\|_{L^\infty(\Omega)}\leq (L_0+Ct)e^{ct} \]
        Where $C$ depends only on $K$ and $u_0$ and $c$ depends only on $K$ and $\rho$.
    \end{theorem}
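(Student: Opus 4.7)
The plan is to establish a Grönwall-type inequality for the Lipschitz seminorm $L(t) := \sup_{x_1 \neq x_2 \in \Omega} |u(x_1,t)-u(x_2,t)|/|x_1-x_2|$ and then invoke the standard identification of Lipschitz functions with $W^{1,\infty}$ on regular domains. Starting from the integral form
\[u(x_1,t) - u(x_2,t) = u_0(x_1) - u_0(x_2) + \int_0^t \bigl(g[u](x_1,s) - g[u](x_2,s)\bigr)\, ds,\]
I would split the difference inside the nonlocality as
\begin{align*}
g[u](x_1,s) &- g[u](x_2,s) = \int_\Omega \bigl[K(x_1,y)-K(x_2,y)\bigr]\rho'(u(x_1,s)-u(y,s))\,dy \\
&\quad + \int_\Omega K(x_2,y)\bigl[\rho'(u(x_1,s)-u(y,s)) - \rho'(u(x_2,s)-u(y,s))\bigr]\,dy.
\end{align*}

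The first term is controlled using the hypothesis $\|K(x_1,\cdot)-K(x_2,\cdot)\|_{L^1(\Omega)} \leq L_K |x_1-x_2|$, together with a uniform bound $|\rho'| \leq W$ on the interval $[-2\|u_0\|_\infty, 2\|u_0\|_\infty]$ guaranteed by the weak maximum principle (Lemma \ref{LEM:MaximumPrinciple}). The second term is controlled using the fact that the same maximum principle provides a single Lipschitz constant $L_\rho$ for $\rho'$ on that interval, independent of time, yielding the pointwise bound $L_\rho |u(x_1,s) - u(x_2,s)|$, together with $\|K(x_2,\cdot)\|_{L^1(\Omega)} \leq D$. Combining and dividing by $|x_1-x_2|$ produces
\[\frac{|u(x_1,t)-u(x_2,t)|}{|x_1-x_2|} \leq L_0 + W L_K t + L_\rho D \int_0^t \frac{|u(x_1,s)-u(x_2,s)|}{|x_1-x_2|}\,ds\]
for each fixed pair $x_1 \neq x_2$; taking the supremum on both sides and applying the integral form of Grönwall's inequality (with nondecreasing driving term $L_0 + W L_K t$) delivers exactly $L(t) \leq (L_0 + Ct)e^{ct}$ with $C = W L_K$ (depending only on $K$ and $u_0$) and $c = L_\rho D$ (depending only on $K$ and $\rho$). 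The $W^{1,\infty}$ conclusion then follows because on a bounded open set with $C^1$ boundary, Lipschitz continuity is equivalent to membership in $W^{1,\infty}$, with the Lipschitz seminorm coinciding with the essential supremum of the weak gradient (Rademacher's theorem plus extension by reflection).

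The main technical subtlety, and the step that merits the most care, is the legitimacy of the passage from the pointwise inequality to the integral inequality for $L(t)$: the pointwise inequality holds for every fixed $(x_1,x_2)$ with a right-hand side bounded by $L(s)$ at each $s$, so monotonicity of integration lets us replace the ratio inside the integral by $L(s)$ before taking the supremum on the left. The other essential observation, which is where the coordination hypothesis \eqref{coordinationcondition} genuinely enters, is that Lemma \ref{LEM:MaximumPrinciple} is precisely what supplies the time-uniform bound on $u$ and hence fixes the compact interval on which $\rho'$ and its Lipschitz constant remain controlled for all $t$; without this, both $W$ and $L_\rho$ could a priori grow in time and a global-in-time estimate of the asserted form could not be obtained.
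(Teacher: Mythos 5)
Your proposal is correct and follows essentially the same route as the paper: the same splitting of $g[u](x_1,s)-g[u](x_2,s)$ into a kernel-difference term (controlled by the $L^1$-Lipschitz hypothesis on $K$ and the bound $W$ on $\rho'$) and a $\rho'$-difference term (controlled by $L_\rho$ and $D$, both fixed uniformly in time by the weak maximum principle), followed by Gr\"onwall and the Lipschitz--$W^{1,\infty}$ identification. The only cosmetic difference is that the paper works with $\phi(h,x,t)=u(x+h,t)-u(x,t)$ at fixed $h$ and applies Gr\"onwall before dividing by $|h|$, which sidesteps the need to know a priori that your supremal seminorm $L(s)$ is finite; your version is equally valid if you likewise apply Gr\"onwall pair-by-pair and take the supremum only at the end.
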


    \begin{proof}
        Let $\phi(h,x,t)=u(x+h,t)-u(x,t)$ for some $h\in  \mathbb{R}^n$ so that both $x$ and $x+h$ are in $\Omega$  and observe that by simply subtracting the two solutions from one another.
        \begin{equation}\label{regproofeq1}
            \phi(h,x,t)=\phi(h,x,0)+\int_0^t\left(g[u](x+h,s)-g[u](x,s)\right)ds
        \end{equation}
        Let $L_\rho$ be the Lipschitz constant for $\rho'$ on the interval $[-2\|u_0\|_\infty,2\|u_0\|_\infty]$. Recall by lemma \ref{LEM:MaximumPrinciple} that $u\in [-\|u_0\|_\infty,\|u_0\|_\infty]$ for all $x,t\in \Omega_T$ so surely $u(x,t)-u(y,t)\in [-2\|u_0\|_\infty,2\|u_0\|_\infty]$ regardless of $x$ and $y$. Also note that $\rho'$ is bounded by some $W$ on the same interval. Now observe that this integrand is
        \begin{equation*}
            \begin{split}
                |g[u]&(x+h,s)-g[u](x,s)|\\
                &=\bigg|\int_\Omega K(x+h,y)\rho'(u(x+h,s)-u(y,s))-K(x,y)\rho'(u(x,s)-u(y,s))dy\bigg|\\
                &\leq \int_\Omega K(x+h,y)|\rho'(u(x+h,s)-u(y,s))-\rho'(u(x,s)-u(y,s))|dy\\
                &\quad \quad +\int_\Omega [K(x+h,y)-K(x,y)]|\rho'(u(x,s)-u(y,s))|ds\\
                &\leq \int_\Omega K(x+h,y)L_\rho|u(x+h,s)-u(x,s)|dy\\
                &\quad\quad+W\|K(x+h,\cdot)-K(x,\cdot)\|_{L^1(\Omega)}\\
                &\leq L_\rho\int_\Omega K(x+h,y)|\phi(x,h,s)|dy+WL_Kh\\
                & \leq L_\rho\sup_{z\in \Omega}\|K(z,\cdot)\|_{L^1(\Omega)}\|\phi(h,\cdot,s)\|_{\infty}+WL_kh\\
            \end{split}
        \end{equation*}
        let $c=L_\rho\sup_{z\in \Omega}\|K(z,\cdot)\|_{L^1(\Omega)}$ and $C=WL_k$ and make this replacement into equation \eqref{regproofeq1}  and take the sup norm over $\Omega$ to see that
        
       \[\|\phi(h,\cdot,t)\|_\infty \leq \|\phi(h,\cdot,0)\|_\infty +\int_0^t[c\|\phi(h,\cdot, s)\|_\infty +Ch]ds\]
        
        It is an easy application of Gr\"onwall's inequality to see that
        \begin{equation*}
             \|\phi(h,\cdot,t)\|_\infty
             \leq(\|\phi(h,\cdot, 0)\|_\infty+Cht)e^{ct}
        \end{equation*}
        Note also that $\|\phi(h,\cdot,0)\|_\infty\leq |h|L_0$, so the difference from any $x\in \Omega$ on any finite time interval $[0,T]$ is controlled by
        
        \[
            |u(x,t)-u(x+h,t)|\leq |h|(L_0+Ct)e^{cT}
        \]

        Notice that we have shown that for each $t$, $u(\cdot,t)$ is globally Lipschitz continuous in $\Omega$ with Lipschitz constant $L_T=(L_0+CT)e^{cT}$. Note that this constant is uniform in $t$. This means, when $\Omega$ is open and bounded, we can use the characterization of $W^{1,\infty}(\Omega)$ \cite{evansPDE} to conclude that $u$ is weakly differentiable and the weak derivative is bounded by the Lipschitz constant.
        
         \[\|D_xu(\cdot,t)\|_{L^\infty(\Omega)}\leq L_T=(L_0+Ct)e^{cT}\]
         
    \end{proof}

    Note that, in the above proof, the Lipschitz continuity of $u$ holds without restrictions on the domain. Indeed, if $\Omega$ is not bounded we have a global Lipschitz constant. Also note that in any open domain $\Omega$ and for each $t,$ $u(\cdot, t)\in W^{1,\infty}_{loc}(\Omega)$.

    \begin{corollary}
        For $\Omega \subseteq \mathbb{R}^n$, suppose that $u$ solves the IVP $u_t=g[u]$ under the assumptions in theorem \ref{THM:Regularity} on a finite time domain $[0,T]$. In this case, $u$ is globally Lipschitz in $\Omega\times[0,T]$
    \end{corollary}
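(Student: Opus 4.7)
The plan is to combine the spatial Lipschitz estimate already established in Theorem \ref{THM:Regularity} with a uniform-in-space Lipschitz bound in time, and then glue them together via the triangle inequality. The spatial piece is free: Theorem \ref{THM:Regularity} gives a constant $L_T = (L_0+CT)e^{cT}$ such that $|u(x,t) - u(y,t)| \leq L_T |x-y|$ for every $t \in [0,T]$ and every $x,y \in \Omega$.

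For the temporal piece, I would use that $u$ satisfies the integral equation $u(x,t) = u_0(x) + \int_0^t g[u](x,s)\,ds$, so for $t_1 \leq t_2$ in $[0,T]$,
\[
|u(x,t_2) - u(x,t_1)| \leq \int_{t_1}^{t_2} |g[u](x,s)|\,ds \leq \|g[u]\|_\infty \, |t_2 - t_1|.
\]
By the maximum principle (Lemma \ref{LEM:MaximumPrinciple}), $\|u(\cdot,s)\|_\infty \leq \|u_0\|_\infty$ for all $s \in [0,T]$, so $|u(x,s)-u(y,s)| \leq 2\|u_0\|_\infty$ uniformly. Since $\rho \in C^{1,1}$, $\rho'$ is bounded on $[-2\|u_0\|_\infty, 2\|u_0\|_\infty]$ by some $W$, and then from the proof of Lemma \ref{LEM:welldefined} we get $\|g[u](\cdot,s)\|_\infty \leq W D$, where $D = \sup_{x\in\Omega}\|K(x,\cdot)\|_{L^1(\Omega)}$. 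Call $L_\tau := WD$; this is independent of $x$ and $s$.

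Combining, for any $(x_1,t_1),(x_2,t_2) \in \Omega\times[0,T]$,
\[
|u(x_1,t_1) - u(x_2,t_2)| \leq |u(x_1,t_1) - u(x_2,t_1)| + |u(x_2,t_1) - u(x_2,t_2)| \leq L_T |x_1 - x_2| + L_\tau |t_1 - t_2|,
\]
and taking $L := \sqrt{2}\max(L_T,L_\tau)$ yields $|u(x_1,t_1)-u(x_2,t_2)| \leq L\,\|(x_1,t_1)-(x_2,t_2)\|$, which is the claimed global Lipschitz bound on $\Omega \times [0,T]$. There is no genuine obstacle here: the only moving part is the temporal Lipschitz estimate, and that reduces immediately to the uniform boundedness of $g[u]$, which in turn reduces to the maximum principle plus the $C^{1,1}$ hypothesis on $\rho$. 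Both ingredients are already available.
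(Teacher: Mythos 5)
Your proposal is correct and follows essentially the same route as the paper: spatial Lipschitz continuity from Theorem \ref{THM:Regularity}, temporal Lipschitz continuity from the uniform bound on $g[u]$ (via the maximum principle and the $C^{1,1}$ hypothesis on $\rho$), and the triangle inequality to combine them into a Lipschitz bound with respect to the Euclidean norm on $\Omega\times[0,T]$. No gaps.
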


    \begin{proof}
        By theorem \ref{THM:Regularity}, $u$ is Lipschitz continuous in space and the Lipschitz constant is uniform on a bounded time interval. Moreover, we know that $u(x,\cdot)$ is continuously differentiable because $u_t(x,t)=g[u](x,t)$ which is continuous in space and time by \ref{LEM:welldefined}. $g[u]$ is bounded above by a constant related to the form of $K,\rho$ and the bounds on $u$. Under the hypothesis of theorem \ref{THM:Regularity}, $\|u(\cdot, t)\|_\infty\leq \|u_0\|_\infty$ so we obtain  
        \begin{equation*}
            \begin{split}
                |u(x,t)-u(y,s)|&\leq |u(x,t)-u(y,t)|+|u(y,t)-u(y,s)|\\
                &\leq C_T|x-y|+C_{0}|t-s|\\
                &\leq L(|x-y|+|t-s|)
            \end{split}
        \end{equation*}
        Where $L$ is a constant which depends only on $T, K, \rho$ and $u_0$. Of course we can adjust this norm to say that 
        \[|u(x,t)-u(y,s)|\leq C \sqrt{|x-y|^2+|t-s|^2}\]
        so that we can express it using the euclidean norm in $\Omega_T\subset \mathbb{R}^{n+1}$
    \end{proof}

    \subsection{The Cauchy Problem with a translation invarient kernel}\label{SUBSEC:AdditionalResults}
    The results presented in the previous subsections are rather general and do not rely on heavy assumptions about the domain or the form of $K$ or $\rho$ beyond what is strictly necessary for the model to be well posed. The analysis of this model is made exceedingly difficult, however, by the nonlinear nature of the nonlocality. Because the nonlocality is both nonlinear and non-monotonic we currently have no comparison principle as in \cite{Santos2022} nor can we use Fourier analysis or semigroup theory as is the standard for linear nonlocal diffusion problems \cite{NonlocalDiffuisionandApplications,NonlocalDiffusionProblems,Kavallaris2018} to directly analyze the model. To take our analysis further we present a strengthening of the regularity in the Cauchy setting with a different assumption on $K$.
  
    Much of the existing literature on nonlocal problems focuses on the use of translation invariant or even radial kernels. In that tradition we will examine the improved result we can obtain through using a kernel of the form $K(z)\in L^{1}(\Omega)$. This assumption means that every player has the same pattern of interaction distributed spatially. In the case that $\Omega=\Rn$ we can mildly strengthen the regularity result from theorem \ref{THM:Regularity}. The purpose of this strengthening is to remove the dependence on the shape of $K$ from the Lipschitz constant. In future work, we will seek to investigate the ``zero-horizon limit" or the ``local limit" of this nonlinear nonlocal diffusion problem (i.e., the limit as the support of $K$ goes to $\{0\}$). In the case that $\rho'$ is linear, we see that the results on non-local limits from \cite{DU2015,Du2022,Mengesha2015,NonlocalDiffusionProblems,Tao2017,Tao2019} will hold. For the nonlinear case, the scaling of the kernel to achieve the non-local limit, poses a problem for the provable regularity of solutions which has, at present, prevented us from characterizing the local limit of this diffusion equation. However,  this strengthened regularity result will be crucial in this pursuit.  

    \begin{theorem}[Regularity for the coordination game with a Translation Invariant Kernel]\label{THM:CauchyRegularity}
    Consider the domain $\Rn$. Suppose that $u$ solves the Initial Value Problem $u_t=g[u]$ with the coordination assumption \eqref{coordinationcondition} and with $u_0=C_b^{0,1}(\Rn)$ with the uniform Lipschitz constant $L_0$. If $K$ is a translation invariant Kernel (i.e. $K(x,y)=J(x-y)$) then $u(\cdot, t)\in C^{0,1}(\Rn)$ for any finite time. Moreover the global Lipschitz constant for $u(\cdot, t)$ on $ \Rn$ is given as $L_0e^{cT}$ where $c$ depends only on the Lipschitz constant for $\rho'$ and on $\|u_0\|_\infty$. 
    \end{theorem}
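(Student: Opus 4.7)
The plan is to mimic the argument from Theorem \ref{THM:Regularity}, but exploit translation invariance to eliminate the term that previously required a Lipschitz assumption on $K$. As before, fix $h \in \mathbb{R}^n$, let $\phi(h,x,t) = u(x+h,t) - u(x,t)$, and subtract the integral form of the equation at $x+h$ from that at $x$ to obtain
\begin{equation*}
\phi(h,x,t) = \phi(h,x,0) + \int_0^t \big(g[u](x+h,s) - g[u](x,s)\big)\,ds.
\end{equation*}
The main new idea is that, because $K(x,y) = J(x-y)$, a change of variables $y \mapsto y+h$ in the integral defining $g[u](x+h,s)$ yields
\begin{equation*}
g[u](x+h,s) = \int_{\mathbb{R}^n} J(x-y)\,\rho'\big(u(x+h,s) - u(y+h,s)\big)\,dy,
\end{equation*}
which already has the same kernel as $g[u](x,s)$, so no ``kernel difference'' term arises.

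Subtracting and using that $\rho'$ is Lipschitz with constant $L_\rho$ on the interval $[-2\|u_0\|_\infty, 2\|u_0\|_\infty]$ (valid by Lemma \ref{LEM:MaximumPrinciple}), the integrand becomes
\begin{equation*}
\big|g[u](x+h,s) - g[u](x,s)\big| \leq L_\rho \int_{\mathbb{R}^n} J(x-y)\, |\phi(h,x,s) - \phi(h,y,s)|\,dy.
\end{equation*}
Split $|\phi(h,x,s) - \phi(h,y,s)| \leq |\phi(h,x,s)| + |\phi(h,y,s)|$ and bound both pointwise by $\|\phi(h,\cdot,s)\|_\infty$, so that the integral is controlled by $2 L_\rho \|J\|_{L^1(\mathbb{R}^n)} \|\phi(h,\cdot,s)\|_\infty$. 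Taking the supremum over $x$ and setting $c := 2 L_\rho \|J\|_{L^1(\mathbb{R}^n)}$ then gives the integral inequality
\begin{equation*}
\|\phi(h,\cdot,t)\|_\infty \leq \|\phi(h,\cdot,0)\|_\infty + c\int_0^t \|\phi(h,\cdot,s)\|_\infty\, ds.
\end{equation*}

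Finally, apply Grönwall's inequality to conclude $\|\phi(h,\cdot,t)\|_\infty \leq \|\phi(h,\cdot,0)\|_\infty e^{ct} \leq |h|\,L_0\, e^{ct}$, using the Lipschitz continuity of the initial datum. Dividing by $|h|$ and taking the supremum over $x$ and $h$ gives the global Lipschitz estimate with constant $L_0 e^{cT}$. The constant $c$ depends only on $L_\rho$ (which in turn depends only on $\rho$ and $\|u_0\|_\infty$) and $\|J\|_{L^1}$, as claimed. I do not anticipate a serious obstacle here; the one conceptual point to highlight is the change of variables, which is the precise reason the extra term $Cht$ in Theorem \ref{THM:Regularity} (coming from the kernel's own Lipschitz constant) is no longer present. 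This cleaner estimate is what will be needed for the nonlocal-to-local limit analysis mentioned in the preceding discussion, since the suppressed dependence on the shape of $K$ lets one rescale $J$ freely without losing control of the Lipschitz norm.
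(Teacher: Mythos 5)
Your proof is correct, and it takes a cleaner route than the paper's. The paper splits $g[u](x+h,s)-g[u](x,s)$ into two pieces: a term $I_1$ where the kernel is held fixed and $\rho'$ is differenced, and a term $I_2$ containing the kernel difference $J(x+h-y)-J(x-y)$; the change of variables is applied only inside $I_2$, which produces a bound of the form $L_\rho\|\phi(-h,\cdot,s)\|_\infty$ involving the \emph{opposite} increment. To close the Gr\"onwall loop the paper must then introduce the direction-symmetrized quantity $\psi(r,s)=\sup_{x,\theta}\phi(r,\theta,x,s)$ so that both $\|\phi(h,\cdot,s)\|_\infty$ and $\|\phi(-h,\cdot,s)\|_\infty$ are controlled by the same function. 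Your idea of shifting $y\mapsto y+h$ in the whole of $g[u](x+h,s)$ at the outset collapses both contributions into the single integrand $J(x-y)\,[\rho'(u(x+h,s)-u(y+h,s))-\rho'(u(x,s)-u(y,s))]$, whose arguments differ by exactly $\phi(h,x,s)-\phi(h,y,s)$; this yields the one-sided Gr\"onwall inequality directly and makes $\psi$ unnecessary. Both arguments land on the same constant $c=2L_\rho$ once $\|J\|_{L^1}$ is normalized to $1$ (the paper does this normalization without loss of generality at the start; if you skip it, your $c$ carries the harmless extra factor $\|J\|_{L^1}$, which is why you list it as a dependence while the theorem statement does not). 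The only point to be slightly careful about, common to both proofs, is that $s\mapsto\|\phi(h,\cdot,s)\|_\infty$ is bounded and measurable so that Gr\"onwall applies; this follows from the continuity and uniform boundedness of $u$ guaranteed by Theorem \ref{THM:GlobalExistenceAndUniqueness} and Lemma \ref{LEM:MaximumPrinciple}.
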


    \begin{proof}
        Without loss of generality, we normalize the kernel $J(z)$ so that $\|J\|_{L^{1}(\Rn)}=1$.
        We will proceed with this proof in much the same way as theorem \ref{THM:Regularity}. As before we let $\phi(h,x,t) = u(x+h,t)-u(x,t)$ for some $h\in \mathbb{R}^n$ with some magnitude $r$ and a bearing $\theta \in S^{n-1}$ and again observe the equation \eqref{regproofeq1} holds. 

        Let $L_\rho$ be the Lipschitz constant for $\rho'$ on the interval $[-2\|u_0\|_\infty, 2\|u_0\|_\infty]$ and by lemma \ref{LEM:MaximumPrinciple} we know that $u\in [\|u_0\|_\infty, \|u_0\|_\infty]$ for all $x,t\in \Omega_T$. We take the same computation as before to find that the quantity 
        \begin{equation*}
            \begin{split}
               &|g[u](x+h,s)-g[u](x,s)|\\ \leq &\underbrace{\int_{\Rn} J(x+h-y)|\rho'(u(x+h,s)-u(y,s))-\rho'(u(x,s)-u(y,s))|dy}_{I_1}\\
                & \quad +\underbrace{\int_{\Rn} |(J(x+h-y)-J(x-y))\rho'(u(x,s)-u(y,s))|dy}_{I_2} 
            \end{split}
        \end{equation*}
        As before, $I_1\leq L_\rho\|J(x-\cdot)\|_{L^1(\Rn)}\|\phi(h,\cdot,s)\|_\infty =L_\rho\|\phi(h,\cdot, s)\|_\infty$. For $I_2$ we can see that
        \begin{equation*}
            \begin{split}
                I_2&=\bigg|\int_{\Rn} [J(x+h-y)-J(x-y)]\rho'(u(x,s)-u(y,s))dy\bigg|\\
                &=\bigg|\int_{\Rn} J(x+h-y)\rho'(u(x,s)-u(y,s))dy-\int_{\Rn} J(x-y)\rho'(u(x,s)-u(y,s))dy\bigg|\\
                &=\bigg|\int_{\Rn+h} J(x-y)\rho'(u(x,s)-u(y-h,s))dy-\int_{\Rn} J(x-y)\rho'(u(x,s)-u(y,s))dy\bigg|\\
                &=\bigg|\int_{\Rn} J(x-y)[\rho'(u(x,s)-u(y-h,s))-\rho'(u(x,s)-u(y,s))]dy\bigg|\\
                &\leq \int_{\Rn} J(x-y)L_\rho\big|u(x,s)-u(y-h,s)-u(x,s)+u(y,s)|dy\\
                &\leq L_\rho\|J\|_{L^1(\Rn)}\|u(\cdot -h,s)-u(\cdot, s)\|_{L^\infty(\Rn)}\\
                &\leq L_\rho \|\phi(-h,\cdot,s)\|_{L^\infty(\Rn)}
            \end{split}
        \end{equation*}

    Now instead of considering only the supremum of $\phi(-h,x,s)$ for $x\in \Rn$, we will decompose $h$ into $r$ and $\theta$ and consider 
    
   \[\psi(r,s):=\sup_{x,\theta\in \Rn\times S^{n-1}}\phi(r,\theta,x,s).\]
    From our inequalities on $I_1$ and $I_2$ We can see that 
    
   \[\phi(h,\cdot, t)\leq \phi(h,\cdot, 0)+\int_0^tL_\rho\|\phi(h,\cdot,s)\|_{L^\infty(\Omega)}ds+\int_0^tL\|\phi(-h,\cdot, s)\|_{L^\infty(\Omega)}ds\]
    So we can make the replacement that 
    
   \[\psi(r,t)\leq \psi(r,0)+2L_\rho\int_0^t\psi(r,s)ds\]
    From here we can use our standard Gr\"onwall's inequality to see that 
    
   \[\psi(r,t)\leq \psi(r,0)e^{2L_\rho t}\]
    Recall that $\psi(r,0)\leq rL_0$ and so 
    
   \[|u(x,t)-u(x+h,t)|\leq |h|L_0e^{2L_\rho t}.\] 
    Therefore, for each $t$, $u(\cdot, t)$ is Lipschitz continuous for any finite time and the global Lipschitz constant is $L_0e^{2L_\rho T}$.
    \end{proof}

    In this section we have shown that this model has unique solutions and, for particular $\rho$, those solutions exist globally and are as regular as we can expect. Unlike local diffusion models, nonlocal diffusion problems do not exhibit a smoothening of initial data so we suspect that to gain higher regularity, the regularity of the initial data would have to be increased.   
    
  \section{Analytical Examples}\label{SEC:AnalyticalExamples}
    Having shown that solutions to the IVP without boundary conditions exist and are unique, we now turn our attention to the behavior and properties of solutions. We begin by showing several examples wherein we can write down solutions easily.
    
    \begin{example}[Unstructured Coordination]\label{EX:UnstructuredCoord}
    Consider a bounded domain $\Omega$. If $\rho=-\frac{z^2}{2}$ and $K(x,y)=\frac{1}{Vol(\Omega)}$, we call this the continuous version of the unstructured coordination game (because every player interacts with every other player equally). In this case, the solution can be written down for any $u_0\in C^0_b(\Omega)$.
    
   \[u(x,t)=e^{-t}\left(u_0(x)-\fint_\Omega u_0(y)dy\right)+\fint_\Omega u_0(y)dy\] 
    
    \end{example}
    \begin{proof}
        Observe that our nonlocallity $g$ reduces to 
        \begin{equation*}
            g[u]=\frac{1}{Vol(\Omega)}\int_\Omega u(y,t)-u(x,t)dy=\fint_\Omega u(y,t)dy-u(x,t)
        \end{equation*}
        Once we show that $\fint_\Omega u(y,t)dy$ is constant in time we can solve point-wise as an ODE. Observe that 
        \begin{equation*}
        \begin{split} 
            u(x,t)&=u_0(x)+\int_0^t \left[\fint_\Omega u(y,s)dy-u(x,s)\right]ds\\
            \fint_\Omega u(x,t)dx&=\fint_\Omega u_0(x)dx+\fint_\Omega\int_0^t\left[\fint_\Omega u(y,s)dy-u(x,s)\right]dsdx\\
           \fint_\Omega u(x,t)dx&=\fint_\Omega u_0(x)dx +\int_0^t \left[\fint_\Omega \fint_\Omega u(y,s)dydx-\fint_\Omega u(x,s)dx\right]ds
            \end{split}
        \end{equation*}
        Note that $\fint_\Omega \fint_\Omega u(y,s)dydx=\fint_\Omega u(y,s)dy\fint_\Omega dx=\fint_\Omega u(y,s)dy$ and observe that this implies $\fint_\Omega u(x,t)dx=\fint_\Omega u_0(x)dx$ for all $t$. Therefore we can write our nonlocal equation as
        
       \[u_t(x,t)+u(x,t)=\fint_\Omega u_0(y)dy\] 
        Now it is a simple exercise in ODEs to see that the solution to this is 
        
       \[u(x,t)=e^{-t}\left( u_0(x)-\fint_\Omega u_0(y)dy\right)+\fint_\Omega u_0(y)dy\]

        In particular, if we adjust the initial strategy profile so that it has $\fint_\Omega u_0(y)dy=0$ then $u(x,t)=e^{-t}u_0(t)$. 
    \end{proof}
    The example is illustrative because it validates our model with the expected result in the discrete case. It is easy to see that in the unstructured case here the only equilibrium is a consensus equilibrium. Later in section \ref{SEC:ModelingResults} we will be able to extend result in the unstructured case and say that the only stationary solution (and thus the only equilibrium strategy profile) is the consensus solution whenever $\rho'$ is nonzero away from zero. This is consistent with the result of \cite{Kandori1993}, which found that in the case where all individuals interact, the consensus equilibrium is the only stable equilibrium. The consistency breaks down in the case that $\rho'$ is compactly supported, but in the discrete case all strategies are comparable and so the compact support of $\rho'$ does not have a discrete analog.

    \begin{example}[Unstructured Anti-coordination]\label{EX:UnstructuredAnticoord}
        Consider a  bounded domain $\Omega$ with $\rho(x)=\frac{z^2}{2}$ and $K(x,y)=\frac{1}{Vol(\Omega)}$. The initial value problem with initial date $u_0\in C^0_b$ will have the solution
        
       \[u(x,t)=e^{t}\left( u_0(x)-\fint_\Omega u_0(y)dy\right)+\fint_\Omega u_0(y)dy\]
        
    \end{example}
    Example \ref{EX:UnstructuredAnticoord} is an anticoordination game in the sense that it is the opposite of the coordination game in example \ref{EX:UnstructuredCoord}. It is equivalent to considering the solutions to the coordination game in backward time and for this reason, the solution is immediate. Observe that in this example the coordination condition \eqref{coordinationcondition} is not met and thus the solution does not abide by a maximum principle. However, the solution does exist globally in time.

    \begin{example}[Structured asymmetric-Toeplitz game]\label{EX:Asymmetric}
    Consider a domain $\Omega$ with a Kernel $K\in C^0_b(\Omega;L^1(\Omega))$. If $\rho(z)=cz$ (and thus the game is a dis-coordination game) then the solution is
    
   \[u(x,t)=u_0(x)+tc\|K(x,\cdot)\|_{L^1(\Omega)}.,\]
    \end{example}
    \begin{proof}
        It is easy to see that the nonlocality in this case becomes
        
       \[g[u]=\int_\Omega cK(x,y)dy\] so finding the solution to the IVP is trivial. 
    \end{proof}

    In example \ref{EX:Asymmetric}, we see a glimpse of different behaviors present in Toeplitz games, In example \ref{EX:UnstructuredCoord}, solutions remain bounded and always converge to the consensus equilibrium. Example \ref{EX:UnstructuredAnticoord} is simply the backward time solution of the coordination game and so any constant solution represents an unstable equilibrium but starting from any non-constant initial data will result in a solution growing without bound. In example \ref{EX:Asymmetric} we have a game wherein every player wants to be as far above (or below) the population average as possible so the strategy profile will increase (or decrease) monotonically depending on the kernel $K$. If we combine these elements we can get a situation wherein players are acting under coordination but also seeking to be above-average. 

    \begin{example}[unstructured coordination and advection]\label{EX:UnstructuredAsymmetricCoord}
        Let $\Omega$ be a compact domain. If $K(x,y)=\frac{1}{Vol{\Omega}}$ and $\rho=\frac{-z^2}{2}+cz$ then the solution will be
        
       \[u(x,t)=e^{-t}(u_0(x)-\bar{u_0})+\bar{u_0}+ct\] where $\bar{u_0}=\fint_\Omega u_0(y)dy$
    \end{example}
    \begin{proof}
        Again the nonlocality will be
        
       \[g[u]=\frac{1}{Vol\Omega}\int_\Omega u(y,t)-u(x,t)+cdy\]
       Let $w$ be a general from example \ref{EX:UnstructuredCoord} with $K$ as described and let $v=ct$. $v$ is only a vertical translation so $w(x)-w(y)=(w(x)+v(x))-(w(y)+v(y))$. Let $u=w+v$ and note that $w(x,t)-w(y,t)=u(x,t)-u(y,t)$. Thus 

       \begin{equation*}
       \begin{split}u_t&=w_t+v_t\\
       &=\frac{1}{Vol \Omega}\int_\Omega w(y,t)-w(x,t)dy+\frac{1}{Vol \Omega}\int_\Omega c dy \\
       &=\frac{1}{Vol\Omega}\int_{\Omega}u(y,t)-u(x,t)+c dy\\
       &= g[u]
       \end{split}
       \end{equation*}
        Therefore we can easily see that the solution to the IVP is
        
       \[u(x,t)=e^{-t}\left( u_0(x)- \bar{u_0}\right)+ \bar{u_0}+ct.\]
    \end{proof}

    It appears as though we have combined solutions from examples \ref{EX:UnstructuredCoord} and \ref{EX:Asymmetric}. However, this linear combination of solutions only works because solutions are still solutions under vertical translation. This is the same thing as saying if $u_t=g[u]$ and $w=u+ct$ then $w_t=g[u]+c$. In general, linear combinations of solutions do not solve linear combinations of IVPs. 
    
\section{Numerical Results}\label{SEC:NumericalResults}
    The examples in the previous subsections help us to understand the kinds of behavior we may expect from these kinds of games but we cannot, in general, solve the IVP analytically. However, because of theorem \ref{THM:Regularity}, we can use numerical methods to find solutions, at least in the coordination case. 

    Let us begin on the unit square $\Omega=\prod_{i=1}^n[0,1]$ in $\mathbb{R}^n$ and let $\Omega_T=\Omega\times(0,T)$. If $u_t=g[u]$ on $\Omega_T$ and $u(x,0)=u_0\in C^{0,1}(\Omega)$ then we will try to approximate $u$ with the grid function $w\in \mathcal{V}(\Omega_T^{(h,\tau)})$. Here $\mathcal{V}$ is the set of grid functions which are defined on a discretization of $\overline{\Omega_T}$, 
    
    \begin{equation}\label{hypercube}
    \Omega_T^{(h,\tau)}=\Omega^{h}\times \{\tau l\}_{l=0}^\frac{T}{\tau}=\prod_{i=1}^n\left(\{hk\}_{k=0}^\frac{1}{h}\right)_i\times \{\tau l\}_{l=0}^{T/\tau}.
    \end{equation}
    In particular the set of grid functions we are interested in are $\mathcal{V}(\Omega^{(h,t)}_T)=\{v; v:\Omega^{(h,t)}_T\rightarrow \mathbb{R}\}$ and $\mathcal{V}(\Omega^h)=\{v;v:\Omega^h\to \mathbb{R}\}$ These grid function spaces are different ways of imagining $\mathbb{R}^{\frac{1}{h}^n\cdot \frac{T}{\tau}}$ and $\mathbb{R}^{\frac{1}{h}^n}$ respectively. We use this reimagining so that the comparisons between $v\in \mathcal{V}(\Omega^{(h,\tau)}_T)$ and $u\in C^{0,1}(\Omega_T)$ are more natural. 
    
    Let $\pi^h:C^{0}(\Omega)\rightarrow \mathcal{V}(\Omega^h)$ be the operator which takes a function on $\Omega$ and returns the grid function which is equal to the input function at every point of the grid $\Omega^h$. In the present study we will deal only in the case that $K\in C^0_b(\Omega;C^{0,1}_0)$ so $\pi^hK(x,\cdot)\in \mathcal{V}(\Omega^h)$ for every $x\in \Omega^h$. A generalization is possible but is not immediately necessary for the main results of the paper. There are some cases of $K\in C^0_b(\Omega;L^1(\Omega))$ for which this method is not appropriate. (For example $K(x,y)=d(x,\partial \Omega)^{-n}J(\frac{x-y}{d(x,\partial\Omega)})$ where $J\in L^1(B(0,1))$).

    Let $w(\cdot, 0)=\pi^h u_0$ on $\Omega^h$, and compute for an $\mathbf{x}\in \Omega^h$,
    \begin{equation}\label{ForwardEuler}
    w(\mathbf{x},t_{i+1})=w(\mathbf{x},t_i)+\tau \sum_{\mathbf{y}\in ^-\Omega^h}K(\mathbf{x},\mathbf{y})\rho'(w(\mathbf{x},t_i)-w(\mathbf{y},t_i))h^n
    \end{equation}
    where $^-\Omega^h=\prod_{i=1}^n(\{hk\}_{k=0}^{\frac{1}{h}-1})_i$. We will show for this particular domain that the method \eqref{ForwardEuler} is consistent and convergent.

    \begin{lemma}[Consistency of Forward Euler Method]\label{LEM:Consistency}
        The numerical scheme \eqref{ForwardEuler} is consistent to order $\tau+h$ for the nonlocality \eqref{nonlocality} with a bounded $w\in C^{1,1}$ with no boundary data when $K\in C^0_b(\Omega;C^{0,1}_0)$ with a uniform bounds on $\|K(x,\cdot)\|_\infty$ and on the Lipschitz constant for $K(x,\cdot)$.
    \end{lemma}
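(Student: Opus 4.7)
The plan is to decompose the local truncation error into a forward-Euler time-stepping piece and a left Riemann-sum quadrature piece, and bound each separately. Writing the discrete nonlocality as
\begin{equation*}
Q^h[w](\mathbf{x},t_i):=\sum_{\mathbf{y}\in{}^-\Omega^h}K(\mathbf{x},\mathbf{y})\rho'(w(\mathbf{x},t_i)-w(\mathbf{y},t_i))h^n,
\end{equation*}
the truncation error is the difference between $\frac{w(\mathbf{x},t_{i+1})-w(\mathbf{x},t_i)}{\tau}-Q^h[w](\mathbf{x},t_i)$ and the continuous operator $w_t(\mathbf{x},t_i)-g[w](\mathbf{x},t_i)$. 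Adding and subtracting $w_t(\mathbf{x},t_i)$ splits this into a temporal error $\frac{w(\mathbf{x},t_{i+1})-w(\mathbf{x},t_i)}{\tau}-w_t(\mathbf{x},t_i)$ and a spatial quadrature error $g[w](\mathbf{x},t_i)-Q^h[w](\mathbf{x},t_i)$, so it suffices to bound these by $O(\tau)$ and $O(h)$ respectively.

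For the temporal piece, the hypothesis $w\in C^{1,1}$ gives that $w_t$ is Lipschitz in $t$ uniformly in $\mathbf{x}$, so Taylor's theorem with remainder yields
\begin{equation*}
\bigl|w(\mathbf{x},t_{i+1})-w(\mathbf{x},t_i)-\tau\,w_t(\mathbf{x},t_i)\bigr|\leq C_t\,\tau^2,
\end{equation*}
and dividing by $\tau$ produces the desired $O(\tau)$ term uniformly in $\mathbf{x}$.

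For the spatial piece, I would first check that the integrand $F(y):=K(\mathbf{x},y)\rho'(w(\mathbf{x},t_i)-w(y,t_i))$ is Lipschitz in $y$ with a Lipschitz constant $L_F$ independent of $\mathbf{x}$ and $t_i$. Three ingredients combine: (i) the uniform Lipschitz bound and uniform sup-norm bound on $K(\mathbf{x},\cdot)$ supplied by $K\in C^0_b(\Omega;C^{0,1}_0)$; (ii) Lipschitz-in-space regularity of $w$, either directly from the $C^{1,1}$ assumption or via Theorem \ref{THM:Regularity}; and (iii) a Lipschitz constant for $\rho'$ on the compact range $[-2\|w\|_\infty,2\|w\|_\infty]$, which exists because $w$ is bounded and $\rho\in C^{1,1}$. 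Since the grid ${}^-\Omega^h$ exactly tiles $\Omega=[0,1]^n$ by translates of the cell $[0,h]^n$ with $\mathbf{y}_\alpha$ the lower-left corner, the left-Riemann-sum error admits the standard cell-by-cell bound
\begin{equation*}
\int_{C_\alpha}\bigl|F(y)-F(\mathbf{y}_\alpha)\bigr|\,dy\leq L_F\sqrt{n}\,h\cdot h^n,
\end{equation*}
so summing over the $h^{-n}$ cells gives $|g[w](\mathbf{x},t_i)-Q^h[w](\mathbf{x},t_i)|\leq L_F\sqrt{n}\,h$, uniformly in $\mathbf{x}$ and $t_i$; the vanishing condition built into $C^{0,1}_0$ additionally prevents any boundary-truncation correction from being needed. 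Combining with the temporal estimate produces the claimed $O(\tau+h)$ consistency. The main obstacle is essentially bookkeeping: verifying that each of the Lipschitz and sup-norm constants above can be chosen uniformly in $\mathbf{x}$ and $t_i$, which is precisely what the uniform hypotheses on $K$ together with the boundedness and $C^{1,1}$ regularity of $w$ provide.
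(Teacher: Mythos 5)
Your proposal is correct and follows essentially the same route as the paper: split the truncation error into a forward-difference-in-time piece of order $\tau$ and a left-endpoint quadrature error of order $h$, the latter bounded cell by cell using the uniform Lipschitz and sup-norm bounds on $K(\mathbf{x},\cdot)$, the Lipschitz constant of $\rho'$ on the compact range of $w$, and the spatial Lipschitz continuity of $w$. The only cosmetic difference is that you bound the Lipschitz constant of the full integrand $F$ at once via the product rule, whereas the paper splits the per-cell difference into separate $K$ and $\rho'$ terms; the resulting constants are identical.
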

    \begin{proof}
          First we compute the error of the right-hand numerical quadrature
          
        \begin{equation}\label{quadrature}\mathcal{G}^h[w](\mathbf{x},t_i):=\sum_{\mathbf{y}\in ^-\Omega^h}K(\mathbf{x},\mathbf{y})\rho'(w(\mathbf{x},t_i)-w(\mathbf{y},t_i))h^n\end{equation}
        for a bounded $w\in C^{0,1}$. Let $\mathbf{y}\in \,^-\Omega^h$ and observe that in the hyperrectangle $\omega_{\mathbf{y}}:=\prod_{k=1}^n[\mathbf{y}_k,\mathbf{y}_k+h]$, $|w(y,t_i)-w(\mathbf{y},t_i)|\leq Lc_nh$ for all $y\in \omega_{\mathbf{y}}$, where $L$ is the Lipschitz constant for $w$ and $c_n$ is a constant depending on the dimension of $\Omega$.  

        As such, for any $y\in \omega_{\mathbf{y}},$ $\mathbf{x}\in \Omega^{h},$ and $\mathbf{y}\in \,^-\Omega^h$, we have $|\rho'(w(\mathbf{x},t_i)-w(y,t_i))-\rho'(w(\mathbf{x},t_i)-w(\mathbf{y},t_i))|\leq L_\rho Lc_nh$ where $L_\rho$ is the Lipschitz constant for $\rho'$ on the interval containing the compact range of $w$, $[-2\|w\|_\infty,2\|w\|_\infty]$. By assumption,  $\|K(\mathbf{x},\cdot)\|_\infty\leq B<\infty$ uniformly over $\Omega^h$ and has a Lipschitz constant, $L_K<\infty$ which is an appropriate Lipschitz constant for $K(x,\cdot) \, \forall x\in \Omega$.  This, along with the fact that $\rho'$ attains its maximum on $[-2\|u\|_\infty,2\|u\|_\infty]$, which we call $W$,  allows us to do the following computation. First note that $K(\mathbf{x},\mathbf{y})\rho'(u(\mathbf{x},t_i)-u(\mathbf{y},t_i))h^n=\int_{\omega_y}K(\mathbf{x},\mathbf{y})\rho'(u(\mathbf{x},t_i)-u(\mathbf{y},t_i))dy$. Thus we write,
        \begin{equation*}
            \begin{split}
                \bigg|\int_{\omega_\mathbf{y}}&K(\mathbf{x},y)\rho'(u(\mathbf{x},t_i)-u(y,t_i))dy-K(\mathbf{x},\mathbf{y})\rho'(u(\mathbf{x},t_i)-u(\mathbf{y},t_i))h^n\bigg|\\
                &\leq\int_{\omega_\mathbf{y}}\bigg|K(\mathbf{x},y)\rho'(u(\mathbf{x},t_i)-u(y,t_i))dy-K(\mathbf{x},\mathbf{y})\rho'(u(\mathbf{x},t_i)-u(\mathbf{y},t_i)\bigg|dy\\
                &\leq \int_{\omega_\mathbf{y}}|K(\mathbf{x},y)-K(\mathbf{x},\mathbf{y})||\rho'(u(\mathbf{x},t_i)-u(y,t_i))|dy\\
                &\quad \quad +\int_{\omega_{\mathbf{y}}}|K(\mathbf{x},\mathbf{y})||\rho'(u(\mathbf{x},t_i)-u(y,t_i))-\rho'(u(\mathbf{x},t_i)-u(\mathbf{y},t_i))|dy\\
                &=:I_1+I_2
            \end{split}
        \end{equation*}
        As we have before, we will deal with each integral separately. The second integral is controlled by $I_2\leq L_\rho Lc_nh\int_{\omega_\mathbf{y}}|K(\mathbf{x},\mathbf{y})|dy\leq B L_\rho Lc_nh^{n+1}$. Now notice that for $I_1$
          \begin{equation*}
              \begin{split}
                  I_1&\leq W\int_{\omega_\mathbf{y}}|K(\mathbf{x},y)-K(\mathbf{x},\mathbf{y})|dy\\
                    &\leq W\int_{\omega_\mathbf{y}}L_Khdy\\
                    &\leq WL_Kh^{n+1}
                \end{split}
          \end{equation*}
          Thus the original difference is controlled by 
          
         \[C_1h^{n+1}:=(WL_K+BL_\rho Lc_n)h^{n+1}\]
        Now when we sum across every $\mathbf{y}\in ^-\Omega^h_T$ we see that 
          \begin{equation*}
              |g[w](\mathbf{x},t_i)-\mathcal{G}^h[w](\mathbf{x},t_i)|\leq C_1h^{n+1}\frac{1}{h^{n}}
          \end{equation*}
        Therefore we have shown that 
        
       \[g[w]=\mathcal{G}^h[w](\mathbf{x},t_i)+\mathcal{O}(h)\]

        Now, suppose $w$ is continuously differentiable in time, it is easy to see that the forward difference 
        
       \[\frac{\partial}{\partial t}w(\mathbf{x},t_i)=\frac{w(\mathbf{x},t_i)-w(\mathbf{x},t_{i+1})}{\tau}+\mathcal{O}(\tau).\] Thus we can say that the numerical scheme \eqref{ForwardEuler} is consistent to order $\tau+h$ for functions $w$ which are bounded, $C^1$ in time, and $C^{0,1}$ in space. 
    \end{proof}
    For a linear problem, the consistency from lemma \ref{LEM:Consistency} and stability from a discrete maximum principle would complete the proof of convergence. The discrete maximum principle holds for this method and is proved in the appendix \ref{APP:AdditionalNumericalResults}, however, it only provides a sanity check that this method makes sense when $\tau$ and $h$ are sufficiently small. Because the equation is nonlinear, we cannot use the Lax principle and lemma \ref{LEM:DiscreteMaximumPrinciple} to prove convergence. Instead, we will recapitulate in the discrete case the argument about the Lipschitz continuity of $\mathcal{G}^h$ in $\Omega^{(h,\tau)}_T$. 

    \begin{lemma}[Lipshitz continuity of $\mathcal{G}^h$]\label{LEM:DiscreteLipschitz}
        For any  bounded (in the sup norm sense) subset of $X_R:=\{w\in \mathcal{V}(\Omega^{(h,\tau)}_T);\|w\|_{\ell^\infty}\leq R\}$, there is a $C^\mathcal{G}\geq 0$ such that for every $w_1, w_2\in X_{R}$
        
       \[\|\mathcal{G}[w_1](\cdot, t_i)-\mathcal{G}[w_2](\cdot, t_i)\|_{\ell^\infty\left(\Omega^h\right)}\leq C^\mathcal{G}\|w_1(\cdot, t_i)-w_2(\cdot, t_i)\|_{\ell^\infty\left(\Omega^h,\right)}\]
        so long as $K\in C^0_b(\Omega;C^{0,1}_0)$ with a uniformly bounded $\|K(x,\cdot)\|_\infty$ and $\mathcal{G}$ is defined as in \eqref{quadrature}.
    \end{lemma}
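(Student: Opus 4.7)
The plan is to recapitulate in the discrete setting the same argument used in Lemma \ref{LEM:Lipschitzg}. Since the Riemann-sum structure of $\mathcal{G}^h$ is nothing more than an integral against a discrete measure supported on $\,^-\Omega^h$, we expect all the same manipulations to go through with essentially no new ideas.

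First, fix $w_1, w_2 \in X_R$ and a grid point $\mathbf{x}\in \Omega^h$. Since $\rho\in C^{1,1}(\mathbb{R})$, there is a Lipschitz constant $L_\rho$ for $\rho'$ on the compact interval $[-2R, 2R]$, which contains every value of $w_j(\mathbf{x},t_i)-w_j(\mathbf{y},t_i)$ for $j=1,2$. Writing out the difference and pulling the finite sum outside the absolute value,
\begin{equation*}
\begin{split}
|\mathcal{G}^h[w_1](\mathbf{x},t_i)-&\mathcal{G}^h[w_2](\mathbf{x},t_i)|\\
&\leq \sum_{\mathbf{y}\in\,^-\Omega^h}K(\mathbf{x},\mathbf{y})\bigl|\rho'(w_1(\mathbf{x},t_i)-w_1(\mathbf{y},t_i))-\rho'(w_2(\mathbf{x},t_i)-w_2(\mathbf{y},t_i))\bigr|h^n.
\end{split}
\end{equation*}
Applying the Lipschitz bound for $\rho'$ and the triangle inequality exactly as in Lemma \ref{LEM:Lipschitzg} yields
\begin{equation*}
\bigl|\rho'(w_1(\mathbf{x},t_i)-w_1(\mathbf{y},t_i))-\rho'(w_2(\mathbf{x},t_i)-w_2(\mathbf{y},t_i))\bigr|\leq 2L_\rho\|w_1(\cdot,t_i)-w_2(\cdot,t_i)\|_{\ell^\infty(\Omega^h)}.
\end{equation*}

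Next I would control the kernel sum $\sum_{\mathbf{y}\in\,^-\Omega^h}K(\mathbf{x},\mathbf{y})h^n$. Because $\|K(\mathbf{x},\cdot)\|_\infty\leq B$ uniformly in $\mathbf{x}$ and $|\,^-\Omega^h|\,h^n=1$ for $\Omega=[0,1]^n$, this Riemann sum is bounded by $B$ independently of $\mathbf{x}$ and $h$; alternatively, one may note that by the consistency-style estimate from Lemma \ref{LEM:Consistency} the sum converges to $\|K(\mathbf{x},\cdot)\|_{L^1(\Omega)}\leq D$ with error $\mathcal{O}(h)$, so in any case the sum is bounded by a constant $D'$ depending only on $K$. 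Combining the two estimates gives
\begin{equation*}
|\mathcal{G}^h[w_1](\mathbf{x},t_i)-\mathcal{G}^h[w_2](\mathbf{x},t_i)|\leq 2L_\rho D'\,\|w_1(\cdot,t_i)-w_2(\cdot,t_i)\|_{\ell^\infty(\Omega^h)},
\end{equation*}
and taking the supremum over $\mathbf{x}\in\Omega^h$ yields the claim with $C^\mathcal{G}=2L_\rho D'$.

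I do not anticipate a real obstacle: the only mildly delicate point is making sure the Riemann sum of the kernel is bounded by a constant independent of $h$ and $\mathbf{x}$, which is handled either by the uniform sup-norm bound on $K(\mathbf{x},\cdot)$ together with $|\Omega|=1$, or by the Riemann-sum-to-integral comparison already needed for Lemma \ref{LEM:Consistency}. As in Lemma \ref{LEM:Lipschitzg}, it is worth emphasising that $L_\rho$, and therefore $C^\mathcal{G}$, depends on the radius $R$ through the compact interval on which $\rho'$ is Lipschitz.
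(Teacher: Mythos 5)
Your proof is correct and follows essentially the same route as the paper's: pull the finite sum outside, apply the Lipschitz bound for $\rho'$ on $[-2R,2R]$ together with the triangle inequality to get the factor $2L_\rho\|w_1(\cdot,t_i)-w_2(\cdot,t_i)\|_{\ell^\infty(\Omega^h)}$, and bound the Riemann sum of the kernel by the uniform sup-norm bound on $K(\mathbf{x},\cdot)$ using $\mathrm{vol}(\Omega)=1$. The paper arrives at the same constant $C^\mathcal{G}=2L_\rho B$ with the same caveat that $L_\rho$, and hence $C^\mathcal{G}$, depends on $R$.
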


    \begin{proof}
        Consider $w_1, w_2\in X_R$ and observe that $\|\mathcal{G}[w_1]-\mathcal{G}[w_2]\|_{\ell^\infty}$
        \begin{equation*}
        \begin{split}
            &\leq \max_{\mathbf{x}\in \Omega^h}\left\{\sum_{\mathbf{y}\in ^-\Omega^h}|K(\mathbf{x},\mathbf{y})||\rho'(w_1(\mathbf{x},t_i)-w_1(\mathbf{y},t_i))-\rho'(w_2(\mathbf{x},t_i)-w_2(\mathbf{y},t_i))|h^n\right\}\\
            &\leq \max_{\mathbf{x}\in \Omega^h}\left\{\|\rho'(w_1(\mathbf{x},t_i)-w_1(\cdot,t_i))-\rho'(w_2(\mathbf{x},t_i)-w_2(\cdot,t_i))\|_{\ell^\infty(\Omega^h)}\sum_{y\in^-\Omega^h}|K(\mathbf{x}-\mathbf{y})|h^n\right\}
            \end{split}
        \end{equation*}
        Notice that, because the domain in question has volume $vol(\Omega)=1$, we have \begin{equation*}
        \begin{split}
        \sum_{\mathbf{y}\in^-\Omega^h}|K(\mathbf{x},\mathbf{y})|h^n &\leq \sum_{\mathbf{y}\in \Omega^{h}}\|\pi^hK(\mathbf{x},\mathbf{y})\|_{\ell^\infty(\Omega^{h})}h^n\\
        &\leq\|\pi^h K(\mathbf{x},\cdot)\|_{\ell^\infty(\Omega^h)}\sum_{\mathbf{y}\in^-\Omega^h}h^n\\
        &\leq \|K(\mathbf{x},\cdot)\|_{\ell^\infty(\Omega^h)}vol(\Omega)\\
        &\leq B:=\sup_{\mathbf{x}\in\Omega^h}\|K(\mathbf{x},\cdot)\|_{\ell^\infty(\Omega^h)}
        \end{split}
        \end{equation*}
        for all $\mathbf{x}\in \Omega^h$ and so we proceed with a very similar argument about $\rho'$ as in lemma \ref{LEM:Lipschitzg}.

        \begin{equation*}
            \begin{split}
                \|\rho'(w_1(\mathbf{x},t_i)-&w_1(\cdot, t_i))-\rho'(w_2(\mathbf{x},t_i)-w_2(\cdot,t_i))\|_{\ell^\infty(\Omega^h)}\\
                &\leq L_\rho\|w_1(\mathbf{x},t_i)-w_1(\cdot,t_i)-w_2(\mathbf{x},t_i)+w_2(\cdot,t_i)\|_{\ell^\infty(\Omega^h)}\\
                &\leq L_\rho\left(|w_1(\mathbf{x},t_i)-w_2(\mathbf{x},t_i)|+\|w_1(\cdot,t_i)-w_2(\cdot,t_i)\|_{\ell^\infty(\Omega^h)}\right)
            \end{split}
        \end{equation*}
        So naturally we see, as before, that
        \begin{equation*}
        \begin{split}
        \max_{\mathbf{x}\in \Omega^h}\|\rho'(w_1(\mathbf{x},t_i)-&w_1(\cdot, t_i))-\rho'(w_2(\mathbf{x},t_i)-w_2(\cdot,t_i))\|_{\ell^\infty(\Omega^h)}\\
        &\leq 2L_\rho\|w_1(\cdot, t_i)-w_2(\cdot, t_i)\|_{\ell^\infty(\Omega^h)}
        \end{split}
        \end{equation*}
        Therefore, we get the result that
        
       \[\|\mathcal{G}[w_1](\cdot ,t_i)-\mathcal{G}[w_2](\cdot, t_i)\|_{\ell^\infty(\Omega^h)}\leq 2L_\rho M\|w_1(\cdot, t_i)-w_2(\cdot, t_i)\|_{\ell^\infty(\Omega^h)}\]
        Let $C^\mathcal{G}=2L_\rho M$ to complete the proof. Again $L_\rho$ may depend on $R$ so $C^\mathcal{G}$ will depend on $R$. 
    \end{proof}
    With these two results, we can show that the numerical scheme \eqref{ForwardEuler} in convergent in the case that $K\in C^0_b(\Omega;C^{0,1}_0)$ with uniformly bounded $\|K(x,\cdot)\|_\infty$ and with a global Lipschitz constant. The general case wherein $K\in C^0_b(\Omega;L^1(\Omega))$ requires a choice of approximate $K$ which are bounded and defined everywhere to manage the case in which $K(x,\cdot)$ has a singularity for some $x\in \Omega$ which makes this method inappropriate in some cases. There are more sophisticated numerical methods for non-local equations which are equipped to handle the more general case \cite{D’Elie2020} but we do not use them in the analysis in section \ref{SEC:ModelingResults}. For this reason we only consider the finite difference method in this nonlinear setting, which mildly extends the results from \cite{Du2019} for the linear setting.
    
    \begin{theorem}[Convergence of the Forward Euler Scheme]\label{THM:Convergence}
    In a domain with discretization \eqref{hypercube}, the IVP $u_t=g[u]$ on $\Omega$, with $u(x,0)=u_0\in C^{0,1}_b(\Omega)$, and with $\rho$ satisfying \eqref{coordinationcondition}, the numerical scheme \eqref{ForwardEuler} is convergent in the case where $K\in C^0_b(\Omega;C^{0,1}_0)$ with uniformly bounded $\|K(x,\cdot)\|_\infty$ and a uniform global Lipschitz constant.
    \end{theorem}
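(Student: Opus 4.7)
The plan is to run a classical discrete Grönwall error analysis, combining the quadrature consistency from Lemma \ref{LEM:Consistency}, the discrete Lipschitz estimate from Lemma \ref{LEM:DiscreteLipschitz}, the continuous maximum principle from Lemma \ref{LEM:MaximumPrinciple}, and the discrete maximum principle referenced in Appendix \ref{APP:AdditionalNumericalResults}. The goal is to show that $\|u(\cdot,t_i)-w(\cdot,t_i)\|_{\ell^\infty(\Omega^h)}\to0$ uniformly in $i$ as $(h,\tau)\to(0,0)$.

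The first step is to restrict the exact solution $u$ to the grid and Taylor-expand in time. Since $u_t=g[u]$ is continuous and bounded by Lemma \ref{LEM:welldefined}, and is in fact Lipschitz in $t$ (one application of Lemma \ref{LEM:Lipschitzg} combined with $\|u(\cdot,t)-u(\cdot,s)\|_\infty\le\|g[u]\|_\infty|t-s|$ yields $\|g[u](\cdot,t)-g[u](\cdot,s)\|_\infty\le C^g\|g[u]\|_\infty|t-s|$), one obtains
\[ u(\mathbf{x},t_{i+1})=u(\mathbf{x},t_i)+\tau g[u](\mathbf{x},t_i)+O(\tau^2). \]
Applying the consistency identity $g[u](\mathbf{x},t_i)=\mathcal{G}^h[u](\mathbf{x},t_i)+O(h)$ and then subtracting \eqref{ForwardEuler} from the expanded identity, with $e:=u-w$, I would obtain
\[ e(\mathbf{x},t_{i+1})=e(\mathbf{x},t_i)+\tau\bigl(\mathcal{G}^h[u](\mathbf{x},t_i)-\mathcal{G}^h[w](\mathbf{x},t_i)\bigr)+\tau\cdot O(h+\tau). \]
Taking $\ell^\infty$ norms, invoking Lemma \ref{LEM:DiscreteLipschitz} on the middle term, and using $e(\cdot,0)\equiv 0$ produces the one-step recursion
\[ \|e(\cdot,t_{i+1})\|_{\ell^\infty}\le(1+\tau C^{\mathcal{G}})\|e(\cdot,t_i)\|_{\ell^\infty}+C\tau(h+\tau), \]
which iterates (discrete Grönwall) to
\[ \|e(\cdot,t_n)\|_{\ell^\infty}\le\frac{C(h+\tau)}{C^{\mathcal{G}}}\bigl(e^{C^{\mathcal{G}}T}-1\bigr), \]
and this tends to zero as $h,\tau\to 0$.

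The main obstacle I anticipate is securing a single value of $R$ such that both $u(\cdot,t_i)$ and $w(\cdot,t_i)$ lie in $X_R$ for every $i$, since the Lipschitz constant $C^{\mathcal{G}}$ (which depends on the Lipschitz constant of $\rho'$ over a fixed range) must be used uniformly across the time iteration. On the continuous side this is free: Lemma \ref{LEM:MaximumPrinciple} gives $\|u(\cdot,t)\|_\infty\le\|u_0\|_\infty$ for all $t$. On the discrete side one needs the discrete maximum principle from Appendix \ref{APP:AdditionalNumericalResults}, which imposes a CFL-type constraint on $\tau$ relative to $h$, $\|K\|_\infty$, and the Lipschitz constant of $\rho'$ to guarantee $\|w(\cdot,t_i)\|_{\ell^\infty}\le\|u_0\|_\infty$ throughout. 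With $R:=\|u_0\|_\infty$ both sides of the recursion then live in the Lipschitz set $X_R$, and the convergence asserted in the theorem holds along any refinement $(h,\tau)\to(0,0)$ compatible with this CFL condition.
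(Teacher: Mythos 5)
Your proposal is correct and follows essentially the same route as the paper's proof: consistency from Lemma \ref{LEM:Consistency}, the discrete Lipschitz bound from Lemma \ref{LEM:DiscreteLipschitz}, a one-step error recursion closed by discrete Gr\"onwall, and the continuous and discrete maximum principles to fix a single $R$ (hence a single $C^{\mathcal{G}}$) across all time steps. If anything you are slightly more careful than the paper on two points it glosses over — justifying the $O(\tau)$ forward-difference error via the Lipschitz-in-time regularity of $g[u]$, and making explicit the CFL-type restriction on $\tau$ required by the discrete maximum principle of Lemma \ref{LEM:DiscreteMaximumPrinciple}.
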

        
    \begin{proof}
        Let $\pi_{h,\tau}:C^0(\Omega_T)\rightarrow \mathcal{V}(\Omega^{(h,\tau)}_T)$ discretize $u$, a solution of the IVP $u_t=g[u]$ with $u(x,0)=u_0\in C^{0,1}_b(\Omega)$. Let $w$ be a grid function which solves the numerical scheme \eqref{ForwardEuler} with $w(\mathbf{x},0)=\pi_h u_0$. Further let $e=\pi_{h,\tau} u- w$. Now notice that 
        \begin{equation}
            \begin{split}
            \frac{e(\mathbf{x},t_{i+1})-e(\mathbf{x},t_{i})}{\tau}&=\frac{u(\mathbf{x},t_{i+1})-u(\mathbf{x},t_{i})}{\tau}- \mathcal{G}^h[w](\mathbf{x},t_i)\\
            &=u_t(\mathbf{x},t_i)-\mathcal{G}^h[w](\mathbf{x},t_i)+\mathcal{O}(\tau)\\
            &=g[u](\mathbf{x},t_i)-\mathcal{G}^h[w](\mathbf{x},t_i)+\mathcal{O}(\tau)\\
            &=\mathcal{G}^h[u](\mathbf{x},t_i)-\mathcal{G}^h[w](\mathbf{x},t_i)+\mathcal{O}(\tau+h)
            \end{split}
        \end{equation}
        by lemma \ref{LEM:Consistency}. $\mathcal{G}^h$ is not linear so the stability of the method does not complete the proof. Instead, we use lemma \ref{LEM:DiscreteLipschitz} to show that
        \begin{equation*}
            \begin{split}
        e(\mathbf{x},t_{i+1}) &=e(\mathbf{x},t_{i})+\tau\left(\mathcal{G}^h[u](\mathbf{x},t_i)-\mathcal{G}^h[w](x,t_i)+\mathcal{O}(\tau+h)\right)\\
        \|e(\cdot, t_{i+1})\|_{\ell^\infty(\Omega^h)}&\leq\|e(\cdot, t_{i})\|_{\ell^\infty(\Omega^h)}+ \tau \|\mathcal{G}^h[u](\mathbf{x},t_i)-\mathcal{G}^h[w](x,t_i)\|_{\ell^\infty(\Omega^h)}+\tau|\mathcal{O}(\tau+h)|\\
        &\leq\|e(\cdot, t_{i})\|_{\ell^\infty(\Omega^h)}+ \tau C^\mathcal{G}\|\pi^h u(\mathbf{x},t_i)-w(x,t_i)\|_{\ell^\infty(\Omega^h)}+\tau|\mathcal{O}(\tau+h)|\\
        &\leq (1+\tau C^\mathcal{G})\|e(\cdot, t_i)\|_{\ell^\infty(\Omega^h)}+\tau|\mathcal{O}(\tau +h)|
        \end{split}
        \end{equation*}

        We now employ Gr\"onwall's lemma in the discrete forward difference case to say that
        \begin{equation*}
            \|e(\cdot, t_{i+1})\|_{\ell^\infty(\Omega^h)}\leq (1+\tau C^\mathcal{G})^i\|e(\cdot, 0)\|_{\ell^\infty(\Omega^h)}+\frac{1}{C^\mathcal{G}}((1+\tau C^\mathcal{G})^i-1)C(\tau+h)
        \end{equation*}
        Naturally, $e(\cdot, 0)\equiv 0$ and for finite time $T$ we have a maximum $i\leq T/\tau$ so we can say that
        \begin{equation*}
            \|e\|_{\ell^\infty(\Omega^{(\tau,h)}_T)}\leq \tilde{C}((1+\tau C^\mathcal{G})^\frac{T}{\tau}-1)(\tau+h)
        \end{equation*}
        Observe that $C^\mathcal{G}$ does not depend on $h$ or $\tau$. It only depends on $R$ but because of lemma \ref{LEM:MaximumPrinciple} and its discrete analogue (lemma \ref{LEM:DiscreteMaximumPrinciple}), $R$ is the same at each time step and thus so is $C^\mathcal{G}$. Also observe that $\lim_{\tau\rightarrow 0}(1+\tau C^\mathcal{G})^\frac{T}{\tau}$ exists and so this quantity is bounded for any sufficiently small $\tau$ and indeed as $(\tau, h)\rightarrow \mathbf{0}$, $\|e\|_{\ell^\infty}\rightarrow 0$.
    \end{proof}

    This proof was done for the unit cube but each part is easily generalizable to any bounded domain with constants which may depend on the volume of the domain itself.  Now that we are certain that this numerical scheme appropriately approximates solutions to the IVP, we can show several examples.
    \begin{figure}
        \centering
        \includegraphics[width=0.5\linewidth]{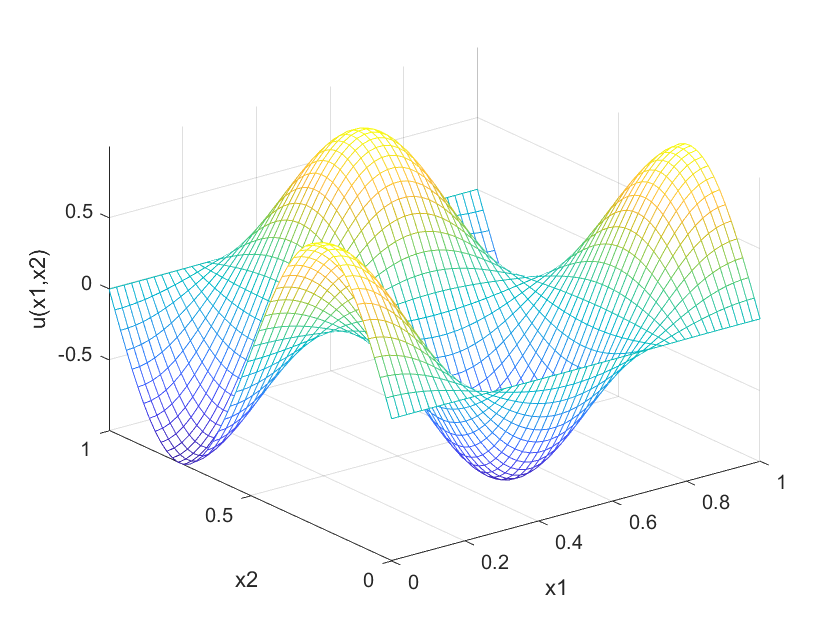}
        \includegraphics[width = 0.49\linewidth]{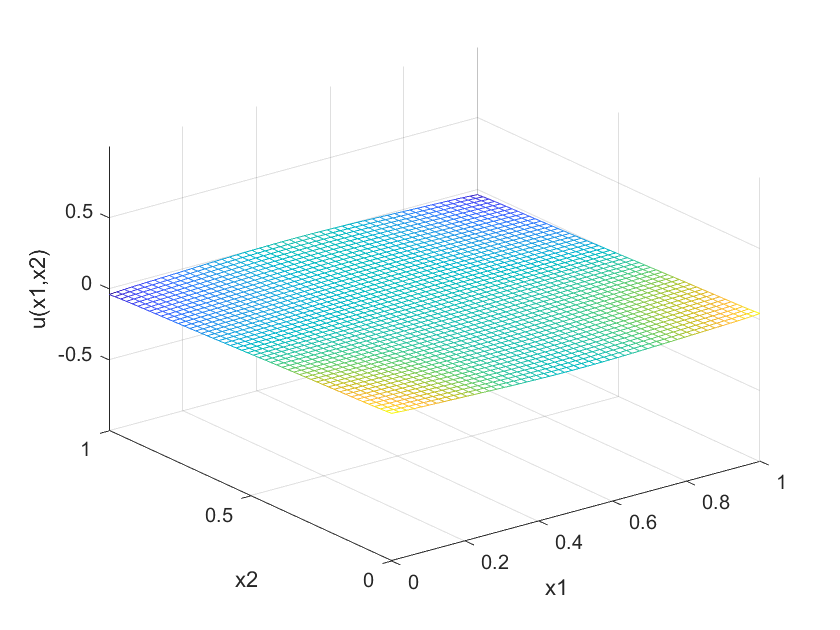}
        \caption{The Initial condition (\textbf{left}) and solution after $T=10$ (\textbf{right}) to the IVP $u_t=g[u]$ approximated by the numerical scheme \eqref{ForwardEuler}. Here the recognition function has non-zero derivative on all of $\mathbb{R}\setminus\{0\}$}
        \label{fig:NCPT}
    \end{figure}
    \begin{figure}
        \centering
        \includegraphics[width=0.5\linewidth]{ICfigure.png}
        \includegraphics[width = 0.49\linewidth]{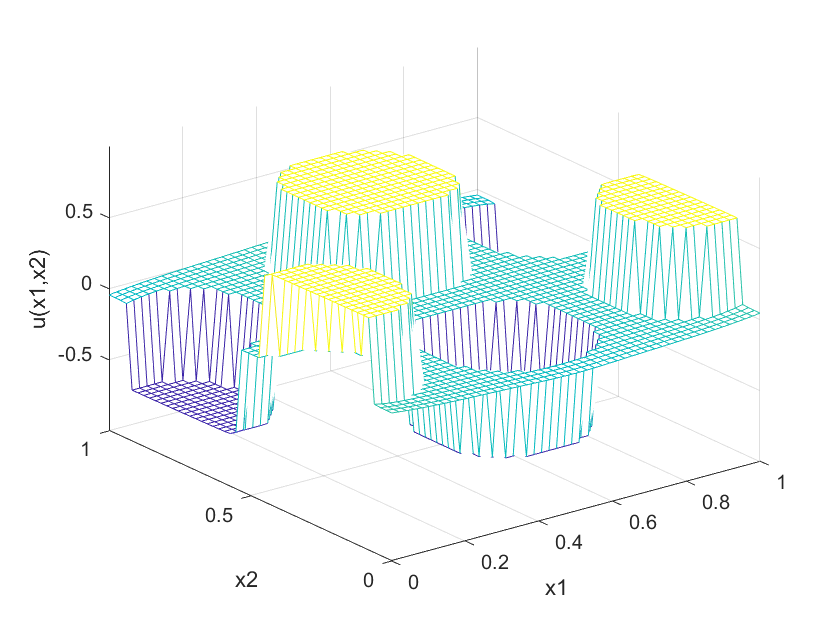}
        \caption{The Initial condition (\textbf{left}) and solution after $T=10$ (\textbf{right}) to the IVP $u_t=g[u]$ approximated by the numerical scheme \eqref{ForwardEuler}. Here the recognition functions is identically 0 outside $B_{1/4}(0)$.}
        \label{fig:CPT}
    \end{figure}

    Interrogating the system with numerical methods allow us to observe some curious properties of solutions to the coordination equation. One such interesting observation is the qualitatively different behavior seen for solutions which depends on the support of $\rho$ as in figures \ref{fig:NCPT} and \ref{fig:CPT}. This will be discussed further in the modeling results. 

 \section{Modeling Results}\label{SEC:ModelingResults}
    Having shown that the model is well posed, solutions exist, and that solutions can be approximated through simple numerical methods, we turn our attention now to what this model may reveal about coordination in continuous settings. The first, and most striking observation is the apparent discontinuities which emerge in the limit at $t\to \infty$ when $\rho'$ has compact support (as in Fig \ref{fig:CPT}). It has not been proven that solutions to the IVP will converge, even pointwise to a limit, but we do know that, if they do converge, they will clearly converge to a solution to the problem $g[u]=0$ in $\Omega$. Without imposing boundary data, solutions to this problem obviously exist (e.g. $u\equiv 0$). The existence of non-trivial solutions and solutions with boundary data are not discussed in the present study. We will, however, discuss several results about stationary solutions and present some results from numerical experiments. 

    \subsection{Stationary Solutions}\label{SUBSEC:StationarySolutions}
    Recall that, as the game was introduced in section \ref{SEC:Extension}, we are not only interested in the dynamic results but in fact may wish to consider the classical game with no time component. Results about stationary solutions in the dynamic game can, unsurprisingly, reveal more effective ways of searching for Nash equilibria in the classical game. If $u$ is a Nash equilibrium in the classical game, by definition it will have the property that 
    \begin{equation}\label{NECondition}
    J[u]:=\inf_{(x,s)\in \Omega\times \mathbb{R}}\{w(x|u)-w(x|u+t\chi_{\{x\}})\}=0.\end{equation}
    This is exactly the condition that each player is playing their best response. 
    \begin{proposition}\label{PROP:StationarySolutions}
        If $u$ is a Nash Equilibrium to the game with players in $\Omega\subset\mathbb{R}^n$, strategies in $\mathbb{R}$ and payoffs as in \eqref{ctsStratPayoff}, then $u$ is necessarily a stationary solution in the system $u_t=g[u]$.
    \end{proposition}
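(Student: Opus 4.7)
The plan is to unpack the Nash equilibrium condition \eqref{NECondition} pointwise and then apply a first-order necessary condition to the single-player deviation payoff. For a fixed focal player $x \in \Omega$, consider the one-variable function
\begin{equation*}
S_x(s) := w(x \mid u + s\chi_{\{x\}}) = \int_\Omega K(x,y)\rho(u(x)+s-u(y))\,dy,
\end{equation*}
which records the payoff to $x$ after deviating from $u(x)$ to $u(x)+s$ (note that modifying $u$ on the singleton $\{x\}$ does not affect the integrand for $y\neq x$, and $\{x\}$ has Lebesgue measure zero). The Nash condition $J[u]=0$ is equivalent to the statement that for every $x \in \Omega$ the function $S_x$ attains a global maximum at $s=0$.

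Next I would justify that $S_x \in C^1(\mathbb{R})$ with derivative obtained by differentiation under the integral sign. This is routine given the standing hypotheses: $\rho \in C^{1,1}(\mathbb{R})$, so $\rho'$ is locally Lipschitz, and for $s$ in any bounded interval the difference quotients are dominated by a constant multiple of $K(x,\cdot)$, which is in $L^1(\Omega)$. Dominated convergence then yields
\begin{equation*}
S_x'(s) = \int_\Omega K(x,y)\rho'(u(x)+s-u(y))\,dy,
\end{equation*}
and in particular $S_x'(0) = g[u](x)$.

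With $S_x$ smoothly differentiable and attaining its global maximum at $s=0$, the first-order necessary condition forces $S_x'(0)=0$, i.e.\ $g[u](x)=0$. Since this holds for every $x \in \Omega$, we conclude $\partial_t u = g[u] \equiv 0$, so $u$ is stationary in the dynamical system.

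There is essentially no serious obstacle; the argument is a clean first-order necessary condition. The only subtlety worth flagging is to confirm that the deviation $u + s\chi_{\{x\}}$ appearing in \eqref{NECondition} affects only the self-argument $u(x)$ of the integrand, not the values $u(y)$ under the $y$-integral (since it alters $u$ on a set of measure zero), which is exactly why the resulting first-order condition collapses to $g[u](x)=0$ rather than to a more delicate functional derivative. One should also note that the converse is \emph{not} established by this argument: stationarity gives a critical point of each $S_x$, not necessarily a maximizer, consistent with the usual relationship between Nash equilibria and rest points of best-response dynamics.
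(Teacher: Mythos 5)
Your proposal is correct and follows essentially the same route as the paper: both define the single-player deviation payoff $S_x(h)=\int_\Omega K(x,y)\rho(u(x)+h-u(y))\,dy$, observe that the Nash condition makes $h=0$ a global maximizer over the unbounded strategy space $\mathbb{R}$, and conclude $g[u](x)=S_x'(0)=0$ for every $x$. Your added remarks (that the measure-zero deviation $s\chi_{\{x\}}$ only enters through the self-argument, and that the converse fails) are consistent with the paper, which gives an explicit counterexample to the converse immediately after the proposition.
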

    \begin{proof}
        In the same way as in the proof of proposition \ref{PROP:ModelWellFounded} let $S_x(h):=\int_\Omega K(x,y)\rho(u(x)+h-u(y))dy$ and note that, so long as $\rho\in C^{1,1}(\mathbb{R})$ then $S_x(h)\in C^{1,1}(\mathbb{R})$ and $\frac{d}{dh}S_x(0)=\int_\Omega K(x,y)\rho'(u(x,t)-u(y,t))$. Because $u(x)$ is a best response to $u$, it is certain that $S_x(h)$ attains a global maximum at $h=0$ and because the strategic domain is unbounded, we know that $\frac{d}{dh}S_x(0)=\int_\Omega K(x,y)\rho'(u(x)-u(y))dy=0$. This is true for every $x$ and thus $g[u]=0$ in $\Omega$.
    \end{proof}

    It should be noted that the opposite direction does not hold. It is easy to construct a stationary solution which is not a Nash equilibrium.

    \begin{example}
        Consider a coordination game where $\rho\in C^{1,1}(\mathbb{R})$ satisfies $\rho(0)>0, \rho'(0)=0,$ and $supp(\rho')\subset[-a,a]$.
        
       \[u=\begin{cases}
        0&x\neq0\\
        2a&x=0
        \end{cases}\]
        is a stationary solution because $g[u]\equiv 0$. However, $u$ is not a Nash equilibrium because $w(0|u)=0$ but when $\tilde{u}=0$ on all of  $\Omega$, (so $\tilde{u}=u-2a\chi_{\{0\}}$) then $w(0|\tilde{u})=\rho(0)\|K(0,\cdot)\|_{L^1(\Omega)}$. Thus we have that $J[u]\leq w(x|u)-w(x|\tilde u)= -\rho(0)\|K(0,\cdot)\|_{L^1(\Omega)}<0$ where $J$ is defined as in \eqref{NECondition}.
    \end{example}

    This means that understanding stationary solutions will inform our understanding of the classical game, even if we cannot connect,  rigorously, our understanding of the stationary solutions to the dynamics of the IVP we have been studying. In the following theorem we characterize stationary solutions in the case that $\Omega$ is bounded and there is no boundary data whenever $K$ is supported on all of $\Omega$.

     \begin{theorem}[Stationary Solutions when $supp(\rho')$ is compact]\label{THM:StationarySolution}
        Let $\Omega$ be a bounded domain in $\mathbb{R}^n$. Let $K(x,y)\in C_b^0(\Omega;L^1(\Omega))$ so that for any $x$, $\Omega\subseteq supp(K(x,\cdot))$ and $\lambda \leq K(x,y)$. Finally, let $\rho'$ satisfy \eqref{coordinationcondition} and have support $(-a,a)\subset \mathbb{R}$, with the assumption that $\rho'$ has only one zero in this interval at $z=0$, see figure \ref{fig:rhoprime}. If $u$ is a solution to $g[u]=0$ in $\Omega$ and $u$ is bounded, then the image, $u(\Omega)$, is a finite set of points separated by at least $|a|$ except possibly at a set of measure 0.
    \end{theorem}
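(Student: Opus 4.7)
The plan is to peel off the essential range of $u$ from the top. Set $s_0 := \esssup_\Omega u$; this is finite since $u$ is bounded. The central claim is a \emph{top-gap statement}: the essential range of $u$ misses the open interval $(s_0 - a, s_0)$. Once this is established, restrict to $\Omega_1 := \{y \in \Omega : u(y) \leq s_0 - a\}$, take its essential supremum $s_1 \leq s_0 - a$, and iterate. Because $u$ is bounded below, the resulting sequence $s_0 > s_1 > s_2 > \cdots$ with gaps at least $a$ must terminate after finitely many steps, and the finite collection of remaining values is precisely the essential image claimed by the theorem.

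For the top-gap step I would pick, for each $n$, a point $x_n$ in the positive-measure set $\{g[u] = 0\} \cap \{u > s_0 - 1/n\}$ and split
\[
0 = g[u](x_n) = \int_{\{u(y) > u(x_n)\}} K(x_n,y) \rho'(u(x_n)-u(y))\,dy + \int_{\{u(y) < u(x_n)\}} K(x_n,y)\rho'(u(x_n)-u(y))\,dy.
\]
On the first set $u(x_n) - u(y) \in (-1/n, 0)$ (a.e.), so by $\rho'(0) = 0$ and the Lipschitzness of $\rho'$ the integrand there is bounded by $L_\rho/n$, and the whole first integral is at most $L_\rho D/n$, where $D$ is the uniform $L^1$ bound on $K(x,\cdot)$. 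Consequently the second (nonpositive) integral has absolute value at most $L_\rho D/n$ as well. Using $K \geq \lambda > 0$ and the fact that $-\rho'$ is bounded below by a positive constant $\eta_\epsilon$ on each compact subinterval $[\epsilon, a-\epsilon]$ (by continuity and the hypothesis that $0$ is the only zero of $\rho'$ in $(-a,a)$), one arrives at
\[
\lambda \eta_\epsilon\, \bigl|\{y : u(y) \in (s_0 - a + \epsilon, s_0 - \epsilon)\}\bigr| \leq \frac{L_\rho D}{n}.
\]
Sending $n \to \infty$ (which makes $u(x_n) \to s_0$) and then $\epsilon \to 0$ yields the desired gap. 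In the inductive step, the same computation applies with $s_i$ in place of $s_0$; the new point is that contributions from $y \notin \Omega_i$ drop out automatically, since for $x \in \Omega_i$ one has $u(x) - u(y) \leq s_i - s_{i-1} \leq -a$, outside $\mathrm{supp}(\rho')$.

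The main obstacle is the first step: because $s_0$ need not be attained pointwise, one cannot simply evaluate $g[u]$ at a maximizer and read off sign information from \eqref{coordinationcondition}. Instead, the quantitative slack $\rho'(z) = O(z)$ near $0$ must be balanced against a quantitative lower bound on the ``bad'' contribution from intermediate values; the hypotheses $K \geq \lambda$ and that $\rho'$ has $0$ as its only zero inside $(-a,a)$ are precisely what supply the two matched bounds. Everything else (reducing to working modulo null sets, iterating, and terminating by boundedness of $u$) is bookkeeping.
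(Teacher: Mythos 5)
Your proof is correct and follows essentially the same strategy as the paper's: approximate the (essential) supremum by points $x_n$, balance the $O(1/n)$ Lipschitz bound on the contribution from values above $u(x_n)$ against the lower bound $\lambda\eta_\epsilon$ coming from $K\geq\lambda$ and the single zero of $\rho'$ to force the gap $(s_0-a,s_0)$ to have null preimage, and iterate downward until boundedness terminates the process. The only cosmetic differences are that the paper works with the pointwise supremum and a four-way partition of $\Omega$ (with the far-above set annihilated by the support of $\rho'$) rather than restricting attention to $\Omega_i$, but the estimates are identical.
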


    \begin{figure}
        \centering
        \begin{tikzpicture}
            \draw[<->](-7,0)--(7,-0);
            \draw[<->](0,-3)--(0,3);

            \node (a) at (-0.5,3){$\rho'(z)$};
            \node (b) at (7,-0.5){$z$};

            \draw(-5,0)--(-5,-0.25);
            \draw(5,0)--(5,0.25);
                
            \node (c) at (-5,-0.5){$-a$};
            \node (f) at (5,0.5){$a$};

            \draw plot [smooth] coordinates {(-5,0) (-4,0.5) (-3,2) (-2,1) (-1,1.5) (0, 0) (1,-0.75) (1.5,-0.6) (2,-1) (2.5,-1.5) (3,-2) (3.5,-2)(4,-1) (5,0)};
        \end{tikzpicture}
        \caption{A diagram showing an appropriate $\rho'$ for theorem \ref{THM:StationarySolution}. $\rho'$ must be non-zero on $(-a,a)\setminus\{0\}$ and satisfy \eqref{coordinationcondition} so that on $(-a,0)$ $\rho'$ is positive and on $(0,a)$, $\rho'$ is negative. $\rho'$ is Lipschitz continuous but no more regularity is required for this argument.}
        \label{fig:rhoprime}
    \end{figure}
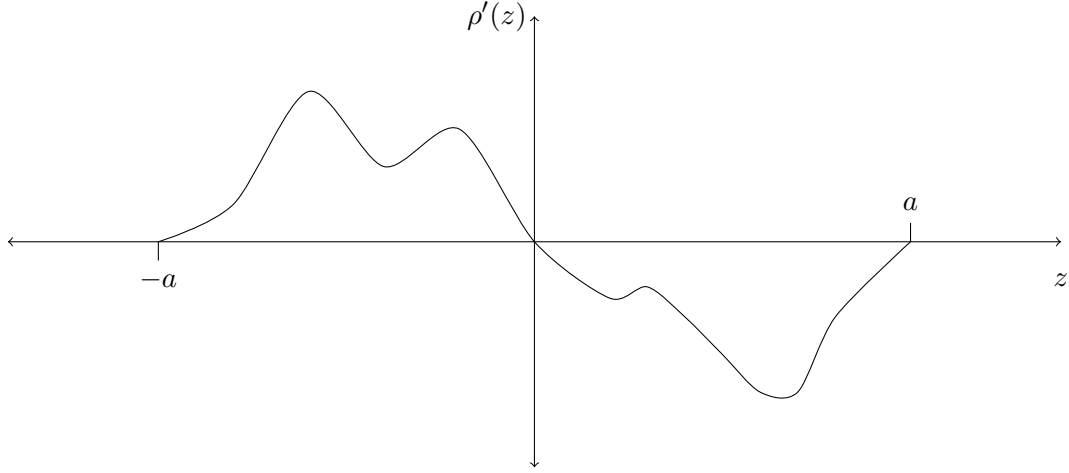

    \begin{proof}
        First note that by the assumption on $K$, $C^K\geq \|K(x,\cdot)\|_{L^1(\Omega)}>\lambda Vol(\Omega)=:C_K$ for all $x$. 
        Let $M_0=\sup_{\Omega}u(x)$. Whether or not it is attained, there is a sequence of $x_k$ so that $u(x_k)\to M_0$ and $k\to \infty$. For each $k$ we partition the domain into three parts
        \begin{equation*}
        \begin{split}
            \Omega^+_k&:=\{x\in \Omega, u(x)>u(x_k)\}\\
            \Omega^o_k&:=\{x\in \Omega; u(x)=u(x_k)\}\\
            \Omega^-_k&:=\{x\in \Omega; u(x)<u(x_k)\}
        \end{split}
        \end{equation*}
        and so $g[u]$ is partitioned into three parts
        \begin{equation*}  
            \begin{split} 
                g[u](x_k)& =\underbrace{\int_{\Omega^+_k}K(x_k,y)\rho'(u(x_k)-u(y))dy}_{:=I_k^+\geq 0}+\underbrace{\int_{\Omega^o_k}K(x_k,y)\rho'(u(x_k)-u(y))dy}_{:=I_k^o=0}\\
                &\quad +\underbrace{\int_{\Omega^-_k}K(x_k,y)\rho'(u(x_k)-u(y))dy}_{:=I_k^-\leq0}\end{split}\end{equation*}
        Note that for every $k$, $I^+_k=-I^-_k$ because $g[u]=0$. 

        Let $\beta>0$ and note that we can find a $K_\beta$ so that $u(x_k)>M_0-\frac{\beta}{C^KL_\rho}$ for all $k>K_\beta$ where $L_\rho$ is the Lipschitz constant for $\rho'$. Because $\rho'$ is Lipschitz continuous and $\rho'(0)=0$, if $-\frac{\beta}{C^KL_\rho}<u(x_k)-u(y)<0$ for all $y\in \Omega^+_k$, then $\rho'(u(x_k)-u(y))\leq \frac{L\beta}{C^KL}=\frac{\beta}{C^K}$ for all $y\in \Omega^+_k$. 
        This means that for any $\beta$ we can find a $K_\beta$ so that when $k>K_\beta$,
        \[I_k^+<\|K(x_k,\cdot)\|_{L^1(\Omega)}\frac{L\beta}{C^KL}\leq\frac{C^KL\beta}{C^KL}=\beta. \]
        This means that for every $\beta$ sufficiently small, we must see that there exists a $K$ (indeed the same $K_\beta$ should do) such that $|I_k^-|\leq\beta$ for all $k>K_\beta$.

        Now, consider $\epsilon>0$ sufficiently small and suppose there is an $S\subset\Omega$ with positive measure $\mu(S)>0$ and satisfying the inequality $M_0-a+\epsilon<u(s)<M_0-\epsilon$. Let $K_{\epsilon}$ be the index so that $k>K_{\epsilon}\implies M_0-u(x_k)<\frac{\epsilon}{2}$. Note that when $k>K_{\epsilon}$, $S\subset \Omega ^-_k$. Moreover, because $\frac{\epsilon}{2}\leq u(x_k)-u(s)\leq a-\epsilon$, and $\rho'$ has no zeros in this compact interval, there is a $C_\epsilon$ such that $\rho'(u(x)-u(s))\le C_\epsilon<0$. This means that when $k>K_{\epsilon}$ we can bound $I^-_k$ away from $0$
        \[
        -I^-_k\geq -\int_SK(x,y)\rho'(u(x_k)-u(y))dy\geq -\lambda C_\epsilon \mu(S)>0.
        \]

        This contradicts the fact that $I^-_k\to 0$ as $k\to \infty$ so we conclude that for any subset $S\subset \Omega$ with $M_0-a+\epsilon_0\leq u(S)\leq M_0-\epsilon_0$, we know that $\mu(S)=0$. Restated, we have shown that in the bounded domain $\Omega$, 
        \begin{equation}\label{EQ:StationaryBaseCaseEpsilon} u(s)\geq M_0-\epsilon\quad\text{ or }\quad u(s)\leq M_0-a+\epsilon\quad \text{ almost everywhere.}\end{equation} 

        Importantly, every step taken to arrive at \eqref{EQ:StationaryBaseCaseEpsilon} can still be done when $\epsilon$ is made smaller and so in the limit as $\epsilon\to 0$ we can conclude that
        
        \begin{equation}\label{EQ:StationaryBaseCase} u(s) = M_0\quad\text{ or }\quad u(s)\leq M_0-a\quad \text{ almost everywhere.}\end{equation} 
        To see the convergence argument, observe that if \eqref{EQ:StationaryBaseCase} was not true we would surely have that $|\{s\in \Omega;M_0-a<u(s)<M_0\}|>0$ and we can write this as the countable union 
        \[|\{s\in \Omega;M_0-a<u(s)<M_0\}|=\bigg|\bigcup_{\epsilon=\frac{1}{n}}\{s\in\Omega; \epsilon-a<u(s)<M_0<\epsilon\}\bigg|>0.\]
        In order for this to be positive, it must be positive for at least one $\epsilon$ and this could contradict \eqref{EQ:StationaryBaseCaseEpsilon}.

        Now, consider $\{u(x);x\in \Omega, u(x)\neq M_0\}$. If this set has measure 0 then we have completed the proof. If it has positive measure, let $M_1:=\esssup\{u(x);x\in \Omega, u(x)\neq M_0\}$ and note that $M_1\leq M_0-a$. Let $x_k\in \Omega$ so that $u(x_k)\leq M_1$ and $u(x_k)\to M_1$. Now partition the domain into 
        \begin{equation*}
            \begin{split}
            \Omega^{++}_k&:=\{x\in \Omega; u(x)\geq u(x_k)+a\}\\
                \Omega^+_k&:=\{x\in \Omega, u(x_k)<u(x)<u(x_k)+a\}\\
            \Omega^o_k&:=\{x\in \Omega; u(x)=u(x_k)\}\\
            \Omega^-_k&:=\{x\in \Omega; u(x)<u(x_k)\}
            \end{split}
        \end{equation*}
        and again we can split the integral into
        \begin{equation*}
            \begin{split}
                g[u](x_k)& =\underbrace{\int_{\Omega^{++}_k}K(x_k,y)\rho'(u(x_k)-u(y))dy}_{:=I_k^{++}= 0}+\underbrace{\int_{\Omega^+_k}K(x_k,y)\rho'(u(x_k)-u(y))dy}_{:=I_k^+\geq 0}\\
                &\quad+\underbrace{\int_{\Omega^o_k}K(x_k,y)\rho'(u(x_k)-u(y))dy}_{:=I_k^o=0} +\underbrace{\int_{\Omega^-_k}K(x_k,y)\rho'(u(x_k)-u(y))dy}_{:=I_k^-\leq0}
            \end{split}
        \end{equation*}

        In this way we are left with the same situation as before. $I^{++}_k=0$ because when $u(y)\geq u(x_k)+a$ then $u(x_k)-u(y)\leq -a$ and so $\rho'(u(x_k)-u(y))=0$ for all $y\in \Omega_k^{++}$. Thus, we note that $I^+_k=-I^-_k$ for all $k$. Again, for any $\beta$ we can find a $K_\beta$ so that $I^+_k<\beta$ for all $k>K_\beta$. This is because we proved in the first part of the proof that $u(x)\notin (M_0-a,M_0)$ except for possibly at a set of measure $0$. Thus the entire contribution to $I^+_k$ comes from $x\in \Omega^+_k$ with $u(x_k)<u(x)\leq M_1$ and we can repeat our previous argument exactly. Thus $u(x)\notin (M_1,M_1+a)$ except possibly at a set of measure $0$. Because the nonlocality will not see a set of measure $0$, it is an identical argument to extend the result in \eqref{EQ:StationaryBaseCase} to say that in $\Omega$,
        \[u(s)=M_0\quad \text{ or } \quad u(s)=M_1 \quad\text{ or } \quad u(s)\leq M_1-a \quad\text{ almost everywhere}\]

        This argument can be repeated indefinitely but notice that between each of the resulting bands in $u(\Omega)$ there is a gap of at least $a$. This means that, after a finite number of repetitions we will cover the entire range of the bounded solution, $u$. This means that, except for a possible set $Q$ which has measure 0, the image of $u(\Omega\setminus Q)$ is a finite set of points. Further, we can partition $\Omega$ into $A_0,...,A_m$ where $A_k:=\{x\in \Omega; u(x)=M_k\}$ (by the process described above each of these $A_i$ will have positive measure) and represent the solution $u$ as a simple function
        \[u(x)= \sum_{k=0}^mM_k\chi_{A_i=k}(x)\in L^\infty(\Omega)\] 
    \end{proof}

    This result is remarkable and gives us some insight into coordination dynamics at equilibrium. Although we cannot rule out the existence of a set of players with measure zero that do not adhere to one of a finite set of strategies, we can say that, in a weak sense, at equilibrium when no outside influences are acting on the system (i.e. no boundary data or inhomogeneity), the solution will be piecewise constant. If we know apriori that the solution is continuous then we can go even further.
    \begin{corollary}\label{COR:ContinuousStationarySolution}
        If $u$ is a continuous bounded stationary solution on a bounded domain with $\rho$ having only 1 zero in the interior of its support at $z=0$ and $K$ supported on the entire domain,  then $u$ is constant
    \end{corollary}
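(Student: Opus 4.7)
The plan is to combine the structural result of Theorem \ref{THM:StationarySolution} with the topological consequences of continuity. Theorem \ref{THM:StationarySolution} tells us that a bounded stationary solution takes values in a finite set $\{M_0, M_1, \ldots, M_m\}$ with consecutive values separated by at least $a$, except possibly on a set $Q \subset \Omega$ of Lebesgue measure zero. My goal is to upgrade ``almost everywhere'' to ``everywhere'' using continuity, and then invoke connectedness of $\Omega$ to force $m = 0$.

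First I would apply Theorem \ref{THM:StationarySolution} to obtain the finite set of values $M_0 > M_1 > \cdots > M_m$ and the partition $\Omega = Q \cup \bigcup_{k=0}^m A_k$, where $A_k = u^{-1}(\{M_k\})$ and $|Q| = 0$. Since $u$ is continuous and each singleton $\{M_k\}$ is closed in $\mathbb{R}$, each $A_k$ is closed in $\Omega$. Consequently the complement $Q = \Omega \setminus \bigcup_k A_k = u^{-1}(\mathbb{R} \setminus \{M_0, \ldots, M_m\})$ is open in $\Omega$, being the preimage of the open set $\mathbb{R} \setminus \{M_0, \ldots, M_m\}$ under the continuous map $u$. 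An open subset of $\mathbb{R}^n$ with Lebesgue measure zero must be empty, so $Q = \emptyset$ and therefore $u(\Omega) \subseteq \{M_0, \ldots, M_m\}$ exactly.

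Next I would use the fact that a domain $\Omega$ is, by convention in this paper, open and connected. The continuous image of a connected set is connected, so $u(\Omega)$ is a connected subset of $\mathbb{R}$, i.e. an interval. Since $u(\Omega)$ is simultaneously a finite collection of points separated by gaps of length at least $a > 0$, the only way it can be an interval is for it to be a single point. Hence $m = 0$ and $u \equiv M_0$ on $\Omega$.

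There is no serious obstacle here: the main content is already in Theorem \ref{THM:StationarySolution}, and the corollary is essentially the observation that the ``measure zero exceptional set'' in that theorem cannot survive the continuity hypothesis. The only step worth being careful about is confirming that $Q$ is open in $\Omega$ (so that the measure-zero conclusion forces $Q = \emptyset$) and invoking connectedness of $\Omega$ to rule out multiple plateaus; both are standard and should be stated explicitly for clarity.
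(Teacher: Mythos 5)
Your proof is correct, and since the paper states this corollary without providing an argument, your write-up supplies exactly the intended one: continuity makes the exceptional set $Q$ open, measure zero then forces $Q=\emptyset$, and connectedness of the domain collapses the finite, $a$-separated image to a single value. Your explicit flagging of connectedness is worthwhile, since on a disconnected open set the conclusion genuinely fails (e.g.\ $u\equiv M_0$ on one component and $u\equiv M_0-a$ on another is a continuous, bounded, nonconstant stationary solution), so the convention that a domain is connected is doing real work here.
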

    
    In order to see continuous and non-constant solutions, at least one of these assumptions must be broken. An easy example is when the boundedness of the solution and of the domain is violated. There are clear examples of solutions to $g[u]=0$ which are unbounded and not piecewise constant. 
    \begin{example}\label{EX:NonconstantStationarySolution}
        Let $k(x,y)=J(|x-y|)$ a radial, translation invariant kernel. Let $\rho$ be even so $\rho'$ is odd. For the domain $\mathbb{R}$, the solution $u(x)=x$ satisfies $g[u]=0$. 
    \end{example}
    \begin{proof}
        $\rho'$ is odd so for any $x$, $\rho'(u(x)-u(y))=\rho'(x-y)$ is odd in $y$. $J$ is even and translation invariant so again for any $x$ it is even in $y$. Thus $g[u](x)=\int_{\Rn}J(x-y)\rho'(x-y)dy=0$ for all $x\in \Omega$. 
    \end{proof}

    Another example may be if $\rho'$ is not supported in an interval around $z=0$. In this case, regardless of $K$ we can construct a continuous non-constant stationary solution
    
    \begin{example}
        $K$ is translation invariant and has support in $B_\delta(0),$ and $\rho$ satisfies 
        \[
            \rho(z) =\begin{cases}
            1&|z|<r_0\\
            0&|z|\geq r_1
        \end{cases}
        \]
          where $r_0 <r_1$ and  $0\leq \rho(z) \leq 1$ when $r_0\leq |z|\leq r_1$.
    than any $u$ which is globally Lipschitz with Lipschitz constant $\frac{r_1}{\delta}$ or which satisfies $\sup(u)-\inf(u)<r_0$ is a stationary solution.
    \end{example}

    These results about stationary solutions are helpful because they give us a way of narrowing down the search for Nash equilibria in the classical game. By proposition \ref{PROP:StationarySolutions} and by Theorem \ref{THM:StationarySolution}, we know that any bounded Nash equilibrium of the game with appropriate $K$ and $\rho'$ on a bounded domain will have a finite image when at most a set of measure 0 is excluded from the domain. We do not expect this result to hold in the case that there are boundary data nor do we expect this to hold if we consider the inhomogeneous problem discussed in section \ref{SEC:Inhomogeneous}.

    From the proof of Theorem \ref{THM:StationarySolution}, we can see that there is upper bound on the number of points present in the image of a stationary solution. Namely, if $b$ is the number of points in the image (which will appear as bands on a bifurcation diagram in the next subsection), $R:=\sup_{x\in\Omega}u(x)-\inf_{x\in\Omega}u(x)$ and $r$ is the radius of $supp(\rho')$, then $b\leq\lfloor\frac{R}{r}\rfloor+1$. Notably this means that if $\rho'$ is supported on all of $\mathbb{R}$ and has only one zero at $z=0$ then a stationary solution will be constant except possibly at a set of measure zero. This upper bound is sharp as we can construct stationary solutions which satisfy $b=\lfloor\frac{R}{r}\rfloor+1$ easily.
    \[u(x)=\sum_{k=1}^{b}rk\chi_{S_k}=\begin{cases}
        r&x\in S_1\\
        2r&x\in S_2\\
        \vdots&\\
        br&x\in S_{b}
    \end{cases}\]
    Where $\{S_i\}_{i=1}^b$ partitions $\Omega$. This result characterizes the stationary solutions but, because of the lack of a time convergence result, we cannot yet unite the dynamics described in diffusion problem $u_t=g[u]$ with their apparent limits rigorously. In order to get some idea of how the time dependent coordination process proceeds towards equilibrium, we use several numerical experiments. 
    \subsection{Numerical Experiments}\label{SUBSEC:NumericalExperiments}
    From the above results about stationary solutions we seek to investigate the behavior of solutions as time tends towards infinity. We conducted several numerical experiments employing the numerical methods described in section \ref{SEC:NumericalResults}. The code used to run the experiments can be found at \cite{McAlister2025}. The first experiment considered the interval $I=[-1/2,1/2]$ and the initial data $u_0(x)=lx$ where $l$ was varied from $0$ to $4$. With a Gaussian kernel $K(x,y)=\frac{1}{s\sqrt{2\pi}}e^\frac{{(x-y)}^2}{-2s^2}$ (where the kernel concentrates around $x$ as $s$ increases) and a compactly supported recognition function $\rho(z) = e^{\frac{-1}{1-(z/r)^2}}$ when $|z|<r$ and $0$ otherwise, the experiment examines the profile of the solution after the IVP has been solved until $t=20$. The system parameters were chosen as $s=0.5$ and $r=0.2$. The results are shown in Figure \ref{fig:NM1}
    \begin{figure}
        \centering
        \includegraphics[width=\linewidth]{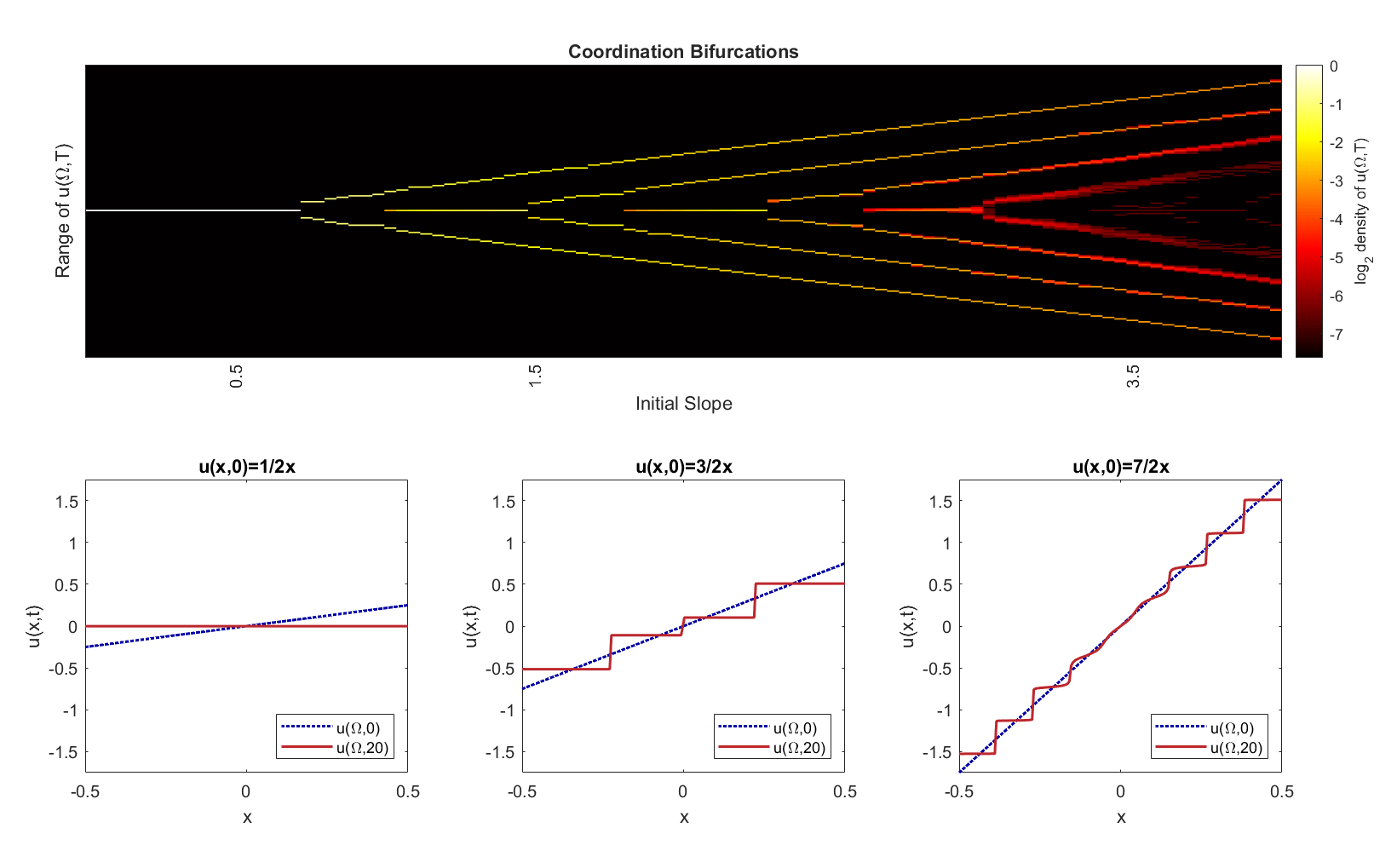}
        \caption{\textbf{Top} A bifurcation-type diagram showing that as the slope of the initial data increases, the distribution of the solution $u(x,20)$ changes. On the far left, when the slope is low, the image in entirely distributed at $u(x)-=0$. As the initial slope increases there are more bands. The colors on the heat map represent the $log_2$ of the density of $u(x,20)$.
        \textbf{Bottom} Three examples of initial conditions (dotted in blue) and solutions at time $t=20$ (solid in red). The corresponding slices in the bifurcation diagram are labeled in order on the $x$ axis of the heatmap above.}
        \label{fig:NM1}
    \end{figure}

    An interesting observation about the number of bands present at time $t=20$ is the approximate equal spacing between the non-central bands. All of the non-central bands are separated by $0.4$, which is the diameter of the support of $\rho'$. This is unlike the example from subsection \ref{SUBSEC:StationarySolutions} where the minimum distance between bands could be only the radius of the support.  When there are two central bands, they can achieve this minimum separation but will grow apart as $l$ increases. It is exactly when the distance between the bands exceed $0.4$ when the appearance of a single central band between them emerges. Following this pattern we can predict, for a linear initial condition, how many bands there will be. No two bands can be closer than 0.2 (half the support $\rho'$) and no two bands can be further than 0.4. For this setting in particular, at most 2 spaces between bands will be less than 0.4. Therefore we can arrive at the bound that the number of bands $b$ is limited by 
    \[b\leq \frac{R}{2r}+1\]
    where again $R$ is the length of the range of $u_0$ and $r$ is the radius of the support of $\rho'$. This bound is smaller than the upper bound from subsection \ref{SUBSEC:StationarySolutions} which may be related to the stability of these stationary solutions. 

    Another interesting observation which is not explored further in the present study is the fact that the bands seems to start to form from the outside then proceed in. In the case where $l=3.5$ in figure \ref{fig:NM1} we can see that the most central bands have not formed by the time $t=20$. This points to a possible finite propagation speed. As in example \ref{EX:NonconstantStationarySolution}, when the solution $u(x,t)$ is odd about a point $x_0$ and the kernel is even, then $u_t(x_0,t)=0$. In the exact center the solution is odd for all time in each of these experiments. Close to the center the solution ``almost" has odd symmetry and near the edge of the range, the solution has no odd symmetry. For points near the center, the lack of odd symmetry far away is hidden by the fact that the kernel $K$ is small at that distance. For points close to the edge of the range, the immediate lack of odd symmetry results in an immediate collapse into discrete strategies, this collapse ruins the odd symmetry closer to the center and so the collapse propagates to the center (Fig \ref{fig:NM4}).
    \begin{figure}
        \centering
        \includegraphics[width=\linewidth]{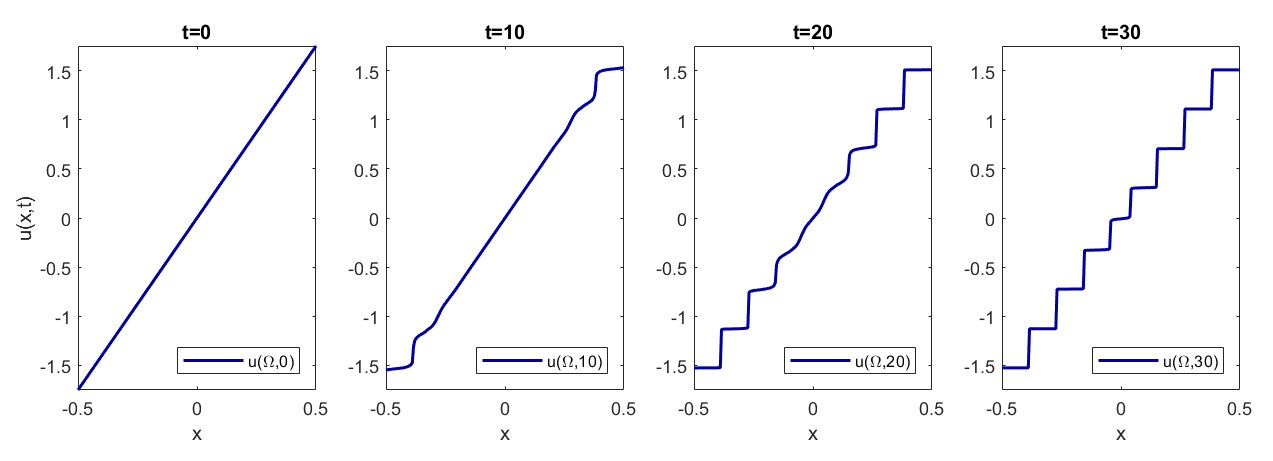}
        \caption{Four time slices of the solution to $u_t=g[u]$ when $u_0=7/2x$ on the domain $\Omega =[-2,2]$. The collapse into discrete strategies starts at the edge of the domain and propagates towards the center.}
        \label{fig:NM4}
    \end{figure}

    We use the next two numerical experiments to test the hypothesis about the minimum band separation. Using a sigmoid function instead of a linear function for the initial data we can investigate the number of bands that emerge while the range of the initial data is constrained. For the same kernel and recognition function as before, we run the same experiment with $u_0=(1+\exp(-lx))$ for $l\in [0,15]$. The results are shown in figure \ref{fig:NM2} 

    \begin{figure}
        \centering
        \includegraphics[width=\linewidth]{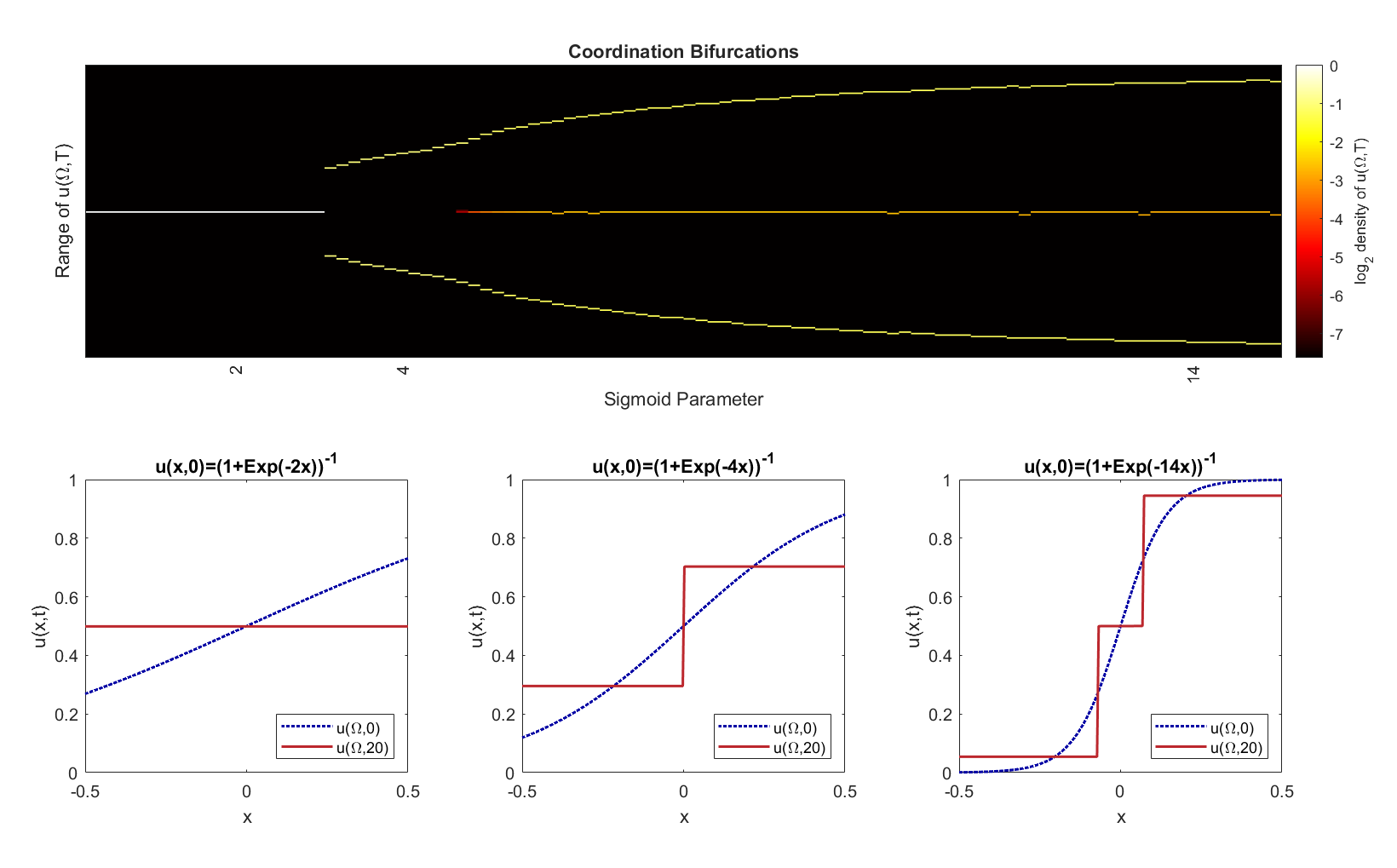}
        \caption{\textbf{Top} A bifurcation-type diagram showing that as the sigmoid parameter of the initial data increases, the distribution of the solution $u(x,20)$ changes. On the far left, when the sigmoid parameter is low, the image is entirely distributed at $u(x)=0$. As the sigmoid parameter for the initial data increases, there are two then later three bands but as the sigmoid parameter tends towards its maximum, the bands level out. The colors on the heat map represent the $log_2$ of the density of $u(x,20)$.
        \textbf{Bottom} Three examples of initial conditions (dotted in blue) and solutions at time $t=20$ (solid in red). The corresponding slices in the bifurcation diagram are labeled in order on the $x$ axis of the heatmap above.}
        \label{fig:NM2}
    \end{figure}
    Again we notice that the number of bands increases with the range of the initial data. It is straight forward to observe that the relationship from the first numerical experiment holds in this case too. Because the range of the initial data is constrained to $(0,1)$ we never see more than three bands emerge. This is precisely what is predicted from the hypothesis previously stated. 

    The sigmoid example shows us also that it is not a critical gradient threshold that results in discontinuities in the limit. The derivative is greatest in at $x=0$ initially in every example, however, the discontinuities do not always emerge at the center. When the sigmoid parameter is greater than $5$ and there are three bands present, the central band is centered around $x=0$ and the discontinuities are present on either side of this band. Therefore, when trying to predict the location of bands present at equilibrium (assuming a solution converges to an equilibrium) is not as simple as finding where the gradient might surpass a certain threshold. 

    To further test this hypothesis, we vary the support of the recognition function rather than the initial data. In the third numerical experiment, the parameter $r$ in the recognition function 
    \[\rho(z)=\begin{cases}e^\frac{-1}{1-(z/r)^2} & |z|<r\\
    0&|z|\geq r\end{cases}\]
    is varied from $0$ (the limit in which $\rho(z)=\chi_{\{0\}}(z)$) to $0.8$. The results are shown in figure \ref{fig:NM3} 

    \begin{figure}
        \centering
        \includegraphics[width=\linewidth]{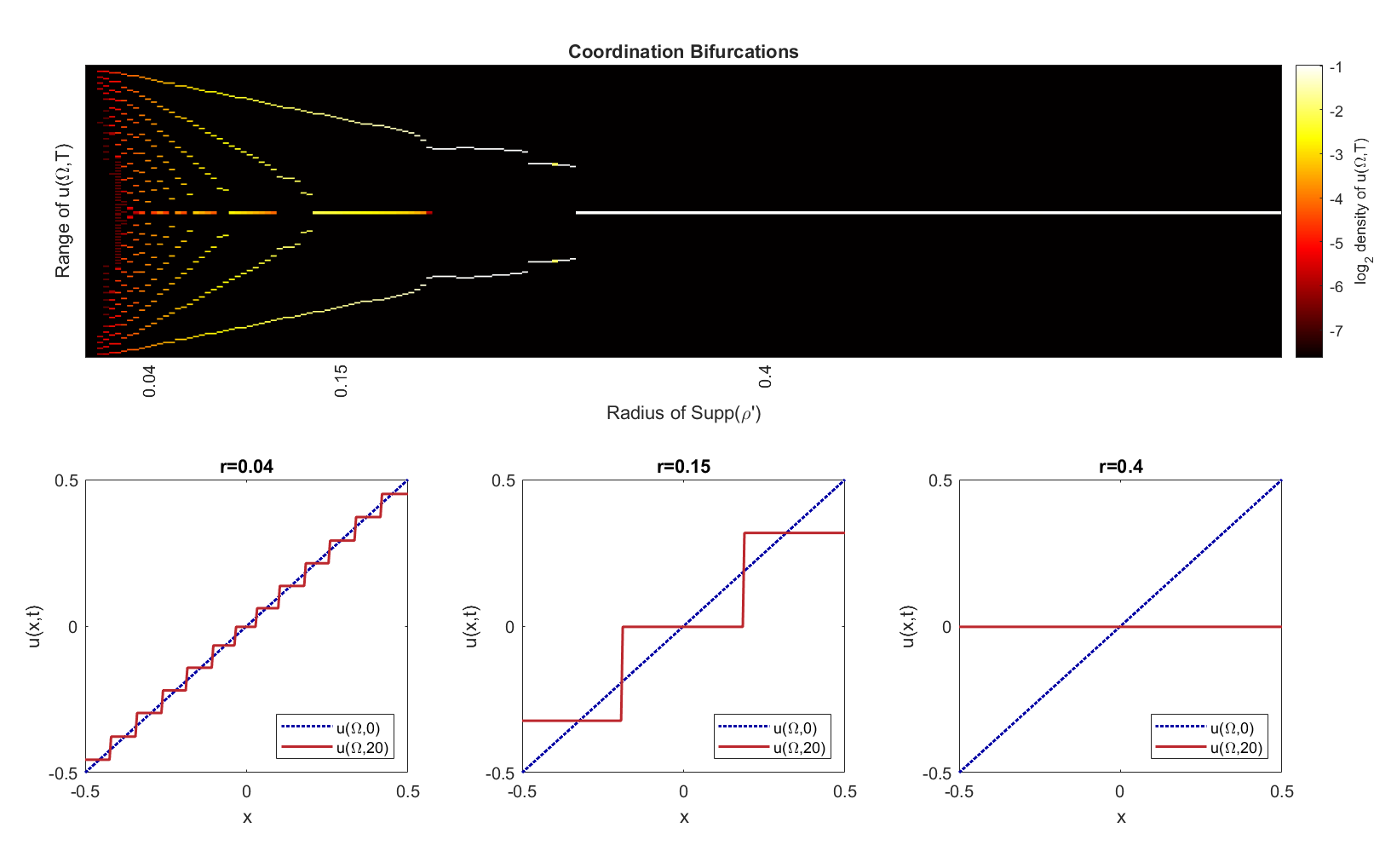}
        \caption{\textbf{Top} A bifurcation-type diagram showing that as support of the recognition function changes, the distribution of the solution $u(x,20)$ changes. On the far left, when the support of $\rho$ is near $0$, there are many bands. As the support of the recognition function increases, there are fewer and fewer bands until the solution collapses to $u(x)= 0$. The colors on the heat map represent the $log_2$ of the density of $u(x,20)$.
        \textbf{Bottom} Three examples of initial conditions (dotted in blue) and solutions at time $t=20$ (solid in red). The corresponding slices in the bifurcation diagram are labeled in order on the $x$ axis of the heatmap above.}
        \label{fig:NM3}
    \end{figure}

    The results of the experiment are predicted exactly by our hypothesis. Again, in every case, the number of bands is bounded above by $(R/2r)+1$.

    These numerical experiments obviously fall far short of a real proof, and without a time-convergence result such a proof is likely impossible. However, they do force us to consider which stationary solutions are discoverable (i.e. they have a non-trivial basin of stability) and which are not. 

    \section{Towards an Inhomogeneous Problem}\label{SEC:Inhomogeneous}
    Recall that this model is inspired by a game theoretic situation in which two players are interacting and the payoff of their interaction depends only on the ``distance" between their strategies. We can extend this idea to consider payoff separated into two parts, the extrinsic payoff, which will still only depend on the distance between the strategies, and the intrinsic payoff which will depend only on the position of the player and, perhaps, the time. Consider, as in section \ref{SEC:Extension} a game with a discrete number of strategies where every pairwise interaction is subject to the same extrinsic payoff, given by the payoff matrix $A$ but there is an additional intrinsic payoff which is dependent on the player $x$, their strategy $u(x)$ and the time $t$. The payoff matrix therefore can be decomposed as
    
   \[A=B+C(x,t)\] 
    where $B$ is Toeplitz as before and $C(x,t)$ has constant rows for every $(x,t)\in \Omega_T$ (i.e. $C(x,t)=[c(x,t)\;c(x,t)\;\dots \;c(x,t)]$ where $c(x,t)$ is a column vector). We can rewrite our equation \eqref{ctsSpacePayoff} as
    \begin{equation*}
        w(x|u)=\int_\Omega K(x,y)u(x)^TBu(y)dy+\int_\Omega K(x,y)u(x)^T C(x,t)u(y)dy.
    \end{equation*}
    However, because of the form of $C(x,t)$,  $C(x,t)u(y)=c(x,t)\sum u(y)_i$.  In both the pure strategy and mixed strategy cases, $\sum u(y)_i=1$ so this is independent of $y$ and we can write 
    \begin{equation*}
        w(x|u)=\int_\Omega K(x,y)u(x)^TBu(y)dy+\langle u(x), c(x,t)\rangle \|K(x,\cdot)\|_{L^1(\Omega)}
    \end{equation*}

    In the same way as before, the extension into continuous strategy space is easy. the matrix $B$ and vector $c$ become infinite dimensional and we replace them with the functions $\rho$ and $F$ respectively. Thus we get our continuous strategy space functions
    \begin{equation}\label{inhompayoff}
        w(x|u)=\underbrace{\int_\Omega K(x,y)\rho(u(x)-u(y))dy}_{extrinsic}+ \underbrace{F(u(x,t),x,t)}_{intrinsic}
    \end{equation}

    We will now consider the requirements on $F$ that ensure that the model is still well founded. 

    \begin{proposition}\label{PROP:InhomogeneousModelWellFounded}
        Bounded strategy profiles of the game with players in $\Omega \subset \Rn$ choosing strategies in $\mathbb{R}$ by myopic best response with fitness as in \eqref{inhompayoff} will evolve as
        \begin{equation*}
            \frac{\partial}{\partial t}u(x,t)=g[u](x,t)+\partial_1 F(u(x,t),x,t)
        \end{equation*}
        Under the hypotheses of proposition \ref{PROP:ModelWellFounded} with the additional hypothesis that
        \begin{itemize}
            \item[(H4)] $F$ has a Lipschitz derivative with respect to its first argument and $F(z,x,t)\leq C_2z^2+A_3$ for some non-negative $C_2$ and $A_3$ uniformly in $\Omega_T$
        \end{itemize}
    \end{proposition}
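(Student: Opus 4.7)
The plan is to mirror the proof of Proposition \ref{PROP:ModelWellFounded} step by step, treating the extra term $F(u(x,t)+h,x,t)$ as an additive perturbation to the payoff-under-deviation function. Specifically, I would define
\[
S_x(h):=\int_\Omega K(x,y)\rho(u(x,t)+h-u(y,t))\,dy \;+\; F(u(x,t)+h,x,t)
\]
and derive the evolution equation by maximizing $S_x(h)-h^2/\Delta t$ in $h$ and sending $\Delta t\to 0$, just as before. The only real work is to check that the two hypotheses needed in the original proof (uniform subquadratic upper bound on $S_x(h)$ in $h$, and local Lipschitz continuity of $\partial_h S_x$) still hold when the $F$ term is included.

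First I would verify the subquadratic bound. The integral piece is controlled exactly as in Proposition \ref{PROP:ModelWellFounded} by $C_1h^2+B_1h+A_2$ using hypothesis (H3), boundedness of $u$, and the uniform bound on $\|K(x,\cdot)\|_{L^1(\Omega)}$. Hypothesis (H4) directly gives $F(u(x,t)+h,x,t)\leq C_2(u(x,t)+h)^2+A_3\leq 2C_2h^2+(2C_2\|u\|_\infty^2+A_3)$ uniformly on $\Omega_T$, which adds only another quadratic-in-$h$ term with $x,t$-independent coefficients. Adding these two bounds yields a uniform subquadratic estimate $S_x(h)\leq \widetilde C h^2+\widetilde B h+\widetilde A$, which is what the existence-of-global-maximizer argument requires after subtracting $h^2/\Delta t$.

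Next I would verify that $\partial_h S_x$ is locally Lipschitz in $h$ uniformly in $x$. Differentiating under the integral gives
\[
\frac{d}{dh}S_x(h)=\int_\Omega K(x,y)\rho'(u(x,t)+h-u(y,t))\,dy+\partial_1 F(u(x,t)+h,x,t).
\]
The integral term is handled exactly as in \eqref{EQ:SinC11} using the Lipschitz constant $L_\rho$ of $\rho'$ on the relevant compact interval and the uniform $L^1$ bound on $K(x,\cdot)$. The $\partial_1 F$ term contributes a difference bounded by $L_F|h_1-h_2|$ where $L_F$ is the Lipschitz constant of $\partial_1 F$ in its first argument (uniform in $(x,t)$ by (H4)). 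Summing gives a uniform local Lipschitz constant $L_S'$ for $\partial_h S_x$, playing exactly the role of $L_S$ in the original proof.

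With these two facts in hand, the remainder of the argument transcribes verbatim: for $\Delta t$ small enough, $S_x(h)-h^2/\Delta t$ admits a global maximizer $h^*$ satisfying $\partial_h S_x(h^*)=2h^*/\Delta t$; the same Lipschitz-based squeeze inequalities produce
\[
\frac{u(x,t+\Delta t)-u(x,t)}{\Delta t}\to \tfrac{1}{2}\,\partial_h S_x(0) \quad\text{as }\Delta t\to 0,
\]
and evaluating
\[
\partial_h S_x(0)=\int_\Omega K(x,y)\rho'(u(x,t)-u(y,t))\,dy+\partial_1 F(u(x,t),x,t)=g[u](x,t)+\partial_1 F(u(x,t),x,t)
\]
followed by the same trivial rescaling of space-time gives the claimed equation. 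The main potential obstacle is purely bookkeeping, namely ensuring that the constants coming from $F$ are genuinely uniform in $(x,t)\in\Omega_T$, which is precisely what (H4) is tailored to guarantee; no new analytic idea beyond the original proof is required.
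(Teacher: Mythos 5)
Your proposal is correct and follows essentially the same route as the paper: define the augmented $S_x(h)$ with the $F$ term, re-verify the uniform subquadratic bound and the local Lipschitz continuity of $\partial_h S_x$ using (H4), and then transcribe the maximizer/squeeze argument from Proposition \ref{PROP:ModelWellFounded}. Your derivation of the bound $F(u(x,t)+h,x,t)\leq 2C_2h^2+(2C_2\|u\|_\infty^2+A_3)$ is in fact slightly more explicit than the paper's, which simply asserts the subquadratic-in-$h$ estimate from (H4).
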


    \begin{proof}
        As in proposition \ref{PROP:ModelWellFounded} we will consider a bounded strategy profile $u(\cdot, t):\Omega\to \mathbb{R}$. Every player will seek to update their strategy by some amount $h$ in order to take on their best response to $u(\cdot, t)$ after a time step of $\Delta t$ and in doing so they incur a cost of $\frac{h^2}{\Delta t}$. Let
        \begin{equation*}
            S_x(h):=\int_\Omega K(x,y)\rho(u(x,t)+h-u(y,t))dy+F(u(x,t)+h,x,t)
        \end{equation*}
        be the payoff player $x$ receives after changing their strategy by $h$. From the computations in the proof of proposition \ref{PROP:ModelWellFounded} we know that 
        \begin{equation*}
            \int_\Omega k(x,y)\rho(u(x,t)+h-u(y,t))dy\leq C_1h^2+B_1h+A_2
        \end{equation*}
        Additionally from $(H4)$ $F(u(x,t)+h,x,t)\leq {C}_2h^2+ A_3$ for some ${C}_2$ and ${A}_3.$ Therefore we have immediately that 
        
       \[S_x(h)\leq C_3h^2+A_4\] 

        We also need to show that $S_x(h)$ has a Lipschitz derivative. As in proposition \ref{PROP:ModelWellFounded} the first term of $S_x(h)$ has a locally Lipschitz derivative. Moreover, by $(H4)$ the second term also has a locally Lipschitz derivative with respect to $h$. 

        Having argued that $S_x(h)$ is uniformly subquadratic in $h$ and has a locally Lipschitz derivative in $h$ we note that the quantity $S_x(h)-\frac{h^2}{\Delta t}\leq C_4h^2+A_5$ for some negative $C_4$ when $\Delta t$ is sufficiently small. Therefore there is certainly a global maximizer $h^*$ and that maximizer satisfies 

       \[\frac{d}{dh}S_x(h^*)=2\frac{h^*}{\Delta t}.\]

        It is the same process as in proposition \ref{PROP:ModelWellFounded} that allows us to find that in the limit as $\Delta t\to 0$
        
       \[\frac{\partial}{\partial t}u(x,t)=\frac{1}{2}\frac{d}{dh}S_x(0)\]
        We note that
        
       \[\frac{\partial}{\partial h}S_x(0)=\int_\Omega K(x,y)\rho'(u(x,t)-u(y,t))dy+\partial_1F(u(x,t),x,t)\] 
       and do a trivial rescaling of space time to complete the proof.  
    \end{proof}

    For the proceeding let $f(u,x,t):=\partial_1F(u(x,t),x,t)$ and we are left with the simple inhomogeneous version of the nonlinear nonlocal diffusion problem we have been discussing
    \begin{equation}\label{inhom}
        u_t-g[u](x,t) =f(u,x,t)
    \end{equation}
    where $\rho$ and $f$ are both subquadratic for the game theoretic application to be sensible. It is not a difficult task to prove short time existence and uniqueness for solutions in this case. The theorem is written here and the proof is included in appendix \ref{APP:AdditionalInhomogeneousResults} because of its similarity to the proof of theorem \ref{THM:ShortTimeExistenceAndUniqueness}

    \begin{theorem}[Short time existence and Uniqueness for the inhomogeneous case]\label{THM:InhomogeneousExistenceAndUniqueness}
    The initial value problem $u_t-g[u]=f(u,x,t)$ has a unique continuous and bounded solution in $\Omega_T$ for some $T$ when $u(x,0)=u_0\in C_b^0(\Omega), \rho\in C^{1,1}(\mathbb{R})$, $K\in C^0_b(\Omega;L^1(\Omega))$ and $f$ is Lipschitz continuous with respect the the first variable with a Lipschitz constant which does not depend on time, is continuous with respect to space and time, and is bounded 
    \end{theorem}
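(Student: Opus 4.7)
The plan is to imitate the Picard-iteration proof of Theorem \ref{THM:ShortTimeExistenceAndUniqueness} almost verbatim, with the operator enlarged to absorb the inhomogeneity. Define
\[
\Theta u(x,t) := u_0(x) + \int_0^t \bigl(g[u](x,s) + f(u(x,s),x,s)\bigr)\, ds,
\]
and seek a unique fixed point in the closed subset $E_{R,T} := \{u \in C_b^0(\Omega_T);\, u(\cdot,0)=u_0,\, \|u\|\leq R\}$ of $C_b^0(\Omega_T)$ equipped with the sup norm, for some $R > \|u_0\|_\infty$ and $T$ to be chosen later. As before, $E_{R,T}$ is a complete metric space, nonempty (it contains the constant-in-time extension of $u_0$), so a Banach Fixed Point Theorem argument applies.

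First I would verify $\Theta: E_{R,T}\to E_{R,T}$ for $T$ sufficiently small. Lemma \ref{LEM:welldefined} gives $g[u] \in C_b^0(\Omega_T)$ with a uniform bound $\|g[u]\|_\infty \leq WD$ whenever $\|u\|\leq R$ (where $W$ depends on $R$ through the bound for $\rho'$ on $[-2R,2R]$). By the hypotheses on $f$, the map $(x,s)\mapsto f(u(x,s),x,s)$ is continuous on $\Omega_T$ and bounded uniformly by some $F_R$ when $\|u\|\leq R$ (boundedness is assumed directly; continuity follows from the Lipschitz dependence on the first argument together with continuity in $(x,t)$ and the continuity of $u$). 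Hence $\Theta u \in C_b^0(\Omega_T)$ with $\Theta u(\cdot,0) = u_0$, and one picks $T_R>0$ so that $\|u_0\|_\infty + T_R(WD + F_R) \leq R$, which ensures $\|\Theta u\|\leq R$.

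Next I would show $\Theta$ is a contraction on $E_{R,T}$ for possibly smaller $T$. For $u,v\in E_{R,T}$,
\[
\|\Theta u - \Theta v\| \leq \sup_{t\in[0,T]} \int_0^t \Bigl(\|g[u](\cdot,s)-g[v](\cdot,s)\|_\infty + \|f(u(\cdot,s),\cdot,s)-f(v(\cdot,s),\cdot,s)\|_\infty\Bigr)\, ds.
\]
Lemma \ref{LEM:Lipschitzg} bounds the first integrand by $C^g\|u(\cdot,s)-v(\cdot,s)\|_\infty$. The assumed Lipschitz continuity of $f$ in the first variable, with a constant $L_f$ that is independent of $(x,t)$ since it is uniform in time and $x$, bounds the second by $L_f\|u(\cdot,s)-v(\cdot,s)\|_\infty$. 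Therefore $\|\Theta u - \Theta v\| \leq (C^g + L_f)\,T\,\|u-v\|$, and any $T < \min\{T_R,\, 1/(2(C^g+L_f))\}$ makes $\Theta$ a strict contraction. The Banach Fixed Point Theorem produces a unique $u\in E_{R,T}$ with $\Theta u = u$, which by differentiation of the integral formulation is the unique $C_b^0$ solution of the inhomogeneous IVP on $\Omega_T$. The extension principle at the end of the proof of Theorem \ref{THM:ShortTimeExistenceAndUniqueness} carries over unchanged to yield a maximal existence time $\tau$.

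The main obstacle is essentially only bookkeeping: one must ensure that the Lipschitz estimate on $f$, which is pointwise in $(x,t)$, upgrades to a sup-norm Lipschitz estimate on $\Omega_T$, and this is exactly where the hypothesis that $L_f$ is independent of $t$ (and uniform in $x$) is used. No genuinely new analytic difficulty beyond the homogeneous case arises; the subquadratic growth of $F$ plays no role in this short-time result, since boundedness of $f$ on bounded $u$-ranges already suffices for both the self-mapping property and the contraction estimate.
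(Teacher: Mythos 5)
Your proposal is correct and follows essentially the same route as the paper's proof in the appendix: the same fixed-point operator $u\mapsto u_0+\int_0^t(g[u]+f(u,\cdot,s))\,ds$ on the same closed ball $E_{R,T}$, with the self-mapping property obtained from the uniform bounds $WD$ and $F_R$ and the contraction constant $(C^g+L_f)T$. The only cosmetic difference is that the paper splits the budget $R-\|u_0\|_\infty$ into two halves (one for $g$, one for $f$) rather than bounding the sum at once, which changes nothing.
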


    The proof proceeds practically identically to that of existence and uniqueness in the homogeneous case as it uses a BFPT argument. Finding global existence or even a finite time blow up result in the inhomogeneous case is certainly much harder. The Main issue is the nonlinearity of the nonlocality and so the standard Duhamel's property does not apply in this case. 

    \section{conclusion}\label{SEC:conclusion}
    In this study we have extended the game theoretic treatment of structured coordination to allow for continuous pure strategies and provide novel critical insights, extending the application areas beyond those for which only the traditional, discrete pure strategies of discrete player spaces are appropriate. With certain reasonable hypotheses on the components of the fitness function, we were able to show rigorously that through a myopic best response like update rule the situation can be modeled in continuous time by way of a nonlinear nonlocal equation, similar to existing nonlocal diffusion problems. 

    The nonlinearity in this model prevents the use of Fourier analysis, semi-group theory, or comparison principles in proving our results, but through elementary analysis and PDE techniques we were able to determine short time existence and uniqueness for the general setting. With some additional requirements on $\rho$, we can strengthen this to find global existence and uniqueness. In addition to these results, we found that solutions with Lipschitz initial data remain Lipschitz continuous although the modulus of continuity may increase exponentially. In the special case of the Cauchy problem, the Lipschitz constant does not depend on the shape of the kernel, $K$ as long as it is translation invariant. 

    After giving several trivial analytical examples, we showed that simple numerical methods are stable and convergent, which allows us to visually examine solutions to the initial value problem without boundary data prescribed. Using these results we were able to carry out several numerical examples which supported the analytical results about stationary solutions. Finally, we considered the inhomogeneous problem and again showed short time existence and uniqueness of solutions to the IVP with no boundary data.

    Not only do these results help us understand the model as a way of discussing coordination in space, but they also represent advances in our understanding of nonlinear, nonlocal diffusion problems. On the modeling side, we see examples of solutions seeming to converge towards non-constant equilibria, the continuous analog of the non-trivial equilibria in the discrete case. Moreover, when discontinuities in strategy emerge, we can determine how quickly and how severely they can appear.  By characterizing stationary solutions, we can significantly advance our understanding of both the classical game and nonlinear nonlocal diffusion problems. The concentration of strategies appears as a smoothening in some parts of the domain and tends to create discontinuities in other parts of the domain. Further study into nonlinear nonlocal diffusion problems is required to describe the asymptotic behavior of solutions and the stability of stationary solutions. From this study, it is clear that understanding coordination as a nonlinear nonlocal diffusion problem allows us to examine the system in exciting and novel ways making accessible insights that were previously impossible.

Both J.S.M. and T.A.M. contributed to the formal analysis and investigation. J.S.M. and N.H.F. Conceptualized the work. J.S.M. wrote the software and wrote the original draft.\newline

The authors declare no competing financial interests.

\section{\label{DataAvail}Data Availability Statement}
All of the software which was used to run the numerical simulations and generate the figures in this manuscript can be found at the repository linked here: \url{https://github.com/feffermanlab/JSM_2024_ContinuousCoordination/releases/tag/v1.0.0}. It is also cited as \cite{McAlister2025}.

\appendix  
\section{Alternative Proof for Global Existence}\label{APP:AltProofForGlobalEx}
Recall theorem \ref{THM:GlobalExistenceAndUniqueness} when $\rho\in C^{1,1}$ satisfies the coordination property \eqref{coordinationcondition}. Under these conditions the a solution to the the initial value problem $u_t=g[u]$ exists and is unique for all finite time. The short proof is included in the main text, but there is also an alternative proof which reveals the repeatability of the extension principle as described in the proof of theorem \ref{THM:ShortTimeExistenceAndUniqueness}

    \begin{theorem*}[Global existence and uniqueness with particular $\rho\in C^{1,1}$]
        Let $\rho\in C^{1,1}(\mathbb{R})$ satisfy \eqref{coordinationcondition}. Under this strengthened hypothesis, the Initial Value Problem $u_t=g[u]$ with $u(x,0)=u_0\in C_b^0(\Omega)$ has a unique continuous and bounded solution for all finite time.
    \end{theorem*}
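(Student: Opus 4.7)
The plan is to iterate Theorem \ref{THM:ShortTimeExistenceAndUniqueness} a countable number of times, exploiting the fact that, under the coordination assumption \eqref{coordinationcondition}, Lemma \ref{LEM:MaximumPrinciple} supplies a uniform a priori bound on the solution which makes every iteration ``the same size''. This is in contrast to the generic extension argument sketched at the end of the proof of Theorem \ref{THM:ShortTimeExistenceAndUniqueness}, where at each step one may need a larger $R$, and the resulting minimal existence time may shrink to zero before infinity is reached.

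Concretely, first I would fix once and for all a radius $R:=\|u_0\|_\infty+1$. Reading off the proof of Theorem \ref{THM:ShortTimeExistenceAndUniqueness}, this choice of $R$ determines a Lipschitz constant $C^g=C^g(R)$ for $g$ on $X_R$ (via Lemma \ref{LEM:Lipschitzg}) and a time $T_R$ (depending only on $R$, $\rho$, and $\|K\|$) such that the Picard map $\Theta$ sends $E_{R,T}$ into itself whenever $T\le T_R$. Set
\[
T^\ast:=\min\bigl\{T_R,\tfrac{1}{2C^g(R)}\bigr\}>0,
\]
a quantity that does \emph{not} depend on the particular initial datum, only on $R$, $K$, and $\rho$. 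Theorem \ref{THM:ShortTimeExistenceAndUniqueness} then yields a unique solution $u\in C^0_b(\Omega\times[0,T^\ast])$ to the IVP with $u(\cdot,0)=u_0$.

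Next I would feed the solution back into itself. By Lemma \ref{LEM:MaximumPrinciple}, $\|u(\cdot,T^\ast)\|_\infty\le \|u_0\|_\infty<R$, so the function $u_1:=u(\cdot,T^\ast)$ is an admissible new initial datum for which exactly the same radius $R$ and hence the same $T^\ast$ work. Applying Theorem \ref{THM:ShortTimeExistenceAndUniqueness} to the translated IVP $v_t=g[v]$ with $v(\cdot,0)=u_1$ produces a unique $v\in C^0_b(\Omega\times[0,T^\ast])$; gluing $u$ on $[0,T^\ast]$ with $v(\cdot, t-T^\ast)$ on $[T^\ast,2T^\ast]$ gives a continuous bounded solution on $[0,2T^\ast]$, and uniqueness on overlaps is guaranteed by Theorem \ref{THM:ShortTimeExistenceAndUniqueness} (applied on any subinterval where both representations are valid). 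Iterating, after $n$ steps one obtains a unique solution on $[0,nT^\ast]$, and since $T^\ast$ is independent of $n$ we have $nT^\ast\to\infty$, so the solution exists for all finite time.

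The only step that requires care is verifying that the time increment $T^\ast$ really can be chosen \emph{once} and reused, and this is precisely where the coordination assumption enters. Without \eqref{coordinationcondition} one would only know a priori that $\|u(\cdot,T^\ast)\|_\infty\le\|u_0\|_\infty+T^\ast\sup\|g[u]\|_\infty$, forcing a strictly larger $R$ at each iteration, a correspondingly smaller $T^\ast$, and possibly a finite accumulation time; the maximum principle replaces this potentially shrinking sequence by a constant one, which is the whole content of the argument. Once this observation is in place, the rest is a routine concatenation plus uniqueness check, and it transparently displays the ``repeatable extension'' structure that was implicit in the proof of Theorem \ref{THM:ShortTimeExistenceAndUniqueness}.
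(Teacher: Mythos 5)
Your proposal is correct, and it is in fact one of the two arguments the paper gives: the iteration-with-uniform-time-step argument you describe is exactly what is sketched in the paragraph preceding Theorem~\ref{THM:GlobalExistenceAndUniqueness} and written out in full as the ``alternative proof'' in Appendix~\ref{APP:AltProofForGlobalEx}. The proof displayed under the theorem statement itself takes a shorter route: it combines the weak maximum principle (Lemma~\ref{LEM:MaximumPrinciple}) with the finite-time blow-up dichotomy (Lemma~\ref{LEM:FiniteTimeBlowup}) --- if the maximal existence time $T$ were finite then $\|u(\cdot,t)\|_\infty\to\infty$ as $t\to T$, which the a priori bound $\|u(\cdot,t)\|_\infty\le\|u_0\|_\infty$ forbids. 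The blow-up argument is cleaner once Lemma~\ref{LEM:FiniteTimeBlowup} is available, since all the gluing and uniform-time bookkeeping is packaged there; your version is more self-contained and makes explicit \emph{why} the coordination hypothesis is what rescues the extension principle (the radius $R$, hence $C^g(R)$ from Lemma~\ref{LEM:Lipschitzg} and the step size $T^\ast$, can be fixed once and reused). Your identification of $T^\ast$ as depending only on $R$, $K$, $\rho$, and your check that $\|u(\cdot,T^\ast)\|_\infty\le\|u_0\|_\infty<R$ re-admits the same $R$ at the next step, are exactly the points that matter; the only place to be slightly more careful is the gluing step, where one should verify that the concatenated function satisfies the integral equation $u(x,t)=u_0(x)+\int_0^t g[u](x,s)\,ds$ across the junction $t=T^\ast$ (done by splitting the time integral there, as in the proof of Lemma~\ref{LEM:FiniteTimeBlowup}), but this is routine.
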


    \begin{proof}
        We modify our proof from Theorem \ref{THM:ShortTimeExistenceAndUniqueness}. Equip the function space $C_b^0(\overline{\Omega}_T)$ with the standard sup norm $\|u\|=\sup_{t\in [0,T]}\|u(\cdot ,t)\|_\infty$. Now let \[E_{T}:=\{u\in C_b^0(\overline{\Omega}_T;\mathbb{R});u(x,0)=u_0,\|u\|\leq \|u_0\|\}\] for some $T$ to be determined later. Observe again that $u(x,t)\equiv u_0(x)$ is in $E_{T}$ so it is non-empty. Also, observe that $E_{T}$ is complete. Consider the same operator $\Theta:C_b^0(\overline{\Omega}_T,\mathbb{R})\rightarrow C_b^0(\overline{\Omega}_T,\mathbb{R})$ as in theorem \ref{THM:ShortTimeExistenceAndUniqueness}. We will show again that $\Theta:E_{T}\rightarrow E_{T}$.

        We again use lemma \ref{LEM:welldefined} to say that $\Theta u$ is clearly continuous and indeed continuously differentiable in time. 
        To say that $\|\Theta u\|\leq \|u_0\|_\infty$ we need only repeat our argument from lemma \ref{LEM:MaximumPrinciple}.

        Suppose there is a time $t_0$ and position $x_0$ where $\Theta u(x_0,t_0)-\epsilon t_0\geq  v:=\|u_0\|+\epsilon$. $\Theta u$ is differentiable in time so, supposing that $t_0$ is the first time this inequality is satisfied we can say that $\frac{\partial}{\partial t}\Theta u(x_0,t_0))-\epsilon\geq 0$. We also have that $\frac{\partial}{\partial t}(\Theta u (x_0,t_0))=g[u](x_0,t_0)$. We already know that $u$ and $\Theta u$ are continuous so $u(x,t_0)\leq v$ for all $x\in \mathbb{R}^n$. Because $\rho'(z)\leq 0$ whenever $z\geq 0$ we know that $\rho'(u(x_0,t_0)-u(y,t_0))\leq 0$ for all $y$ and thus $g[u](x_0,t_0)\leq 0$. Therefore we get a contradiction 
        \begin{equation*}
            0\geq g[u](x_0,t_0)=\frac{\partial}{\partial t}(\Theta u (x_0,t_0))\geq \epsilon >0
        \end{equation*}
        Thus we conclude that $\|\Theta u(\cdot,t)-\epsilon t\|_\infty\leq \|u_0\|+\epsilon$ for all $t\in [0,T)$.  This inequality holds for any epsilon so we have shown that regardless of the choice of $T$, $\|\Theta u\|\leq \|u_0\|_\infty$.
        Therefore we have that $\Theta: E_{T}\rightarrow E_{T}$ 

        Next we must show that $\Theta$ is a contraction on $E_{T}$. The argument here is exactly the same as the argument in theorem \ref{THM:ShortTimeExistenceAndUniqueness}. We will have the Lipschitz constant for $g$, $2L_{\|u_0\|}$ by lemma \ref{LEM:Lipschitzg} so we get that
        
       \[\|\Theta u-\Theta v\|\leq C^gT\|u-v\|.\] We are assured that such a $C^g$ exists because we are working in a compact subset of $C^0_b(\Omega_T)$, namely $E_T$.
        As before we now know that if $T\leq \frac{1}{2C^g}$ then $\Theta$ is a contraction from $E_{T}$ to $E_T$ and therefore there is a unique solution to the IVP.

        Now we want to show that we can extend this solution to any finite time. 
        consider any $u_0\in C^0_b(\Omega)$ and note that we know that a solution exists and is unique on $[0,T)$. Take $u(\cdot, T-\eta)$ for some $\eta>0$ to be our new initial condition and note again that because $\|u(\cdot, T-\eta)\|\leq \|u_0\|$ the original Lipschitz constant for $g$ given the bound from $u_0$, is still an appropriate Lipschitz constant for $g$ given the bound on the new initial data $\|u(\cdot,T-\eta)\|$. Therefore we can prove existence for the same length of time, $T=\frac{1}{2C^g}$, and we have a solution on $[T-\eta, 2T-\eta)$. Moreover, when the solutions overlap they are identical so the solution on $[0,T)$, overlaps perfectly with the solution on $[T-\eta, 2T-\eta)$ and is continuous and differentiable in time, therefore, it is a solution on $[0,2T-\eta)$. We can repeat this process any number of times to show that our solution exists for all finite times. This completes the proof of global existence and uniqueness. 
    \end{proof}

\section{Additional Numerical Result}\label{APP:AdditionalNumericalResults}
    \begin{lemma}[discrete maximum principle (Forward Euler)]\label{LEM:DiscreteMaximumPrinciple}
        If $\Omega_T$ is a bounded time cylinder with discretization $\Omega_T^{(h,\tau)}$, and $w\in \mathcal{V}(\Omega_T^{(h,\tau)})$ which satisfies \eqref{ForwardEuler} with $K(x,y)\in C^0_b(\Omega;C^0_b)$ with $w(\mathbf{x},0)=\pi^h u_0(\mathbf{x})$, then, when $\tau\leq L_\rho\sup_{\mathbf{x}\in \Omega^h}\|\pi^hK(\mathbf{x},\cdot)\|_{\ell^\infty(\Omega)}$ 
       \[\|w\|_{\ell^\infty\left({\Omega^{(h,\tau)}_T}\right)}\leq \|u_0\|_{L^\infty(\Omega)}\]
        where $L_\rho$ is the Lipshitz constant for $\rho'$ on $[-2\|u_0\|_{L^\infty(\Omega)},2\|u_0\|_{L^\infty(\Omega)}]$. 
    \end{lemma}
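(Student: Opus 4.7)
The plan is an induction on the time index $i$. The base case $i=0$ is immediate, since $w(\mathbf{x}, 0) = \pi^h u_0(\mathbf{x})$ gives $\|w(\cdot, 0)\|_{\ell^\infty(\Omega^h)} \leq \|u_0\|_{L^\infty(\Omega)}$. Assuming the inductive hypothesis $\|w(\cdot, t_i)\|_{\ell^\infty(\Omega^h)} \leq \|u_0\|_{L^\infty(\Omega)}$, every pairwise increment $w(\mathbf{x}, t_i) - w(\mathbf{y}, t_i)$ lies in $[-2\|u_0\|_{L^\infty(\Omega)}, 2\|u_0\|_{L^\infty(\Omega)}]$, on which $\rho'$ has Lipschitz constant $L_\rho$; this is the interval on which the $L_\rho$ in the statement is defined, and it is crucial that the same $L_\rho$ works throughout the induction.

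The key step is a discrete analogue of the sign argument used in Lemma~\ref{LEM:MaximumPrinciple}. Since $\rho'(0) = 0$, $\rho'$ is $L_\rho$-Lipschitz, and the coordination assumption \eqref{coordinationcondition} forces the sign of $\rho'$ to be opposite that of its argument, I would factor
\[
\rho'(w(\mathbf{x}, t_i) - w(\mathbf{y}, t_i)) = \beta_{\mathbf{x}, \mathbf{y}}^{(i)} \bigl(w(\mathbf{x}, t_i) - w(\mathbf{y}, t_i)\bigr),
\]
with $\beta_{\mathbf{x}, \mathbf{y}}^{(i)} \in [-L_\rho, 0]$: the lower bound is Lipschitz continuity, the upper bound is the coordination condition. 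Substituting this into \eqref{ForwardEuler} and collecting the coefficient of $w(\mathbf{x}, t_i)$ separately yields
\[
w(\mathbf{x}, t_{i+1}) = \bigl(1 - \Gamma_{\mathbf{x}}^{(i)}\bigr)\, w(\mathbf{x}, t_i) + \sum_{\mathbf{y} \in {}^-\Omega^h} \gamma_{\mathbf{x}, \mathbf{y}}^{(i)}\, w(\mathbf{y}, t_i),
\]
where $\gamma_{\mathbf{x}, \mathbf{y}}^{(i)} := -\tau\, K(\mathbf{x}, \mathbf{y}) \beta_{\mathbf{x}, \mathbf{y}}^{(i)} h^n$ is nonnegative (since $K \geq 0$ and $\beta_{\mathbf{x},\mathbf{y}}^{(i)} \leq 0$) and $\Gamma_{\mathbf{x}}^{(i)} := \sum_{\mathbf{y}} \gamma_{\mathbf{x}, \mathbf{y}}^{(i)}$. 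The total weight on the right is $(1 - \Gamma_{\mathbf{x}}^{(i)}) + \Gamma_{\mathbf{x}}^{(i)} = 1$.

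The hard part is precisely the CFL-type constraint on $\tau$ required to ensure $1 - \Gamma_{\mathbf{x}}^{(i)} \geq 0$ uniformly in $\mathbf{x}$ and $i$, making the expression above an honest convex combination. Using $|\beta_{\mathbf{x}, \mathbf{y}}^{(i)}| \leq L_\rho$, one has $\Gamma_{\mathbf{x}}^{(i)} \leq \tau L_\rho \sum_{\mathbf{y}} K(\mathbf{x}, \mathbf{y}) h^n$, and on the unit cube this discrete $L^1$-mass is controlled by $\sup_{\mathbf{x}} \|\pi^h K(\mathbf{x}, \cdot)\|_{\ell^\infty(\Omega^h)}$; the bound stated in the hypothesis ensures this is at most $1$. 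Convexity then gives $|w(\mathbf{x}, t_{i+1})| \leq \max_{\mathbf{z} \in \Omega^h} |w(\mathbf{z}, t_i)| \leq \|u_0\|_{L^\infty(\Omega)}$, which closes the induction and yields the claimed uniform bound $\|w\|_{\ell^\infty(\Omega_T^{(h,\tau)})} \leq \|u_0\|_{L^\infty(\Omega)}$.
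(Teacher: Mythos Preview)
Your argument is correct and is in fact the cleaner of the two. The paper proceeds by a direct case analysis: assuming $\|w(\cdot,t_i)\|_{\ell^\infty}=M$, it fixes $\mathbf{x}$ and separately treats the case $w(\mathbf{x},t_i)=M$ (where every summand is nonpositive by the coordination sign condition) and the case $w(\mathbf{x},t_i)<M$ (where the Lipschitz bound $\rho'(z)\le L_\rho(M-w(\mathbf{x},t_i))$ in the worst case gives an increment $\le \tau L_\rho D\,(M-w(\mathbf{x},t_i))$, which stays below $M-w(\mathbf{x},t_i)$ under the CFL restriction). Your route instead factors $\rho'(z)=\beta z$ with $\beta\in[-L_\rho,0]$ and recasts the scheme as a convex combination of the old values, which is the standard monotone-scheme formulation. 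The ingredients are identical (coordination sign, Lipschitz bound on $\rho'$, CFL condition $\tau L_\rho D\le 1$), but your packaging gives both the upper and lower bounds at once via $|w(\mathbf{x},t_{i+1})|\le\max_{\mathbf{z}}|w(\mathbf{z},t_i)|$, whereas the paper's case analysis only handles the upper bound explicitly and leaves the symmetric lower bound implicit.
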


    \begin{proof}
        Suppose that $\|w(\cdot, t_i)\|_{\ell^\infty}= M$. We seek to show that $w(\mathbf{x},t_{i+1})\leq M$ for all $\mathbf{x}\in \Omega^h$.  Consider $\mathbf{x}\in \Omega^h$, we will break the proof of this claim into two cases, the first if $w(\mathbf{x},t_i)=M$ and the second, $w(\mathbf{x},t_i)<M$. 

        In the first case, observe that, because $w(\mathbf{x},t_i)-w(\mathbf{y},t_i)\geq 0$ for all $\mathbf{y}\in ^-\Omega^h$, we have that $\rho'(w(\mathbf{x},t_i)-w(\mathbf{y},t_i))\leq 0$ for all $\mathbf{y}\in ^-\Omega^h$. It is clear then that
        
       \[w(\mathbf{x},t_{i+1})=M+\tau h^n\sum_{\mathbf{y}\in ^-\Omega^h}K(\mathbf{x},\mathbf{y})\rho'(u(\mathbf{x},t_i)-u(\mathbf{y},t_i))\leq M\]

        Now we consider the case that $w(\mathbf{x},t_{i})<M$. If we show that 
        \begin{equation}\label{DMPcase2} \tau h^n\sum_{\mathbf{y}\in ^-\Omega^h}K_\epsilon(\mathbf{x}-\mathbf{y})\rho'(u(\mathbf{x},t_i)-u(\mathbf{y},t_i))\leq M - w(\mathbf{x},t_i)\end{equation}
        then surely $w(\mathbf{x},t_{i+1})\leq M$. We can show this by observing that $\rho'$ is Lipschitz with Lipschitz constant $L_\rho$ on the range of $w$ and that, $w(\mathbf{x},t_i)-w(\mathbf{y},t_i)\geq w(\mathbf{x},t_i)-M.$ Because of the assumption that $\rho'(z)\geq 0$ when $z<0$ and $\rho'(z)\leq 0$ when $z>0$, in order to show the upper bound of the sum in \eqref{DMPcase2} we assume the worst case which is that $w(\mathbf{x},t_i)-w(\mathbf{y},t_i))\leq 0$ for all $\mathbf{y}\in ^-\Omega^h$. In this worst case we know that
        
       \[0\leq \rho'(w(\mathbf{x},t_i)-w(\mathbf{y},t_i))\leq L_\rho(M-w(\mathbf{x},t_i))\] for each $\mathbf{y}\in ^-\Omega^h$. Thus we have that
        \begin{equation*}
        \begin{split}
            \tau h^n\sum_{\mathbf{y}\in ^-\Omega^h}&K(\mathbf{x},\mathbf{y})\rho'(u(\mathbf{x},t_i)-u(\mathbf{y},t_i))\\
            &\leq \tau h^nL_\rho(M-w(\mathbf{x},t_i))\sum_{\mathbf{y}\in ^-\Omega^h}K(\mathbf{x},\mathbf{y})\\
            &\leq \tau L_\rho D(M-w(\mathbf{x},t_i))
            \end{split}
        \end{equation*}
        Where $D=\sup_{\mathbf{x}\in \Omega^h}\|\pi^hK(\mathbf{x},\cdot)\|_{\ell^\infty(\Omega)}$. Therefore, when $\tau<\frac{1}{L_\rho D}$, inequality \eqref{DMPcase2} is satisfied, so we have proved that, whenever $\tau <\frac{1}{L\rho D}$, $\|w(\cdot, t_{i+1})\|_{\ell^\infty(\Omega^h)}\leq \|w(\cdot, t_{i})\|_{\ell^\infty(\Omega^h)}$. The desired result is an obvious consequence. 
        
    \end{proof}

\section{Additional Inhomogeneous Result}\label{APP:AdditionalInhomogeneousResults}
\begin{theorem}[Short time existence and uniqueness for the inhomogeneous problem]The initial value problem $u_t-g[u]=f(u,x,t)$ has a unique continuous and bounded solution in $\Omega_T$ for some $T$ when $u(x,0)=u_0\in C_b^0(\Omega), \rho\in C^{1,1}(\mathbb{R})$, $K\in C^0_b(\Omega;L^1(\Omega))$ and $f:\mathbb{R}\times \Omega\times \mathbb{R}\to \mathbb{R}$ is Lipschitz continuous with respect the the first variable with a Lipschitz constant, $L_f$, which does not depend on time, is merely continuous in space and time, and is bounded. 
\end{theorem}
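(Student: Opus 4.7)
The plan is to mirror the proof of Theorem \ref{THM:ShortTimeExistenceAndUniqueness} by setting up a Picard-type integral operator and applying the Banach Fixed Point Theorem, with the inhomogeneous term $f$ handled through its boundedness and Lipschitz property. Observe first that any classical solution satisfies
\[
u(x,t) = u_0(x) + \int_0^t \bigl( g[u](x,s) + f(u(x,s),x,s) \bigr)\, ds,
\]
so I define $\Theta u := u_0 + \int_0^t (g[u] + f(u(\cdot,s),\cdot,s))\, ds$, and my goal becomes showing that $\Theta$ has a unique fixed point in the closed ball $E_{R,T} := \{u \in C^0_b(\Omega_T);\, u(\cdot,0)=u_0,\, \|u\|\leq R\}$ for some $R>\|u_0\|_\infty$ and sufficiently small $T$.

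First I would verify that $\Theta$ maps $E_{R,T}$ into itself. Continuity of $\Theta u$ follows from Lemma \ref{LEM:welldefined} applied to $g[u]$, together with the continuity of $(x,t)\mapsto f(u(x,t),x,t)$, which holds because $u$ is continuous and $f$ is jointly continuous in its three arguments. For boundedness, let $M_f := \sup|f|$ and let $W$ be the bound on $\rho'$ on $[-2R, 2R]$; then from the proof of Lemma \ref{LEM:welldefined} we have $\|g[u](\cdot,s)\|_\infty \leq WD$, and therefore
\[
\|\Theta u(\cdot, t)\|_\infty \leq \|u_0\|_\infty + t(WD + M_f).
\]
Choosing $T \leq T_R := (R - \|u_0\|_\infty)/(WD + M_f)$ guarantees $\Theta u \in E_{R,T}$.

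Next I would establish the contraction estimate. For $u, v \in E_{R,T}$, I split $\Theta u - \Theta v$ into a nonlocal piece and an inhomogeneous piece. Lemma \ref{LEM:Lipschitzg} directly yields $\|g[u](\cdot,s) - g[v](\cdot,s)\|_\infty \leq C^g \|u(\cdot,s) - v(\cdot,s)\|_\infty$. The hypothesis that $f$ is Lipschitz in its first variable with constant $L_f$ (uniform in $(x,t)$) gives
\[
| f(u(x,s),x,s) - f(v(x,s),x,s) | \leq L_f |u(x,s) - v(x,s)|.
\]
Combining these and estimating under the sup norm produces $\|\Theta u - \Theta v\| \leq T(C^g + L_f)\|u - v\|$. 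Taking $T \leq \min\{T_R, 1/(2(C^g + L_f))\}$ makes $\Theta$ a strict contraction on the complete metric space $E_{R,T}$, and the Banach Fixed Point Theorem delivers the unique fixed point, which is the desired short-time solution.

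There is no genuine obstacle here beyond bookkeeping; the real analytic content already sits in Lemmas \ref{LEM:welldefined} and \ref{LEM:Lipschitzg}. The only mildly delicate point is ensuring that the substitution $s \mapsto f(u(\cdot,s),\cdot,s)$ preserves continuity and boundedness, which is why we explicitly require $f$ to be jointly continuous and uniformly bounded rather than merely measurable, and why we need the Lipschitz constant $L_f$ to be uniform in time so that it can be pulled out of the time integral cleanly when forming the contraction bound.
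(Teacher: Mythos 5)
Your proposal is correct and follows essentially the same route as the paper's own proof: the same integral operator on the same closed ball $E_{R,T}$, invariance via the uniform bounds $WD$ and $\sup|f|$, and a contraction constant $T(C^g+L_f)$ leading to the Banach Fixed Point Theorem. The only difference is cosmetic bookkeeping in how the existence time $T_R$ is extracted from the two bounds.
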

\begin{proof}
    Let $\Omega_T=\Omega\times [0,T]$ with $T$ to be chosen later. Equip the function space $C^0_b(\Omega_T)$ with the standard sup norm $\|u\|=\sup_{t\in [0,t]}\|u(\cdot, t)\|_\infty.$ Now for some $R>\|u_0\|_\infty$ let $E_{R,T}:=\{u\in C^0_b(\Omega_T,\mathbb{R});u(x,0)=u_0,\|u\|\leq R\}$ and observe that $u(x,t)=u_0(x)$ for all $t$ is in this closed subset of the Banach space. As before we will use the BFPT. Clearly a solution to the IVP will satisfy
    \begin{equation}\label{inhomIntegralEquation}
        u(x,t)=u_0(x)+\int_0^tg[u](x,s)+f(u(s),x,s) ds
    \end{equation}
    Let $\Psi:C^0_b(\Omega_T,\mathbb{R})\to C^0_b(\Omega_T,\mathbb{R})$ where $\Psi u = u_0+\int_0^tg[u](x,s)+f(u(s),x,s)ds$. Clearly if there is a solution to $\Psi u=u$ then we have a solution to \eqref{inhomIntegralEquation} and thus the IVP.

    We start by showing that $\Psi:E_{R,T}\to E_{R,T}$. By lemma \ref{LEM:welldefined} we know that $g[u]$ is continuous so its time antiderivative is clearly continuous. Moreover, by assumption $f$ is continuous so its time antiderivative will also be continuous.

    To show that $\|\Psi u\|\leq R$  we note that both $g[u]$ and $f$ are bounded. $g$ is bounded exactly as described in theorem \ref{THM:ShortTimeExistenceAndUniqueness} so there is a $T_R$ before which 
    \[\sup_{t\in[0,T)}\|\int_0^tg[u](\cdot,s)ds\|_\infty \leq \frac{R-\|u_0\|_\infty}{2}.\] 
    $f(u(x,s),x,s)$ is bounded, by assumption, so likewise there is a $T_{Rf}$ so that \[\int_0^tf(u(x,s),x,s)ds<\frac{R-\|u_0\|_\infty}{2}\] when $t<T_{Rf}$. Thus there is a $T_1$ so that when $T<T_1$, $\Psi:E_{R,T}\to E_{R,T}$. 

    To show that this is a contraction consider 
    \begin{equation*}
        \begin{split}
            \|\Psi u-\Psi v\|&=\sup_{t\in[0,T)}\bigg\|\int_0^tg[u](\cdot,s)-g[v](\cdot,s)ds+\int_0^tf(u(\cdot,s),\cdot,s)-f(v(\cdot,s),\cdot,s)ds\bigg\|_\infty\\
            &\leq \int_0^T\|g[u](\cdot,s)-g[v](\cdot,s)\|_\infty ds+\int_0^T|f(u(\cdot,s),\cdot,s)-f(v(\cdot,s),\cdot,s)|ds\\
            &\leq TC^g\|u-v\|+L_fT\|u-v\|
        \end{split}
    \end{equation*}
    Thus if $T<\frac{1}{2(c^g+L_f)}$ then $\|\Psi u -\Psi v\|\leq \frac{1}{2}\|u-v\|$. 

    Therefore when $T<\min\{T_1,\frac{1}{2(c^g+L_f)}\}$ then $\Psi:E_{R,T}\to E_{R,T}$ is a contraction and thus it has a unique fixed point. Thus we have shown that there is a unique solution to the integral equation \eqref{inhomIntegralEquation} and thus the IVP for some short time. We can extend this in the same way as in \ref{THM:ShortTimeExistenceAndUniqueness} but can not extend this to global existence without a maximum principle.
    \end{proof}

% If in two-column mode, this environment will change to single-column
% format so that long equations can be displayed. Use
% sparingly.
%\begin{widetext}
% put long equation here
%\end{widetext}

% Create the reference section using BibTeX:
\bibliography{ContinuousCoordination}

\end{document}